\theoremstyle{plain}
\newtheorem{theorem}{Theorem}[section]
\newtheorem{lemma}[theorem]{Lemma}
\newtheorem{corollary}[theorem]{Corollary}
\newtheorem{definition}[theorem]{Definition}
\newtheorem{example}[theorem]{Example}
\numberwithin{equation}{section}
\newcommand{\eps}{\varepsilon}
\DeclareMathOperator{\Var}{Var}
\newcommand{\Ex}{\mathbb{E}}
\begin{document}
	
	\title{Super-logarithmic cliques in dense inhomogeneous random graphs}
	\date{\today}
	\author{Gweneth McKinley}
	\maketitle
	\begin{abstract}
		In the theory of dense graph limits, a \emph{graphon} is a symmetric measurable function $W\colon[0,1]^2\to [0,1]$. Each graphon gives rise naturally to a random graph distribution, denoted $\mathbb{G}(n,W)$, that can be viewed as a generalization of the Erd\H{o}s-R\'enyi random graph.	Recently, Dole{\v{z}}al, Hladk{\'y}, and M{\'a}th{\'e} gave an asymptotic formula of order $\log n$ for the clique number of $\mathbb{G}(n,W)$ when $W$ is bounded away from 0 and 1. We show that if $W$ is allowed to approach 1 at a finite number of points, and displays a moderate rate of growth near these points, then the clique number of $\mathbb{G}(n,W)$ will be $\Theta(\sqrt{n})$ almost surely. We also give a family of examples with clique number $\Theta(n^\alpha)$ for any $\alpha\in(0,1)$, and some conditions under which the clique number of $\mathbb{G}(n,W)$ will be $o(\sqrt{n})$, $\omega(\sqrt{n}),$ or $\Omega(n^\alpha)$ for $\alpha\in(0,1)$.
	\end{abstract}
	
	\section{Introduction}
	
	The Erd\H{o}s-R\'enyi random graph $G_{n,p}$ is a graph on $n$ vertices where an edge is placed independently with probability $p$ between each pair of vertices. Since its introduction in 1959 by Gilbert \cite{Gi59} and by Erd\H{o}s and R\'enyi \cite{ErRe59}, it has become one of the fundamental objects of study in probabilistic combinatorics, and a wide variety of its properties are well understood. One of the most basic parameters of any graph $G$ is the \emph{clique number} $\omega(G)$, the number of vertices in the largest complete subgraph of $G$. It was shown independently by Grimmett and McDiarmid in 1975 \cite{GrMc75} and Matula in 1976 \cite{Ma76} that for a fixed $p\in (0,1)$, the clique number $\omega(G_{n,p})$ of $G_{n,p}$ satisfies
	\begin{equation}\label{eqn:ERCliqueNum}
	\omega(G_{n,p}) = (1+o(1))\cdot\frac{2\log n}{\log(1/p)}
	\end{equation}
	\noindent with probability $1-o(1)$ as $n$ approaches infinity. This can be proved roughly as follows: we obtain an upper bound on $\omega(G_{n,p})$ by finding $k$ such that the expected number of $k$-cliques in $G_{n,p}$ is asymptotically zero (the first moment method). Then, to prove a matching lower bound, we show that for an appropriate, slightly smaller $k$, the number of $k$-cliques in $G_{n,p}$ approaches infinity in the limit and has low variance. This implies that the number of cliques of size $k$ is highly concentrated around its expectation, and will be positive with high probability (the second moment method). Some variation on this method has been a standard technique for computing clique number in other random graph models as well. (See \cite{GrMc75}, \cite{Ma76}, \cite{DGLU11}, \cite{DHM17}, and \cite{BCV18}.) 
	
	The Erd\H{o}s-R\'enyi random graph $G_{n,p}$ may be considered ``homogeneous" in the following sense: between every pair of vertices, an edge is assigned independently with the \emph{same} probability $p$. In recent years, interest has been developing in studying inhomogeneous random graphs; in this model, edges are assigned between some pairs with higher or lower probabilities. This is both a better model of many real-world phenomena and an object of independent mathematical interest. However, with this greater flexibility comes greater difficulty in analysis. In this paper, we will characterize the clique numbers of a variety of inhomogeneous random graphs that arise from the theory of (dense) graphons. 
	
	A \emph{graphon} $W$ is defined as a symmetric, measurable function from $\Omega^2$ to $[0,1]$, where $\Omega$ is a probability space. To obtain a random graph from $W$, we sample $n$ points $x_1,\dots, x_n$ independently according to the probability distribution on $\Omega$, and connect vertices $i$ and $j$ by an edge with probability $W(x_i,x_j)$, independently for each pair $(i,j)$. (In this paper, for the sake of brevity, we will often identify the vertex $i$ with the value $x_i$, and speak of ``sampling vertices" from $\Omega$). We denote this graph by $\mathbb{G}(n,W)$, and refer to it as a ``$W$-random graph". Notice that in the case where $W$ is equal to the constant function $p$, we simply have $\mathbb{G}(n,W) = G_{n,p}$. One of the main results in the theory of graphons, proved by Lov{\'a}sz and Szegedy in 2006 in \cite{LoSz06}, is that every infinite sequence of graphs contains a subsequence converging to some graphon $W$ (in what is called the cut norm), and moreover, that every graphon can be achieved in this way, as the limit of some sequence of graphs. It is therefore reasonable to think of graphons as the correct limiting objects for sequences of graphs that are Cauchy sequences in an appropriate metric. See \cite{Lo12} for a detailed survey of the theory of graphons.

	It should be noted that we must take some care in defining a notion of clique number for graphons. We might hope that all sequences of graphs converging to a given graphon would have the same clique number asymptotically; however, as noted in \cite{DHM17}, this is not the case. Consider as an example the following two sequences of graphs.\\
	
	\noindent\begin{minipage}{1\linewidth}
		\begin{example} \mbox{}
			\begin{itemize}
				\item $G_n$ consists of a clique on $\sqrt{n}$ vertices, and $n-\sqrt{n}$ isolated vertices.
				\item $H_n$ consists of $n$ isolated vertices.\\
			\end{itemize}
		\end{example}
	\end{minipage}
	\indent Both sequences approach the zero graphon, as the density of edges approaches zero in both cases. However $\omega(G_1) = \sqrt{n}$, while $\omega(H_1) = 0$. Thus, instead of looking at all sequences of graphs converging to a given graphon $W$, we will consider only ``typical" sequences, sampled according to the distribution $\mathbb{G}(n,W)$. (Note: an alternate notion of clique number for a graphon is presented in \cite{HlRo17}.)
	
	This was the question considered by Dole{\v{z}}al, Hladk{\'y}, and M{\'a}th{\'e} in \cite{DHM17}, where they obtained a partial characterization of the clique number of $\mathbb{G}(n,W)$ for graphons $W$. They proved the following result. (In the statement below, by ``essentially bounded", we mean that the given bound holds everywhere except perhaps on some set of measure zero.)
	
	\begin{theorem}[Dole{\v{z}}al, Hladk{\'y}, and M{\'a}th{\'e} {\cite[Cor. 2.8]{DHM17}}]\label{thm:DolezalCliqueNum}
		For a graphon $W\colon\Omega^2\to [0,1]$ that is essentially bounded away from 0 and 1, 
		\[
		\omega(\mathbb{G}(n,W)) = (1+o(1))\kappa(W)\log n,
		\]
		asymptotically almost surely, where
		\[
		\resizebox{1\hsize}{!}{$\kappa(W) = \sup\left\{ \frac{2\| h\|_1^2}{\int_{(x,y)\in\Omega^2}h(x)h(y)\log(1/W(x,y))\,d(\nu^2)}
			\, :\, \text{h is a nonnegative }L^1\text{-function on }\Omega
			\right\}.$}
		\]
	\end{theorem}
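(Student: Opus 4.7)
The plan is to adapt the standard first/second moment proof for $G_{n,p}$ to the inhomogeneous setting. The function $h$ in the expression for $\kappa(W)$ should be interpreted (after normalizing so $\|h\|_1 = 1$) as the target density, with respect to $\nu$, of the empirical vertex distribution of a near-optimal clique. Heuristically, if $\Omega = A_1 \sqcup \cdots \sqcup A_r$ is a partition fine enough that $W \approx w_{ij}$ on $A_i \times A_j$, and we ask for a $k$-clique with $k_i = k g_i$ vertices in each $A_i$, then
\[
\Ex[\#\text{ such cliques}]\;\approx\;\prod_i \binom{n\,\nu(A_i)}{k_i}\, \exp\!\Bigl(-\tfrac{k^2}{2}\sum_{i,j} g_i g_j \log(1/w_{ij})\Bigr),
\]
whose logarithm for $k = c\log n$ equals $\bigl(c - \tfrac{c^2}{2} I_g\bigr)\log^2 n + O(\log n \log\log n)$ by Stirling, where $I_g := \int g(x)g(y)\log(1/W(x,y))\,d\nu^2$. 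This is positive precisely when $c < 2/I_g$, whose supremum over densities $g$ is exactly $\kappa(W)$.

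For the upper bound, fix $\eps > 0$ and let $k = \lceil (1+\eps)\kappa(W)\log n\rceil$. Since $W$ is essentially bounded away from $0$, $\log(1/W)$ is bounded, and we may approximate $W$ from below by a step function on a finite partition. Decompose $\Ex[N_k]$ as a sum over type vectors $(k_1,\ldots,k_r)$, bounding each term by the heuristic above. The defining supremum of $\kappa(W)$ gives the uniform inequality $I_g \ge 2/\kappa(W)$ for every density $g$, so each term is at most $\exp\bigl(-\tfrac{\eps}{1+\eps}\,k^2/\kappa(W) + o(\log^2 n)\bigr)$. Summing over the $\mathrm{poly}(k)$-many type vectors still yields $\Ex[N_k] = o(1)$, so Markov's inequality gives $\omega(\mathbb{G}(n,W)) < k$ asymptotically almost surely.

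For the lower bound, fix $\eps > 0$ and choose a nonnegative $L^1$ function $h$ with $\|h\|_1 = 1$ and $I_h < 2/((1-\eps)\kappa(W))$; by simple-function approximation (valid since $\log(1/W)$ is bounded), we may take $h$ constant on the blocks $A_i$. Set $k = \lfloor (1-2\eps)\kappa(W)\log n\rfloor$, and let $N^*$ count $k$-cliques with exactly $k_i \approx k \int_{A_i} h\,d\nu$ vertices in $A_i$ for each $i$. The heuristic gives $\log \Ex[N^*] = \Omega(\log^2 n)$. For the second moment, group ordered pairs of cliques $(S,S')$ by $\ell = |S \cap S'|$:
\[
\Ex[(N^*)^2] = \sum_{\ell = 0}^{k} \sum_{|S \cap S'| = \ell}\Pr[S,\,S'\text{ both cliques of the desired type}].
\]
The $\ell = 0$ contribution is $(1+o(1))\Ex[N^*]^2$; the main obstacle is showing that the remaining $\ell \ge 1$ terms contribute $o(\Ex[N^*]^2)$. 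Here the hypothesis that $W$ is essentially bounded away from $1$ enters crucially: it provides a uniform upper bound on the $\binom{\ell}{2}$ ``double-counted'' edge factors, which must be made to dominate the combinatorial savings from specifying the overlap and its type profile. Once this second-moment bound is in hand, Chebyshev's inequality yields $N^* > 0$ a.a.s., so $\omega(\mathbb{G}(n,W)) \ge k$, completing the proof.
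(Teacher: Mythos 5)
This statement is not proved in the paper at all---it is quoted from Dole\v{z}al, Hladk\'y, and M\'ath\'e \cite[Cor.~2.8]{DHM17}---and the paper itself records the salient feature of the original proof: the second moment method is \emph{not} applied directly to a clique count in $\mathbb{G}(n,W)$, but to a carefully selected restriction of the graphon, with the conclusion transferred back to $\omega(\mathbb{G}(n,W))$ by a somewhat complex argument. Your outline is the direct route, and the one place where essentially all of the difficulty lives is precisely the step you leave open: showing that the $\ell\ge 1$ overlap terms in $\mathbb{E}[(N^*)^2]$ contribute $o(\mathbb{E}[N^*]^2)$. Saying that essential boundedness away from $1$ ``provides a uniform upper bound on the double-counted edge factors, which must be made to dominate the combinatorial savings'' names the obstacle but does not overcome it: the overlap between two cliques can redistribute itself onto the sub-blocks where $\log(1/w_{ij})$ is smallest and where the near-optimal profile places many vertices, so one must check that no intermediate overlap size and overlap type profile dominates; moreover the randomness of the vertex sample (the multinomial block counts) correlates the events ``$S$ has the desired type'' and ``$S'$ has the desired type'' across pairs, which the heuristic product formula ignores. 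The appendix of this paper shows how badly a direct second moment can fail for graphons with favorable spots; here the bounded-away-from-$1$ hypothesis is exactly what is supposed to rescue the computation, but the decisive estimate is asserted rather than proved, and the fact that the original authors avoided the direct argument is evidence that it is not a routine verification.

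There are also two directional/approximation gaps. In the upper bound you propose to approximate $W$ \emph{from below} by a step function; that is the wrong direction, since replacing $W$ by a pointwise smaller graphon only decreases $\mathbb{E}[N_k]$ and therefore cannot certify $\mathbb{E}[N_k]=o(1)$ for $W$ itself. You would need an approximation from above (or a direct argument comparing $\sum_{i<j}\log(1/W(x_i,x_j))$ for the empirical measure of the sampled points with the functional $\int h(x)h(y)\log(1/W)\,d\nu^2$, handling the excluded diagonal terms), together with a stability statement saying that $\kappa$ of the approximant converges to $\kappa(W)$; this uses both hypotheses $0<p_1\le W\le p_2<1$ and is itself a lemma that needs proof. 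Symmetrically, for the lower bound you need a step graphon $W'\le W$ with $\kappa(W')\ge(1-\eps)\kappa(W)$; since the supremum defining $\kappa$ need not be attained, one must first pass to a near-optimal bounded $h$ and verify that the functional is stable under $L^1$-approximation of both $h$ and $W$---doable under the stated hypotheses, but missing here. As written, the proposal is a reasonable high-level plan whose key second-moment estimate and approximation lemmas, i.e.\ the actual content of the theorem, are not supplied.
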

	Notice that, for a graphon $W$ essentially bounded between $p_1>0$ and $p_2<1$, we can couple $\mathbb{G}(n,W)$ with the Erd\H os-R\'enyi random graphs $G_{n, p_1}$ and $G_{n, p_2}$ so that $\omega(G_{n, p_1}) \leq \omega(\mathbb{G}(n,W))\leq \omega(G_{n, p_1})$. Since the clique number of $G_{n,p}$ is $\Theta(\log n)$ for any value of $p$, this immediately tells us that the clique number of $\mathbb{G}(n,W)$ is also $\Theta(\log n)$ with probability approaching 1. Thus the key part of the result above is the characterization of the constant $\kappa(W)$ in $\Theta(\log n)$.
	
	A similar question was considered by Bogerd, Castro, and van der Hofstad in \cite{BCV18}; they studied clique number for rank-1 inhomogeneous random graphs, in which a graph is formed by assigning a weight to each vertex according to some distribution, and then connecting each pair of vertices independently with a probability proportional to the product of their weights. They showed that, if all vertex weights are bounded away from 1 (analogous to the assumption in Theorem~\ref{thm:DolezalCliqueNum} that $W$ is essentially bounded away from 1), then the clique number of such a graph is concentrated on at most two consecutive integers, for which they gave explicit expressions. This was proved in both the dense case and the sparse case, in which the edge density approaches zero as the number of vertices grows. It should be noted that a great deal of the work on inhomogeneous random graph models has centered on the sparse case, which gives a more accurate model for a variety of real-world networks, and it would  be interesting to see more results in this direction.  (See \cite{BJR07} for one of the seminal sparse models, and \cite{Va16} and \cite{Va18} for a survey of other recent work.) Results have also been obtained for clique number in random graphs with a power-law distribution \cite{JLN10} and hyperbolic random graphs \cite{BFK18}.
	
	Here, however, we will explore in a different (and in some sense, even opposite) direction. Namely, for graphons $W$ that are not bounded away from 1, even the rough order of growth of $\omega(\mathbb{G}(n,W))$ is not apparent (we could think of this as producing a $W$-random graph with potentially very dense spots); for this reason, it is interesting to ask what may happen if $W$ is allowed to approach 1. (Note, however, that if $W=1$ on $S\times S$ for some set $S$ of positive measure, then $W$ will have linear clique number, as the subset of vertices sampled from $S$ will all be connected with probability 1.) Additionally, although the restriction to graphons essentially bounded away from 1 given in \cite{DHM17} is a natural condition that precludes a variety of pathological examples, there is no reason to suppose that any particular graphon that might arise in an applied setting would necessarily satisfy it. It is still necessary, however, to impose some restrictions on the behavior of $W$ in order to obtain a good characterization of $\omega(\mathbb{G}(n,W))$; the authors of \cite{DHM17} also showed that for an arbitrary graphon $W$ not bounded away from 1, $\omega(\mathbb{G}(n,W))$ may behave quite wildly as $n\to\infty$.
	\begin{example}[Dole{\v{z}}al, Hladk{\'y}, and M{\'a}th{\'e} {\cite[Prop. 2.1]{DHM17}}]
		There exists a graphon $W$ and a sequence of integers $n_1<n_2<\cdots$ such that, asymptotically almost surely, $\omega(\mathbb{G}(n_i,W))$ alternates between at most $\log\log n_i$ and at least $\frac{n_i}{\log\log n_i}$ on elements of the sequence.
	\end{example}
	In fact, we may take any $\omega(1)$ function in place of $\log\log n$ in the example above. This behavior is shown in \cite{DHM17} to be achieved by a highly discontinuous graphon \mbox{$W\colon[0,1]^2\to [0,1]$}, which raises the question: even if $W$ is not bounded away from $1$, can we obtain a good characterization of $\omega(\mathbb{G}(n,W))$ as long as $W$ is reasonably well-behaved?	This is the central question of this paper. In order to characterize a graphon as well-behaved, we would like to have some notion of continuity, smoothness, etc. With this in mind, for the majority of this paper, we will restrict ourselves to graphons on $[0,1]^2$, as opposed to $\Omega^2$ for a more general probability space $\Omega$. Although in some applications, it may be more appropriate to work in a more general space $\Omega$, it is unclear what level of generality would allow us as much freedom as possible in choosing the space $\Omega$ while still capturing appropriate notions of ``well-behavedness". Thus, for brevity and clarity of analysis, we will limit ourselves to $[0,1]^2$, which serves as a good illustrative case of all the ideas here.  We also note that, for a graphon $W\colon[0,1]^2\to[0,1]$, among points with $W(x,y) = 1$, we are primarily concerned with those points along the line $x=y$, as shown by the following lemma.
	
	\begin{restatable}{lemma}{DiagonalLemma}
		\label{lem:TheImportanceOfBeingDiagonal}
		Let $W\colon[0,1]^2\to[0,1]$ be a graphon whose essential supremum is strictly less than 1 in some neighborhood of each point $(x,x)$ for $x\in[0,1]$. Then $\omega(\mathbb{G}(n,W)) = O(\log n)$ asymptotically almost surely.
	\end{restatable}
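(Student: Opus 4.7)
The plan is to leverage compactness of the diagonal $\{(x,x):x\in[0,1]\}$ to reduce the problem to the Erd\H{o}s--R\'enyi setting, where the $O(\log n)$ bound on clique number is classical. The hypothesis gives that $W$ is locally bounded away from $1$ near every diagonal point, so a finite subcover will let us combine a pigeonhole argument with a stochastic coupling to a single Erd\H{o}s--R\'enyi random graph.

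First, for each $x\in[0,1]$, I would invoke the hypothesis to produce an open interval $I_x\ni x$ with $\operatorname{ess\,sup}_{I_x\times I_x}W<1$; any open neighborhood of $(x,x)$ in $[0,1]^2$ contains a product of open intervals around $x$, so this is immediate. Since $[0,1]$ is compact, finitely many $I_{x_1},\dots,I_{x_m}$ cover it, and we set $p:=\max_{1\le j\le m}\operatorname{ess\,sup}_{I_{x_j}\times I_{x_j}}W<1$, so that $W\le p$ almost everywhere on each $I_{x_j}\times I_{x_j}$.

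Next, given the sample $x_1,\dots,x_n$, let $V_j=\{i:x_i\in I_{x_j}\}$, so that $[n]=\bigcup_j V_j$. For any clique $K$ of $\mathbb{G}(n,W)$, pigeonhole yields some $j$ with $|K\cap V_j|\ge |K|/m$, and $K\cap V_j$ is a clique in the induced subgraph on $V_j$. Conditional on the samples, edges within $V_j$ are placed independently with probabilities at most $p$ (this holds a.s., since independent uniform samples almost surely avoid a fixed null set of $[0,1]^2$), so the induced subgraph is stochastically dominated by $G_{|V_j|,p}$. The classical Grimmett--McDiarmid/Matula bound \eqref{eqn:ERCliqueNum} gives $\omega(G_{n,p})\le C\log n$ a.a.s.\ for some constant $C=C(p)$, and hence $|K|\le mC\log n$ a.a.s.; taking $K$ to be a maximum clique yields the lemma.

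The reduction is essentially soft, so I do not anticipate a serious obstacle; the only mild subtlety is that our bound $W\le p$ holds only almost everywhere, but this causes no trouble because the independent Lebesgue-sampled pairs almost surely avoid any fixed null set in $[0,1]^2$, and we only have to apply this observation to finitely many null sets.
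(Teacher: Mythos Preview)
Your proposal is correct and follows essentially the same structure as the paper's proof: use compactness of the diagonal to extract a uniform bound $p<1$ on $W$ over finitely many diagonal blocks, then apply pigeonhole to force a large subclique into a single block where $W\le p$. The only difference is in the last step: you invoke stochastic domination by $G_{n,p}$ and cite the classical Erd\H{o}s--R\'enyi clique bound as a black box, whereas the paper carries out the first-moment calculation directly (bounding the probability that $k$ given vertices form a clique by $c^{\Theta(k^2)}$ and taking a union bound over $\binom{n}{k}$ subsets). Both finishes are valid; yours is a touch more conceptual, the paper's a touch more self-contained.
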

	
	This lemma is proved in Section \ref{sec:Prelim}. With this in mind, our goal is really to find the clique number associated to a ``well-behaved" graphon that is equal to $1$ at one or more points $(x,x)$ with $x\in[0,1]$. The main contribution of this paper consists of several such results. Before presenting these results, however, one final observation: it is perhaps natural to ask whether graphons that are close in cut distance will produce $W$-random graphs whose clique numbers are close asymptotically. In general, however, this is not the case. This can be illustrated by a wide variety of examples, but perhaps the simplest is the following family of graphons on $[0,1]^2$: 
	\[
	W_\eps(x,y) = \begin{cases}
	1 &\text{if }(x,y)\in[0,\eps]^2\\
	0 & \text{otherwise}
	\end{cases}
	\]
	for each $\eps >0$. Under the cut norm, $W_\eps$ converges to the zero graphon as $\eps\to 0$, but for any fixed $\eps$, the clique number of $\mathbb{G}(n,W_\eps)$ is $(1+o(1))\eps n = \Theta(n)$ asymptotically almost surely (see Lemma~\ref{lem:IntervalPointConcent}). Indeed, the primary driver of clique number for a $W$-random graph is not global behavior (as measured by the cut norm), but local behavior near points where $W$ is maximized. Following are several results characterizing clique number in terms of this local behavior. First, and perhaps surprisingly, for a graphon equal to 1 at only a finite number of points $(x,x)$, we will very often obtain a clique number of $\Theta(\sqrt{n})$.
	
	\begin{theorem}\label{thm:DirectionalDerivativeCliqueNum}
		Let $W\colon[0,1]^2\to [0,1]$ be a graphon equal to $1$ at some collection of points $(a_1,a_1),\dots, (a_k,a_k)$, and essentially bounded away from 1 in some neighborhood of $(x,x)$ for each other $x\in[0,1]$. If all directional derivatives of $W$ exist at the points $(a_1,a_1),\dots, (a_k,a_k)$, and are uniformly bounded away from 0 and $-\infty$, then $\omega(\mathbb{G}(n,W)) = \Theta(\sqrt{n})$ asymptotically almost surely.
	\end{theorem}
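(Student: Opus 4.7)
The plan is to establish matching upper and lower bounds of order $\sqrt{n}$: a first-moment argument for the upper bound, and a localized Erd\H{o}s-R\'enyi-type computation for the lower bound. The directional-derivative hypothesis at each $(a_l, a_l)$ immediately yields two-sided local bounds
\[
c_0 \|(x,y)-(a_l,a_l)\| \;\le\; 1 - W(x,y) \;\le\; C_0 \|(x,y)-(a_l,a_l)\|
\]
in a neighborhood of $(a_l, a_l)$, for constants $0 < c_0 \le C_0$ depending only on $W$. The lower inequality on $1-W$ drives the upper bound on $\omega$, and the upper inequality drives the lower bound.

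For the upper bound, the goal is $\Ex[\#\,t\text{-cliques}] = o(1)$ for $t = C'\sqrt{n}$ with $C'$ a sufficiently large constant. I would first combine the local bound $1-W \ge c_0\|(x,y)-(a_l,a_l)\|$ with the essential boundedness of $W$ away from $1$ elsewhere on the diagonal (extending this control via Lemma~\ref{lem:TheImportanceOfBeingDiagonal}-style arguments) to prove a global separable estimate
\[
W(x,y) \;\le\; \exp\!\bigl(-c^*\bigl(g(x)+g(y)\bigr)\bigr), \qquad g(x) := \min_{1 \le l \le k} |x - a_l|,
\]
using the elementary inequality $\|(x,y)-(a_l,a_l)\| \ge (g(x)+g(y))/\sqrt 2$. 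Separability is the crucial feature: with $x_1, \ldots, x_t$ i.i.d.\ uniform on $[0,1]$,
\[
\Ex\!\left[\prod_{i<j} W(x_i, x_j)\right] \;\le\; \left(\int_0^1 e^{-c^*(t-1)\,g(x)}\, dx\right)^{\!t} \;\le\; \left(\frac{M}{t}\right)^{\!t}
\]
for some constant $M$, because $g$ is the distance to a finite set of points. Multiplying by $\binom{n}{t} \le (en/t)^t$ gives $\Ex[\#\,t\text{-cliques}] \le (eMn/t^2)^t$, which tends to $0$ whenever $t = C'\sqrt{n}$ and $C' > \sqrt{eM}$.

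For the lower bound, I would localize to a single center: take the window $J_n = [a_1 - \alpha/\sqrt{n},\, a_1 + \alpha/\sqrt{n}]$ for a small constant $\alpha > 0$ (with an obvious modification if $a_1 \in \{0,1\}$). A Chernoff bound guarantees $m = (1+o(1))\cdot 2\alpha\sqrt{n}$ sampled vertices in $J_n$ a.a.s., and the local upper estimate $1-W \le C_0 \|(x,y)-(a_1,a_1)\|$ forces $W(x_i, x_j) \ge 1 - O(\alpha/\sqrt{n})$ for every pair in $J_n \times J_n$. Thus, conditional on positions, the induced subgraph stochastically dominates the Erd\H{o}s-R\'enyi graph $G(m,\, 1 - C''\alpha/\sqrt{n})$, whose expected number of non-edges is $O(\alpha^3 \sqrt{n})$. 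Since non-edges are conditionally independent Bernoullis, Bernstein's inequality shows this count is also $O(\alpha^3\sqrt{n})$ a.a.s., and Tur\'an's theorem applied to the complement then produces a clique of size at least $m^2/(m + O(\alpha^3\sqrt{n})) = \Omega(\sqrt{n})$ with probability $1 - o(1)$, provided $\alpha$ is taken small.

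The main technical obstacle I anticipate is in the upper bound: rigorously assembling the local estimates into the global separable bound $W \le \exp(-c^*(g(x)+g(y)))$. The hypotheses directly control $W$ only on neighborhoods of diagonal points, and one must handle both the ``moderate-distance'' regime (transitioning between the directional-derivative bound near each $(a_l,a_l)$ and the bounded-away-from-$1$ estimate elsewhere on the diagonal) and any off-diagonal behavior (via Lemma~\ref{lem:TheImportanceOfBeingDiagonal}) in order to extract a single uniform estimate. The lower bound is comparatively routine, reducing to a standard concentration-plus-Tur\'an argument in a very dense $G(m,p)$.
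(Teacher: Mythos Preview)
Your lower bound is essentially the paper's argument (Lemma~\ref{lem:SqrtLowerBd}): localize to an interval of width $\Theta(1/\sqrt{n})$ around one $a_l$, observe that the induced subgraph is nearly complete, and extract a clique by deleting one endpoint from each non-edge. Whether one finishes with Tur\'an or greedy deletion is immaterial.

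The upper bound, however, has a genuine gap. The global separable estimate
\[
W(x,y)\ \le\ \exp\!\bigl(-c^*(g(x)+g(y))\bigr)
\]
is \emph{false} under the hypotheses of the theorem: nothing constrains $W$ away from the diagonal, so $W$ may equal $1$ at off-diagonal points $(x_0,y_0)$ with $g(x_0),g(y_0)>0$. For instance, take $W(x,y)=\max\{(1-x)(1-y),\ \phi(|x-y|)\}$ with $\phi$ smooth, $\phi\equiv 0$ on $[0,\tfrac14]$ and $\phi\equiv 1$ on $[\tfrac12,1]$. This graphon satisfies every hypothesis with $k=1$ and $a_1=0$, yet $W(0.2,0.8)=1$, killing the bound. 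You propose to repair this via Lemma~\ref{lem:TheImportanceOfBeingDiagonal}, but that lemma is a \emph{clique-number} statement derived from a pigeonhole argument on vertex positions; it yields no pointwise control on $W$ and therefore cannot be fed into the first-moment integral to produce ``a single uniform estimate''. Separability is exactly what fails here, and it was the crucial feature of your computation.

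The paper sidesteps this entirely by never seeking a global pointwise bound on $W$. Instead (Lemma~\ref{lem:UpperDini}), it partitions $[0,1]$ into finitely many subintervals, each having a single $a_l$ as an endpoint, and bounds $\omega(\mathbb{G}(n,W))$ by the sum of the clique numbers of the restricted subgraphons (Lemma~\ref{lem:SubCliqueN}). On each piece the local-comparison Lemma~\ref{lem:LocalDominance} is applied: in a small neighborhood of $(a_l,a_l)$ one dominates $W$ by a concrete rank-one graphon $W_r(x,y)=(1-x)^r(1-y)^r$ with slightly smaller directional derivatives, and outside that neighborhood the essential-boundedness hypothesis together with Lemma~\ref{lem:TheImportanceOfBeingDiagonal} contributes only $O(\log n)$. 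The first-moment computation is then carried out only for $W_r$, where separability is genuine. The point is that the comparison to a separable majorant is only needed \emph{locally} near one $(a_l,a_l)$ at a time, so off-diagonal values of $W$ never enter.
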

	
	We can expand this result to graphons $W$ whose directional derivatives are not defined at the points where $W(a,a) = 1$. In Section \ref{sec:Sqrt}, we give a more complete characterization in terms of the Dini derivatives of $W$ (the limsup and liminf of the difference quotient that defines the ordinary derivative) at the points where $W(a,a) = 1$. In particular, this characterization will show that if $W$ is ``too steep" at the points where it is equal to 1, then the clique number of $\mathbb{G}(n,W)$ will be $o(\sqrt{n})$ (Lemma~\ref{lem:UpperDini} (\ref{lem:UpperDiniInfty})), and if $W$ is ``too flat" at these points  (derivatives equal to zero), then the clique number will be $\omega(\sqrt{n})$ (Lemma~\ref{lem:LowerDini} (\ref{lem:LowerDiniZero})). This expanded characterization will also yield the following. 
	
	\begin{restatable}{lemma}{firstLipschitz}
		\label{lem:Lipschitz}
		Let $W\colon[0,1]^2\to [0,1]$ be a graphon equal to 1 at some point $(a,a)$. If $W$ is locally Lipschitz continuous at $(a,a)$, then $\omega(\mathbb{G}(n,W)) = \Omega(\sqrt{n})$ asymptotically almost surely.
	\end{restatable}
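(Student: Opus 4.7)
The plan is to exploit the Lipschitz hypothesis to find an interval around $a$ of width $\Theta(1/\sqrt{n})$ on which $W$ is so close to $1$ that a clique of size $\Theta(\sqrt{n})$ emerges from the vertices sampled there, after pruning a handful of vertices to eliminate any stray non-edges. By local Lipschitz continuity at $(a,a)$, fix constants $L, \delta_0 > 0$ such that $1 - W(x,y) \leq L(|x-a| + |y-a|)$ for every $(x,y) \in [a-\delta_0, a+\delta_0]^2 \cap [0,1]^2$. Fix a small constant $c > 0$ (to be chosen at the end), set $\delta_n = c/\sqrt{n}$, and let $I_n = [a-\delta_n, a+\delta_n] \cap [0,1]$.

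Let $N$ denote the number of sampled vertices lying in $I_n$, and let $Y$ denote the number of non-edges of $\mathbb{G}(n,W)$ among these vertices. Since $N \sim \Bin(n, |I_n|)$ has mean of order $\sqrt{n}$, Chebyshev's inequality gives $N \geq c\sqrt{n}$ asymptotically almost surely. For $Y$, the Lipschitz bound yields $\int_{I_n^2}(1-W(x,y))\,dx\,dy \leq 4L\delta_n^3$, hence
\[
\Ex[Y] \;=\; \binom{n}{2}\int_{I_n^2}(1-W(x,y))\,dx\,dy \;\leq\; 2L c^3 \sqrt{n}.
\]
To control fluctuations I would write $Y = \sum_{\{i,j\}} Y_{ij}$ with $Y_{ij} = \mathbf{1}_{x_i, x_j \in I_n}\,\mathbf{1}_{ij\text{ is a non-edge}}$ and compute $\Var(Y) = \sum_{\{i,j\},\{k,l\}}\operatorname{Cov}(Y_{ij}, Y_{kl})$, splitting by $|\{i,j\}\cap\{k,l\}|$: vertex-disjoint pairs are jointly independent and contribute nothing; pairs sharing one or two vertices contribute $O(n^3\delta_n^5) + O(n^2\delta_n^3) = O(\sqrt{n})$ via the pointwise bound $1 - W \leq 2L\delta_n$ on $I_n^2$. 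Chebyshev then gives $Y \leq (2Lc^3 + o(1))\sqrt{n}$ asymptotically almost surely.

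To finish, for each non-edge among the $N$ vertices in $I_n$ delete one of its endpoints; the remaining vertices form a clique of size at least $N - Y \geq (c - 2Lc^3 - o(1))\sqrt{n}$. Choosing $c < 1/\sqrt{2L}$ makes $c - 2Lc^3 > 0$, producing a clique of size $\Omega(\sqrt{n})$ asymptotically almost surely. The main subtlety is the variance estimate for $Y$, since the non-edge indicators are coupled through the shared randomness in the vertex locations; however, once one observes that indicators on vertex-disjoint pairs are jointly independent, the computation reduces to the two routine estimates above. An alternative route would be to apply a Janson-type inequality to the existence of a $k$-clique in the ER graph stochastically dominated by the subgraph on $I_n$, but the prune-the-non-edges approach is more direct because the expected number of obstructions is already of the same order as the target clique size.
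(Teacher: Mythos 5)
Your proof is correct, and it takes a more direct route than the paper does. The paper derives this lemma through a chain of reductions: the Lipschitz hypothesis is first converted into a lower bound on the lower Dini derivatives at $(a,a)$, then $W$ is dominated from below near $(a,a)$ by a member of the rank-1 family $W_r(x,y)=(1-x)^r(1-y)^r$, the clique number is transferred through the coupling argument of Lemma~\ref{lem:LocalDominance}, and finally the explicit bound $\omega(\mathbb{G}(n,W_r))\geq(1/(12er))^{1/2}\sqrt{n}$ of Lemma~\ref{lem:SqrtLowerBd} is invoked. The core of that last lemma is the same idea you use --- take the $\Theta(\sqrt n)$ vertices in a window of width $\Theta(n^{-1/2})$ near the point where $W=1$, show there are only $O(\sqrt n)$ missing edges among them, and delete one endpoint per missing edge --- but the execution differs: the paper conditions on the vertex positions and takes a union bound over all sets of $k$ potential missing edges, obtaining an exponentially small failure probability, whereas you control the non-edge count $Y$ by a first- and second-moment computation and Chebyshev. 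Your variance step is genuinely needed (Markov alone gives only a constant failure probability at the threshold $\Theta(\sqrt n)$), and your covariance split is right: disjoint pairs are independent, pairs sharing a vertex contribute $O(n^3\delta_n^5)$, identical pairs $O(n^2\delta_n^3)$, all $O(\sqrt n)$; note also that this second moment is taken on the obstruction count rather than the clique count, so it sidesteps the high-variance phenomenon documented in the appendix. What the paper's longer route buys is modularity --- the family $W_r$, the domination lemma, and the Dini-derivative formulation are reused for the matching upper bounds, the $o(\sqrt n)$ and $\omega(\sqrt n)$ refinements, and the H\"older case --- while your argument is shorter, self-contained, and works directly with an arbitrary Lipschitz $W$. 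One cosmetic point: if $a\in\{0,1\}$, the interval $I_n$ has length only $\delta_n$, so Chebyshev gives $N\geq(1-o(1))c\sqrt n$ rather than $N\geq c\sqrt n$; this does not affect the conclusion.
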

	
	(We will recall the definition of local Lipschitz continuity at a point $(a,a)$ in the proof of Lemma~\ref{lem:Lipschitz}.) In addition, in Section \ref{sec:Poly}, we present a family of graphons yielding clique numbers $\Theta(n^\alpha)$ for any constant $\alpha>0$. 
	
	\begin{restatable}{theorem}{PolyFamClique}
		\label{thm:PolyFamCliqueNum}
		For any constant $r>0$, define the graphon
		\[U_r(x,y) := (1-x^r)(1-y^r).\]
		The random graph $\mathbb{G}(n,U_r)$ asymptotically almost surely has clique number $\Theta(n^{\frac{r}{r+1}})$.
	\end{restatable}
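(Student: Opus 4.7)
The plan is to prove matching bounds $\omega(\mathbb{G}(n, U_r)) = O(n^{r/(r+1)})$ and $\omega(\mathbb{G}(n, U_r)) = \Omega(n^{r/(r+1)})$. The upper bound will come from a first-moment calculation, which is tractable because $U_r$ factors cleanly. For the lower bound, I would restrict attention to vertices with very small coordinate $x_i$, where the edge probabilities are nearly one, and then extract a clique on a constant fraction of these vertices by bounding the non-edge count and invoking the Caro--Wei inequality.

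For the first moment, the factorization
\[ \prod_{1 \leq i < j \leq k} U_r(x_i, x_j) = \prod_{i=1}^k (1-x_i^r)^{k-1} \]
reduces the calculation to a one-dimensional integral: $\mathbb{E}[X_k] = \binom{n}{k}\, I(k)^k$, where $I(k) := \int_0^1 (1-x^r)^{k-1}\,dx$. The substitution $u = x^r$ identifies $I(k) = \tfrac{1}{r} B(1/r, k)$, and the standard Gamma-ratio asymptotic $\Gamma(k)/\Gamma(k+1/r) \sim k^{-1/r}$ gives $I(k) \sim \Gamma(1+1/r)\, k^{-1/r}$. Combined with $\binom{n}{k} \leq (en/k)^k$, this yields $\mathbb{E}[X_k] \leq \bigl(C_r\, n / k^{(r+1)/r}\bigr)^k$ for a constant $C_r$, which tends to $0$ once $k \geq C\, n^{r/(r+1)}$ for $C = C(r)$ sufficiently large. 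Markov's inequality then delivers the upper bound a.a.s.

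For the lower bound, fix a small constant $c > 0$ (to be chosen) and set $\delta := c\, n^{-1/(r+1)}$ and $K := c\, n^{r/(r+1)}$. Let $V := \{i : x_i \leq \delta\}$, so that $|V|$ is distributed as $\mathrm{Bin}(n, \delta)$ with mean $K$; a Chernoff bound gives $|V| \in [K/2,\, 2K]$ a.a.s. Conditionally on the $x_i$'s, each pair in $V$ fails to span an edge with probability at most $1 - (1-\delta^r)^2 \leq 2\delta^r$, so the number $N$ of non-edges in $G[V]$ has conditional expectation at most $|V|^2 \delta^r \leq 4\, c^{r+1} K$ on the event $\{|V| \leq 2K\}$. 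A further Chernoff bound, on the conditionally independent non-edge indicators, promotes this to $N \leq 8\, c^{r+1} K$ a.a.s.

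Applying the Caro--Wei inequality to the complement $\overline{G[V]}$ then gives
\[ \omega\bigl(\mathbb{G}(n, U_r)\bigr) \;\geq\; \omega(G[V]) \;\geq\; \frac{|V|}{1 + 2N/|V|} \;\geq\; \frac{K/2}{1 + 32\, c^{r+1}}, \]
which is $\Omega(n^{r/(r+1)})$ provided $c$ is chosen small enough. The only delicate point will be cleanly separating the two sources of randomness---the vertex labels $(x_i)$ and the edge indicators---which I would handle by conditioning first on the high-probability event $\{|V| \in [K/2, 2K]\}$ and then applying Chernoff to the non-edge indicators, which are independent given the $x_i$'s.
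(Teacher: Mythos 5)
Your proposal is correct and follows essentially the same route as the paper: the upper bound is the identical first-moment computation (factorization, the substitution $u=x^r$, the Beta/Gamma evaluation, and Markov), and the lower bound rests on the same key idea of isolating the roughly $n^{r/(r+1)}$ vertices with coordinate at most order $n^{-1/(r+1)}$ and showing the induced subgraph misses few edges. The only cosmetic difference is in how the few non-edges are handled---you control the non-edge count by a conditional Chernoff bound and invoke Caro--Wei, whereas the paper takes a union bound over sets of $k$ potential missing edges and deletes one vertex per missing edge---but both yield a clique on a constant fraction of these vertices, hence $\Theta(n^{r/(r+1)})$.
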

	
	It will be shown in Section~\ref{sec:Poly} that this implies the following more general result.
	
	\begin{restatable}{theorem}{AlphaHolder}\label{thm:AlphaHolder}
		Let $W\colon[0,1]^2\to [0,1]$ be a graphon equal to 1 at some point $(a,a)$. If $W$ is locally $\alpha$-H\"older continuous at $(a,a)$ for some constant $\alpha$, then $\omega(\mathbb{G}(n,W)) = \Omega(n^{\frac{\alpha}{\alpha+1}})$ asymptotically almost surely.
	\end{restatable}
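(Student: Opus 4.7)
The plan is to reduce the lower bound to Theorem~\ref{thm:PolyFamCliqueNum} via a pointwise coupling on a small one-sided neighborhood of $(a,a)$. First I would unpack the hypothesis: local $\alpha$-Hölder continuity at $(a,a)$ together with $W(a,a)=1$ yields constants $\delta_0,L>0$ such that
\[
1-W(x,y)\leq L\bigl(|x-a|^{\alpha}+|y-a|^{\alpha}\bigr)
\]
for all $(x,y)\in [a-\delta_0,a+\delta_0]^2\cap[0,1]^2$, absorbing into $L$ the harmless constant needed to pass from the Euclidean norm to the sum of absolute values.

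Next, I would fix a one-sided interval $I=[a,a+\delta]$ of length $\delta\leq\min\{\delta_0,(2L)^{-1/\alpha}\}$ (using $[a-\delta,a]$ instead if $a=1$) and rescale via $u=(x-a)/\delta$, $v=(y-a)/\delta$. The rescaled graphon $W'(u,v):=W(a+\delta u,a+\delta v)$ on $[0,1]^2$ then satisfies
\[
1-W'(u,v)\leq L\delta^{\alpha}(u^{\alpha}+v^{\alpha})\leq \tfrac12(u^{\alpha}+v^{\alpha}).
\]
On the other hand $1-U_{\alpha}(u,v)=u^{\alpha}+v^{\alpha}-u^{\alpha}v^{\alpha}$, and since $u^{\alpha}v^{\alpha}\leq\min(u^{\alpha},v^{\alpha})\leq\tfrac12(u^{\alpha}+v^{\alpha})$ on $[0,1]^2$, we get $1-U_{\alpha}(u,v)\geq\tfrac12(u^{\alpha}+v^{\alpha})\geq 1-W'(u,v)$, i.e.\ $W'\geq U_{\alpha}$ pointwise. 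This gives a monotone coupling $\mathbb{G}(m,U_{\alpha})\subseteq\mathbb{G}(m,W')$ for every $m$ (use the same uniform labels and the same independent $[0,1]$-uniform edge variables).

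Finally, I would invoke concentration: the number $m$ of vertices of $\mathbb{G}(n,W)$ whose labels fall in $I$ is $\Bin(n,\delta)$-distributed, so $m\geq \delta n/2$ a.a.s., and conditional on the labels in $I$, the induced subgraph has exactly the distribution of $\mathbb{G}(m,W')$ after rescaling. Combining this with the coupling and Theorem~\ref{thm:PolyFamCliqueNum} yields
\[
\omega(\mathbb{G}(n,W))\geq\omega(\mathbb{G}(m,U_{\alpha}))=\Theta\!\bigl(m^{\alpha/(\alpha+1)}\bigr)=\Theta\!\bigl(n^{\alpha/(\alpha+1)}\bigr)
\]
asymptotically almost surely. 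I do not anticipate a serious obstacle: the only nontrivial ingredient is the pointwise comparison $W'\geq U_{\alpha}$, and that collapses to the elementary fact $u^{\alpha}v^{\alpha}\leq\min(u^{\alpha},v^{\alpha})$ on $[0,1]^2$; everything else is a standard binomial concentration plus the already-established polynomial-family result.
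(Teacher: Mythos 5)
Your argument is correct, and at its core it is the same reduction the paper uses: dominate $W$ from below, locally near $(a,a)$, by a member of the polynomial family and then quote the clique-number bound for that family (Theorem~\ref{thm:PolyFamCliqueNum}, really only its lower bound, Lemma~\ref{lem:PolyLowerBd}). The implementation differs in two ways worth noting. First, you absorb the H\"older constant by shrinking the interval and rescaling the \emph{domain} (choosing $\delta$ with $L\delta^\alpha\le\tfrac12$ so that the rescaled graphon dominates $U_\alpha$ itself, via the elementary inequality $u^\alpha v^\alpha\le\tfrac12(u^\alpha+v^\alpha)$ on $[0,1]^2$); the paper instead folds the constant into the comparison graphon, defining $U_{\alpha,C}(x,y)=(1-Cx^\alpha)(1-Cy^\alpha)$ truncated at $C^{-1/\alpha}$, shows $1-U_{\alpha,C}\ge C\max(x,y)^\alpha$ in the sup-norm, and then rescales to identify $\mathbb{G}(n,U_{\alpha,C})$ with a $U_\alpha$-random graph on $\sim C^{-1/\alpha}n$ vertices. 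Second, you handle the bookkeeping by hand (monotone edge coupling on the same vertex labels, plus binomial concentration for the number of labels landing in $I$), whereas the paper routes this through its general machinery, Lemma~\ref{lem:LocalDominance} (built on Lemma~\ref{lem:SubCliqueN} and Lemma~\ref{lem:IntervalPointConcent}), which costs an extra $O(\log n)$ term but requires no new coupling argument. Your route is more self-contained; the paper's reuses lemmas needed elsewhere. The one step you should make explicit is the application of Theorem~\ref{thm:PolyFamCliqueNum} at the \emph{random} vertex count $m$: condition on the labels in $I$ and use either monotonicity of the clique number in $m$ under the natural coupling, or the fact that the failure probability in Lemma~\ref{lem:PolyLowerBd} tends to $0$ uniformly over $m\ge\delta n/2$; this is routine (the paper takes the same liberty in Lemma~\ref{lem:LocalDominance} and says so), but as written it is asserted rather than argued.
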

		
	(We will recall the definition of local $\alpha$-H\"older continuity at a point $(a,a)$ immediately before the proof of Theorem~\ref{thm:AlphaHolder}.) We will also be able to use the characterization of $\omega(\mathbb{G}(n,U_r))$ given in Section~\ref{sec:Poly} to show that if a graphon $W$ has infinitely many derivatives equal to zero at a point $(a,a)$ where $W(a,a) = 1$, then the clique number of $\mathbb{G}(n,W)$ will be $n^{1-o(1)}$ asymptotically almost surely. In other words, if $W$ is ``extremely flat" at the points where it is equal to 1, then the clique number of $\mathbb{G}(n,W)$ will be nearly linear. We will prove this for the following specific example, but the same reasoning can apply more generally.
	
	\begin{restatable}{proposition}{TaylorSeriesAlmostLinear}\label{prop:TaylorSeriesAlmostLinear}
		For the graphon $W:[0,1]^2\to [0,1]$ defined by
		\[W=(1-f(x))(1-f(y)),\text{ where } f(x) = 
		\begin{cases}
		e^{-1/x^2} & x\neq 0\\
		0 & x = 0
		\end{cases},\]
		the random graph $\mathbb{G}(n,W)$ has clique number $n^{1-o(1)}$ asymptotically almost surely. 
	\end{restatable}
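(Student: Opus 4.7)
The plan is to reduce to Theorem~\ref{thm:PolyFamCliqueNum} by exploiting the super-polynomial flatness of $f$ at $0$. The key observation is that $f(x) = e^{-1/x^2}$ satisfies $\lim_{x \to 0^+} f(x)/x^r = 0$ for every fixed $r > 0$, so for each $r > 0$ there exists $\delta_r \in (0,1]$ such that $f(x) \le x^r$ for all $x \in [0,\delta_r]$. This pointwise inequality immediately gives
\[
W(x,y) \;=\; (1-f(x))(1-f(y)) \;\ge\; (1-x^r)(1-y^r) \;=\; U_r(x,y)
\]
for all $(x,y) \in [0,\delta_r]^2$.

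Fix $r > 0$, set $\delta = \delta_r$, and let $S \subseteq \{1,\dots,n\}$ be the set of indices $i$ with $x_i \in [0,\delta]$. Since $|S| \sim \Bin(n,\delta)$, a standard Chernoff bound gives $|S| \ge \delta n/2$ asymptotically almost surely. Conditional on $|S|$ and on the positions of the vertices in $S$, and after rescaling $[0,\delta]$ to $[0,1]$ via $x \mapsto x/\delta$, the induced subgraph $\mathbb{G}(n,W)[S]$ is distributed as $\mathbb{G}(|S|, W')$, where $W'(y,z) = (1-f(\delta y))(1-f(\delta z))$ on $[0,1]^2$. The inequality above, together with $\delta \le 1$, gives $W'(y,z) \ge (1-(\delta y)^r)(1-(\delta z)^r) \ge (1-y^r)(1-z^r) = U_r(y,z)$ pointwise on $[0,1]^2$. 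A monotone coupling of the edge indicators then yields $\omega(\mathbb{G}(|S|, W')) \ge \omega(\mathbb{G}(|S|, U_r))$, which by Theorem~\ref{thm:PolyFamCliqueNum} (applied to the $|S| = \Omega(n)$ vertices in $S$) is $\Omega(n^{r/(r+1)})$ a.a.s.

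Since $\mathbb{G}(n,W)[S]$ is a subgraph of $\mathbb{G}(n,W)$, we conclude $\omega(\mathbb{G}(n,W)) = \Omega(n^{r/(r+1)})$ a.a.s.\ for every fixed $r > 0$. Given any $\eps > 0$, choose $r$ large enough that $r/(r+1) > 1 - \eps$; then $\omega(\mathbb{G}(n,W)) \ge n^{1-\eps}$ a.a.s. Combined with the trivial upper bound $\omega(\mathbb{G}(n,W)) \le n$, this is precisely the statement $\omega(\mathbb{G}(n,W)) = n^{1-o(1)}$.

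The argument has no single hard step: all of its substantive content is inherited from Theorem~\ref{thm:PolyFamCliqueNum}, with the flatness of $f$ at $0$ supplying a dominating $U_r$ for arbitrarily large $r$. The only mild subtleties are the $x \mapsto x/\delta$ rescaling and the fact that $|S|$ is random; both are handled by conditioning on the a.a.s.\ event $|S| \ge \delta n/2$ and invoking Theorem~\ref{thm:PolyFamCliqueNum} on the $(\delta n/2)$-vertex induced random graph.
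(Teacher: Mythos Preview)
Your proof is correct and follows the same overall strategy as the paper: show that $W \ge U_r$ in a neighborhood of $(0,0)$ for every fixed $r$, then invoke the $\Theta(n^{r/(r+1)})$ clique number of $\mathbb{G}(n,U_r)$ and let $r\to\infty$. The paper establishes the local inequality $W\ge U_r$ via Taylor's theorem and a remainder bound (using that all partial derivatives of $W$ vanish at the origin), and then appeals to Lemma~\ref{lem:LocalDominance} to transfer this to clique numbers. Your route is more elementary on both counts: you use the direct one-variable fact $e^{-1/x^2}=o(x^r)$ to get $f(x)\le x^r$ near $0$ (which, because $W$ and $U_r$ are both rank-one, immediately yields $W\ge U_r$ on a box), and you replace the citation of Lemma~\ref{lem:LocalDominance} with an explicit restrict--rescale--couple argument. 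The only point worth tightening is the application of Theorem~\ref{thm:PolyFamCliqueNum} to a random number $|S|$ of vertices; this is fine, but strictly speaking one conditions on $|S|=m$ and uses that the a.a.s.\ lower bound from the theorem holds uniformly for all $m\ge \delta n/2$ (or one simply fixes $m=\lfloor \delta n/2\rfloor$ and discards any surplus vertices).
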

	
	It should be noted that, in contrast to Theorem~\ref{thm:DolezalCliqueNum} and the characterization of clique number for Erd\H os-R\'enyi random graphs, all the results above give the relevant clique number up to a constant. For any graphon $W$, however, the clique number of $\mathbb{G}(n,W)$ is highly concentrated for large values of $n$; the following was proved in \cite{DHM17}. (This is slightly different from the original formulation, but follows directly the proof of Theorem 2.2 in \cite{DHM17}.)
	
	\begin{theorem}[Dole{\v{z}}al, Hladk{\'y}, and M{\'a}th{\'e} {\cite[Thm. 2.2]{DHM17}}]\label{thm:DolezalConcentration}
		For any graphon $W$, with probability $1-o(1)$,
		\[
		\omega(\mathbb{G}(n,W)) = (1+o(1))\cdot \mathbb{E}[\omega(\mathbb{G}(n,W))].
		\]
	\end{theorem}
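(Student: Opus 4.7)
The plan is to exploit a Lipschitz-type property of the clique number and feed it into a martingale concentration argument. Realise $\mathbb{G}(n,W)$ on a product probability space by sampling $x_1,\ldots,x_n$ i.i.d.\ from $[0,1]$ together with an i.i.d.\ family $\{U_{ij}\}_{i<j}$ of uniforms on $[0,1]$, and placing the edge $ij$ exactly when $U_{ij}\le W(x_i,x_j)$. The central observation is that if two realisations differ only in the coordinate $x_i$ (or only in a single $U_{ij}$), the induced subgraph on the other $n-1$ vertices is unchanged, so both clique numbers lie in $\{\omega(G-i),\omega(G-i)+1\}$; hence $\omega(\mathbb{G}(n,W))$ is $1$-Lipschitz in each of the $n+\binom{n}{2}$ underlying coordinates.

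The first step is to apply the vertex-exposure Doob martingale $Y_k:=\mathbb{E}[\omega\mid x_1,\ldots,x_k,\{U_{ij}\}_{i<j\le k}]$, so that $Y_0=\mathbb{E}[\omega]$, $Y_n=\omega$, and $|Y_k-Y_{k-1}|\le 1$ by the Lipschitz property. Azuma--Hoeffding yields
\[
\Pr\bigl(|\omega-\mathbb{E}[\omega]|\ge t\bigr)\le 2\exp\!\bigl(-t^2/(2n)\bigr),
\]
so $|\omega-\mathbb{E}[\omega]|=O(\sqrt{n\log n})$ with probability $1-o(1)$. This already yields $(1+o(1))$-concentration whenever $\mathbb{E}[\omega]\gg\sqrt{n\log n}$, which covers all graphons treated in the main results of this paper (where $\mathbb{E}[\omega]$ is at least of order $\sqrt{n}$).

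For the residual regime $\mathbb{E}[\omega]=O(\sqrt{n\log n})$---which, in view of Lemma~\ref{lem:TheImportanceOfBeingDiagonal}, essentially forces $W$ to be bounded away from $1$ near the diagonal---plain Azuma is too coarse, and one must extract a sharper bound on the conditional variance of each increment. The plan is to show that $Y_k-Y_{k-1}\neq 0$ only when vertex $k$ participates in, or displaces, a maximum clique, an event whose probability is heuristically $O(\mathbb{E}[\omega]/n)$ by symmetry over the $n$ vertex positions. Summing gives $\Var(\omega)=O(\mathbb{E}[\omega])$ (up to possibly a logarithmic factor), and Freedman's inequality (or plain Chebyshev) then yields $|\omega-\mathbb{E}[\omega]|=o(\mathbb{E}[\omega])$ as long as $\mathbb{E}[\omega]\to\infty$. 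The trivial case $\mathbb{E}[\omega]=O(1)$ forces $W=0$ almost everywhere, so $\omega\equiv 1$ and the statement is immediate.

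The main obstacle is making rigorous the claim ``vertex $k$ lies in some maximum clique with probability $O(\mathbb{E}[\omega]/n)$'' for \emph{every} graphon $W$. The subtlety is that maximum cliques need not be unique nor well-separated---for stochastic-block-type graphons many maximum cliques may coexist---so a naive vertex-counting argument must be supplemented with control on the overlap structure of near-maximum cliques. This should be achievable by combining the $1$-Lipschitz property with an exchangeability argument over the $n$ vertex indices; in the range where $W$ is essentially bounded away from $1$ it can alternatively be deduced directly from the sharper characterisation in Theorem~\ref{thm:DolezalCliqueNum}.
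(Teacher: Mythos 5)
There is a genuine gap, and it sits exactly where the theorem has content. First, note that this paper does not prove the statement at all: it is imported from \cite{DHM17} (``follows directly the proof of Theorem 2.2'' there), so the benchmark is that external argument, and your proposal must stand on its own. The Azuma/vertex-exposure half is correct as far as it goes, but it only gives fluctuations of order $\sqrt{n\log n}$ (indeed nothing better than $\omega(\sqrt n)$ with probability $1-o(1)$), hence multiplicative concentration only when $\mathbb{E}[\omega(\mathbb{G}(n,W))]\gg\sqrt{n\log n}$. Contrary to your claim, this does not cover the graphons treated in this paper: the families of Section~\ref{sec:Sqrt} have $\mathbb{E}[\omega]=\Theta(\sqrt n)$, the family $U_r$ of Section~\ref{sec:Poly} has $\mathbb{E}[\omega]=\Theta(n^{r/(r+1)})$, which is below $\sqrt n$ for $r<1$, and the statement is asserted for \emph{every} graphon, including those essentially bounded away from $1$ near the diagonal, where $\mathbb{E}[\omega]=\Theta(\log n)$ (Lemma~\ref{lem:TheImportanceOfBeingDiagonal}, Theorem~\ref{thm:DolezalCliqueNum}). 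In that logarithmic regime one needs fluctuations $o(\log n)$, and no bounded-differences or Azuma--Freedman argument built from increments of size $1$ over $n$ exposure steps can deliver anything smaller than polynomial in $n$; so essentially all of the theorem rests on your second, admittedly heuristic, step.

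That second step is not a proof. The passage from ``vertex $k$ lies in, or displaces, a maximum clique with probability $O(\mathbb{E}[\omega]/n)$'' to a bound on the conditional variances of the Doob increments is unjustified: the increments $Y_k-Y_{k-1}$ are differences of conditional expectations given the revealed coordinates, exchangeability only yields an averaged, unconditional statement, and Freedman's inequality requires control of the predictable quadratic variation, not of an unconditional event probability. The event-probability claim itself is delicate (near-maximum cliques can be plentiful), and the target bound $\Var(\omega)=O(\mathbb{E}[\omega])$ is precisely the kind of assertion whose proof is the hard part; for $G_{n,p}$ the known route to such sharpness is moment computations giving two-point concentration, not martingales. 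Finally, the disposal of the residual case is incorrect: $\mathbb{E}[\omega]=O(1)$ does not force $W=0$ a.e.\ (e.g.\ the bipartite-type graphon equal to $1$ when exactly one of $x,y$ exceeds $\tfrac12$ and $0$ otherwise has $\omega\le 2$), so bounded clique number needs its own, separate argument. As written, the proposal establishes the conclusion only for graphons with $\mathbb{E}[\omega]\gg\sqrt{n\log n}$, which falls well short of ``for any graphon.''
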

	
	From this, we know that a correct constant always exists. But although the clique number $\omega(\mathbb{G}(n,W))$ is almost always very close to its expectation, it may occasionally be very large. Indeed, for many of the examples we will consider, the number of cliques of an appropriate size will have quite high variance, making it impossible to directly apply the second moment method as with Erd\H os-R\'enyi random graphs. (This obstacle is detailed more explicitly in Sections \ref{sec:Sqrt} and \ref{sec:Poly}, with proofs given in the appendix.) Instead, in proving the results above, we use the first moment method to establish upper bounds (the standard technique), while for lower bounds, we directly attempt to predict which vertices are likely to form a large clique, and show that this does indeed happen with high probability. It seems likely that to improve these lower bounds, a different technique would be necessary.
	
	It should be noted that the authors of \cite{DHM17} did not use the second moment method directly to prove Theorem~\ref{thm:DolezalCliqueNum}, but instead applied it to a carefully selected restriction of the graphon $W$, converting this back into a lower bound on $\omega(\mathbb{G}(n,W))$ via a somewhat complex argument. It is possible that a similar techinque could be used to improve some or all of the lower bounds given here. It is also possible that tighter bounds could obtained using techniques from the theory of large deviations as in \cite{AP03}; in this case, instead of looking at the (random) number of cliques $X$ of a given size in $\mathbb{G}(n,W)$ and attempting to give upper and lower bounds on $X$ that hold with high probability, we would define a random variable $X'$ that gives greater weight to those cliques arising from a ``typical" configuration of vertices (e.g., not too many vertices sampled from a small interval), and that would thus have lower variance than $X$. If we could find upper and lower bounds on $X'$, these could then be translated into upper and lower bounds on $X$.
	
	Another potentially interesting extension of the results above could be to consider graphons with an infinite number (either countable or uncountable) of points $(x,x)$ with $W(x,x) = 1$. For example, the following graphon is equal to 1 along the line $x=y$ and drops away from 1 off that line.
	
	\begin{restatable}{proposition}{LineExample}
		\label{prop:LineExample}
		Let $W(x,y) = 1-|x-y|$. The clique number of $\mathbb{G}(n,W)$ is $n^{1/2+o(1)}$ asymptotically almost surely.
	\end{restatable}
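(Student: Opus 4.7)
The plan is to prove matching upper and lower bounds of order $\sqrt{n}$ (both a.a.s.), which together give $\omega(\mathbb{G}(n,W)) = \Theta(\sqrt{n})$ and therefore the claimed $n^{1/2 + o(1)}$.

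The lower bound follows almost immediately from Lemma~\ref{lem:Lipschitz}. The graphon satisfies $W(1/2, 1/2) = 1$ and
\[
|W(x,y) - W(1/2, 1/2)| \;=\; |x - y| \;\le\; \sqrt{2}\,\sqrt{(x - 1/2)^2 + (y - 1/2)^2},
\]
so $W$ is locally Lipschitz at $(1/2, 1/2)$. The hypotheses of Lemma~\ref{lem:Lipschitz} do not require $W$ to be bounded away from $1$ at other diagonal points, so applying it directly gives $\omega(\mathbb{G}(n,W)) = \Omega(\sqrt{n})$ a.a.s.

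For the upper bound I would use the first moment method. Let $X_1, \ldots, X_k$ be iid uniform $[0,1]$, and write
\[
\mathbb{E}\bigl[\#k\text{-cliques in }\mathbb{G}(n,W)\bigr] = \binom{n}{k}\,\mathbb{E}\Bigl[\prod_{1 \le i < j \le k}(1 - |X_i - X_j|)\Bigr].
\]
Applying $1-t \le e^{-t}$ and letting $X_{(1)} \le \cdots \le X_{(k)}$ be the order statistics with gaps $G_m = X_{(m+1)} - X_{(m)}$ (and $G_0 = X_{(1)}$, $G_k = 1 - X_{(k)}$), a short double-counting argument gives the decomposition
\[
\sum_{1 \le i < j \le k}(X_{(j)} - X_{(i)}) \;=\; \sum_{m=1}^{k-1} m(k-m)\,G_m,
\]
since the gap $G_m$ appears in $X_{(j)} - X_{(i)}$ exactly when $i \le m < j$. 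The vector $(G_0, \ldots, G_k)$ is uniform on the standard $k$-simplex (density $k!$ on its projection to the first $k$ coordinates), so integrating out $G_0$ (on which the integrand does not depend) and then dropping the constraint $\sum G_m \le 1$ turns the remaining computation into a product of independent exponential integrals, yielding
\[
\mathbb{E}\Bigl[\prod_{i<j}(1-|X_i - X_j|)\Bigr] \;\le\; \frac{k!}{\prod_{m=1}^{k-1} m(k-m)} \;=\; \frac{k!}{\bigl((k-1)!\bigr)^{2}} \;=\; \frac{k}{(k-1)!}.
\]
Combined with $\binom{n}{k} \le n^{k}/k!$ this gives $\mathbb{E}[\#k\text{-cliques}] \le n^{k}/((k-1)!)^{2}$, and Stirling's formula shows this tends to $0$ whenever $k \ge (e + \eps)\sqrt{n}$, proving $\omega(\mathbb{G}(n,W)) \le (e+\eps)\sqrt{n}$ a.a.s.

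The only delicate step is the gap-variable computation above. The decomposition $\sum_{i<j}(X_{(j)} - X_{(i)}) = \sum_m m(k-m)G_m$ turns a highly correlated product over $\binom{k}{2}$ terms into an integral that factors across independent coordinates, and this factorization is exactly what makes the first moment bound tight enough to capture the correct $\sqrt{n}$ scaling; a naive estimate such as $\prod(1 - |X_i - X_j|) \le 1$ would lose the necessary $(k!)^{2}$ in the denominator and produce only a trivial bound.
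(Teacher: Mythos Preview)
Your proof is correct, and your upper bound argument is genuinely different from---and sharper than---the one in the paper.

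For the lower bound, both you and the paper invoke Lemma~\ref{lem:Lipschitz} at a diagonal point (you use $(1/2,1/2)$, the paper uses $(0,0)$); this part is essentially identical.

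For the upper bound, the paper explicitly remarks that the first moment method seems hard to apply here because $W(x,y)=1-|x-y|$ is not rank-1, so the integrand $\prod_{i<j}W(x_i,x_j)$ does not factor across the coordinates $x_1,\dots,x_k$. The paper therefore argues directly: for a candidate clique $S$ of size $3\delta n$, it splits $S$ into its bottom, middle, and top thirds by position in $[0,1]$, uses Lemma~\ref{lem:IntervalOccupied} to force the middle third to span an interval of length $\gtrsim \delta/2$, and then bounds the probability that every top--bottom pair is joined. A union bound over $\binom{n}{3\delta n}$ choices of $S$ forces $\delta \approx n^{-1/2}\log n$, yielding only the upper bound $n^{1/2+o(1)}$.

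You instead make the first moment method work by passing to order statistics: the identity
\[
\sum_{i<j}|X_i-X_j|=\sum_{m=1}^{k-1}m(k-m)\,G_m
\]
turns the product $\prod_{i<j}(1-|X_i-X_j|)\le \exp\bigl(-\sum_m m(k-m)G_m\bigr)$ into something that \emph{does} factor across the gap variables, which are jointly uniform on a simplex. After bounding the marginal density of $(G_1,\dots,G_{k-1})$ by $k!$ and extending to $[0,\infty)^{k-1}$, you obtain $\mathbb{E}\bigl[\prod_{i<j}(1-|X_i-X_j|)\bigr]\le k!/((k-1)!)^2$ and hence an expected clique count $\le n^k/((k-1)!)^2$, which is $o(1)$ for $k\ge (e+\eps)\sqrt{n}$. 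This gives $\omega(\mathbb{G}(n,W))=O(\sqrt{n})$ a.a.s., strictly stronger than the paper's $n^{1/2+o(1)}$, and combined with the Lipschitz lower bound actually proves $\omega(\mathbb{G}(n,W))=\Theta(\sqrt{n})$. One small point of phrasing: ``integrating out $G_0$'' produces a factor $(1-\sum_{m\ge 1}g_m)\le 1$, which you are implicitly discarding before dropping the simplex constraint; this is harmless for an upper bound but worth stating explicitly.
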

	
	This will be proved in Section \ref{sec:Extensions}; the lower bound follows directly from applying Lemma~\ref{lem:Lipschitz} to any point on the line $x=y$, and the upper bound is a fairly straightforward calculation. Both arguments could be used on a wide variety of such examples, but both are likely not tight in general. 
	
	As a last note, we discuss briefly the related problems of finding a large clique or a planted clique in a random graph, and how they relate to the work here. It is a long-standing problem, proposed by Karp in 1976 \cite{Ka76} to find a clique of size $(1+\eps)\log_2(n)$ in $G_{n,1/2}$ in polynomial time. (A clique of size $\sim 2\log_2(n)$ will almost always exist.) There are several polynomial-time algorithms that find a clique of size $(1+o(1))\log_2(n)$ (e.g., \cite{KS98}), but the original problem remains open. Of a similar flavor, but slightly different, the planted clique problem asks us to find a clique of size $k$ that is ``planted" in an Erd\H{o}s-R\'enyi random graph $G_{n,p}$ by randomly selecting $k$ vertices and adding all possible edges between them; we may ask for an algorithm that runs either in polynomial or unbounded time. In unbounded time, the planted clique can be recovered for $k$ quite close to the expected clique number for $G_{n,p}$, but perhaps surprisingly, if we ask for a polynomial-time algorithm, the best known methods find the planted clique only for $k=c\cdot\sqrt{n}$, for some particular constant $c$ (first proved in \cite{AKS98}, with a variety of simpler algorithms or algorithms improving the constant found later; see, for example, \cite{FR10} and \cite{DM15}). It could be interesting to explore these problems in the setting where the background graph is inhomogeneous (as opposed to $G_{n,p}$); it seems entirely possible that a large clique or hidden clique could be easier to recover in this setting. Indeed, this has been shown to be the case for several specific (mostly sparse) random graph models (see \cite{FK12}, \cite{BFK18}, and \cite{JLN10}). It is possible that a more general result along these lines could be established for some of graphs discussed here, or those in \cite{DHM17} or \cite{BCV18}, especially given knowledge of the clique number in the background graph.
	
	The remainder of this paper is structured as follows. In Section \ref{sec:Prelim}, we prove Lemma~\ref{lem:TheImportanceOfBeingDiagonal} and a few other simple technical lemmas that will be used throughout the paper. In Section \ref{sec:Sqrt}, we present a family of graphons giving clique numbers $\Theta(\sqrt{n})$, and use this to prove Theorem~\ref{thm:DirectionalDerivativeCliqueNum}, Lemma~\ref{lem:Lipschitz}, and an extension to graphons satisfying a more general set of conditions. In Section \ref{sec:Poly}, we prove Theorem~\ref{thm:PolyFamCliqueNum},  Theorem~\ref{thm:AlphaHolder}, and Proposition~\ref{prop:TaylorSeriesAlmostLinear}. In Section \ref{sec:Extensions}, we prove Proposition~\ref{prop:LineExample}, and discuss possible extensions of this work. And finally, in the appendix, we prove that for many of the $W$-random graphs discussed in other parts of the paper, the number of cliques of an appropriate size has high variance, making a direct application of the second moment method to establish a lower bound on the clique number impossible in those cases.
	
	\section{Preliminaries}\label{sec:Prelim}
	
	In this section, we establish some notation and technical lemmas that will be used throughout the rest of the paper. We will often want to focus only on a small portion of a graphon $W$, typically a neighborhood around a point where $W$ is equal to 1. In order to analyze how local behavior affects the clique number of a $W$-random graph, we first ascertain how many vertices will typically be sampled from a given neighborhood. Note: we will use ``a.a.s." throughout as an abbreviation for ``asymptotically almost surely", i.e., with probability approaching 1 as $n$ approaches $\infty$. And below, we write $\lambda$ for the Lebesgue measure on $\mathbb{R}$.
	
	\begin{lemma}\label{lem:IntervalPointConcent}
		Let $A(1),A(2),\dots$ be measurable subsets of $[0,1]$ with $\lambda(A(n))=\omega\left(\frac{1}{n}\right)$. Among $n$ points uniformly distributed on the interval $[0,1]$, the number of points in $A(n)$ will a.a.s.\ be $(1+o(1))n\lambda(A(n))$.
	\end{lemma}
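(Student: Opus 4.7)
The plan is to recognize that the count in question is a binomial random variable and then apply a standard second-moment concentration bound. Let $X_n$ denote the number of the $n$ sampled points lying in $A(n)$. Since the points are i.i.d.\ uniform on $[0,1]$, each independently lands in $A(n)$ with probability $p_n := \lambda(A(n))$, so $X_n \sim \Bin(n, p_n)$ with $\Ex[X_n] = np_n$ and $\Var(X_n) = np_n(1-p_n) \leq np_n$. The hypothesis $\lambda(A(n)) = \omega(1/n)$ translates directly to $np_n \to \infty$.

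Next, I would fix an arbitrary $\eps > 0$ and apply Chebyshev's inequality to bound the probability that $X_n$ deviates from its mean by more than an $\eps$ fraction:
\[
\Pr\bigl(|X_n - np_n| > \eps \cdot np_n\bigr) \;\leq\; \frac{\Var(X_n)}{(\eps \, np_n)^2} \;\leq\; \frac{1}{\eps^2 \, np_n}.
\]
Because $np_n \to \infty$, the right-hand side tends to $0$ for each fixed $\eps > 0$. Hence, for every $\eps > 0$, we have $X_n = (1 \pm \eps) np_n$ a.a.s., which is exactly the statement $X_n = (1+o(1)) n\lambda(A(n))$ a.a.s.

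There is no real obstacle here; the only subtlety is that ``a.a.s.\ equal to $(1+o(1))n\lambda(A(n))$'' must be interpreted as holding for every fixed tolerance $\eps$, which is precisely what Chebyshev delivers. One could instead use a Chernoff bound to obtain a much stronger tail estimate (exponential in $np_n$), which would be useful if one wanted to take union bounds over polynomially many such events $A(n)$ simultaneously; but for the statement as given, Chebyshev is already sufficient.
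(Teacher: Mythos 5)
Your proposal is correct and follows essentially the same route as the paper: identify the count as a binomial random variable with mean $n\lambda(A(n))=\omega(1)$ and apply Chebyshev's inequality (the paper simply makes the tolerance explicit by choosing $\eps=(np)^{-1/4}$ rather than quantifying over fixed $\eps$). No gap to report.
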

	
	\begin{proof}
		The number of points $X$ in any given subset of $[0,1]$ of measure $\lambda(A(n))$ is a binomial random variable with parameters $n$ and $p=\lambda(A(n))$. Therefore
		\begin{align*}
		\mathbb{E}[X]= np
		\end{align*}
		and
		\begin{align*}
		\operatorname{Var}(X) &= np(1-p)\\
		&\leq np.
		\end{align*}
		Thus, for any $\eps>0$, by Chebyshev's inequality
		\begin{align*}
		\Pr\left[|X-\mathbb{E}[X]|\geq \eps\mathbb{E}[X]\right]\ \leq\ \frac{\Var(X)}{\eps^2\mathbb{E}[X]^2}
		\ \leq\ \frac{np}{\eps^2(np)^2}
		\ =\ \frac{1}{\eps^2np}.
		\end{align*}
		By assumption, $np = n\cdot \lambda(A(n)) = \omega(1)$. So taking, for instance, $\eps^2 = (np)^{-1/2}$, we have
		\begin{align*}
		\Pr\left[|X-\mathbb{E}[X]|\geq \tfrac{1}{(np)^{1/4}}\cdot \mathbb{E}[X]\right]\leq\tfrac{1}{(np)^{1/2}}=o(1).
		\end{align*}
		Thus with probability $1-o(1)$, we have $X = (1+o(1))\mathbb{E}[X] = (1+o(1))n\lambda(A(n))$.
	\end{proof}
	
	In Section \ref{sec:Extensions}, we will need a slight strengthening of the result above; namely, if we sample $n$ points uniformly from $[0,1]$, the lemma below guarantees that no relatively large subset of these points will occupy an interval much smaller than expected. 
	
	\begin{lemma}\label{lem:IntervalOccupied}
		Let $\delta=\omega\big(\frac{1}{\sqrt{n}}\big)$. Among $n$ points uniformly distributed on the interval $[0,1]$, with probability $1-o(1)$, every set of $\delta n$ points will occupy an interval of length at least $\frac{\delta}{2}(1-o(1))$.
	\end{lemma}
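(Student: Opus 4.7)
The plan is to prove the contrapositive: with probability $1 - o(1)$, no interval of length at most $L := \delta/2$ contains as many as $\delta n$ of the sample points. This is even stronger than the statement, since it forces every collection of $\delta n$ samples to occupy an interval of length strictly greater than $\delta/2$.

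The first step is a reduction to $n$ canonical intervals. If some interval $I$ with $|I|\leq L$ contains $\delta n$ samples, let $x_j$ be the leftmost such sample; then every sample in $I$ lies in $[x_j, x_j + L]$. Thus it suffices to bound the probability that $|\{k : x_k \in [x_j, x_j + L]\}| \geq \delta n$ for some $j \in \{1,\ldots,n\}$, and a union bound over $j$ reduces the problem to estimating this for a fixed $j$. Conditionally on $x_j$, the number of \emph{other} samples landing in $[x_j, x_j + L]$ is stochastically dominated by a $\Bin(n-1, L)$ variable of mean $\mu = (n-1)\delta/2 = (1 + o(1)) \cdot n\delta/2$. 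Under the hypothesis $\delta = \omega(1/\sqrt{n})$ we have $\delta n - 1 = (2 - o(1))\mu$, so a multiplicative Chernoff bound yields a tail probability of $\exp(-\Omega(n\delta))$.

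Combining these estimates, the total failure probability is at most $n \exp(-\Omega(n\delta))$, and since $n\delta = \omega(\sqrt{n}) \gg \log n$, this is $o(1)$. The only subtle point --- and the main reason the hypothesis $\delta = \omega(1/\sqrt{n})$ appears --- is ensuring that the ratio $(\delta n - 1)/\mu$ is bounded away from $1$ so that the Chernoff exponent is genuinely of order $\mu$; this is exactly the content of $1/\delta = o(\sqrt{n})$, and it is also where a more naive estimate (for instance, choosing $L$ only slightly below $\delta$) would fail to give a useful bound.
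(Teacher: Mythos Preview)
Your argument is correct and takes a genuinely different route from the paper's proof. The paper lays down a \emph{fixed} grid of $1/\delta$ consecutive intervals of length $\delta$, applies Chebyshev's inequality (via Lemma~\ref{lem:IntervalPointConcent}) to bound the count in each, takes a union bound over the $1/\delta$ grid cells, and then observes that any interval of length $\delta$ is contained in the union of at most two adjacent cells. You instead anchor intervals at the sample points themselves, condition on the anchor, apply a Chernoff bound to the remaining $n-1$ samples, and take a union bound over the $n$ anchors.

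Your approach is more direct: it avoids the doubling step and delivers the slightly sharper conclusion that every $\delta n$ points span an interval of length strictly exceeding $\delta/2$, not merely $\tfrac{\delta}{2}(1-o(1))$. In fact your argument succeeds under the weaker hypothesis $\delta = \omega((\log n)/n)$, since all you need at the end is $n\delta \gg \log n$ to beat the union bound; by contrast, the paper's Chebyshev-based bound of $\tfrac{1}{\eps^2\delta^2 n}$ genuinely requires $\delta^2 n \to \infty$. One small correction to your commentary: the closing paragraph misidentifies where the hypothesis enters. The ratio $(\delta n - 1)/\mu$ tends to $2$ as soon as $\delta n \to \infty$, i.e.\ already for $\delta = \omega(1/n)$; the stated hypothesis $\delta = \omega(1/\sqrt{n})$ is used only to guarantee $n\exp(-\Omega(n\delta)) = o(1)$, and as just noted, even that step requires less.
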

	
	\begin{proof}
		We begin by dividing $[0,1]$ into consecutive intervals of length $\delta$. By Lemma~\ref{lem:IntervalPointConcent}, with probability $1-o(1)$, there will be at most $(\delta+o(1))n$ vertices in any fixed one of these intervals, as $\delta = \omega\left(\frac{1}{n}\right)$. For $\delta=\omega\big(\frac{1}{\sqrt{n}}\big)$, there will in fact be at most $(\delta+o(1))n$ vertices in \textit{each}; as shown in Lemma~\ref{lem:IntervalPointConcent}, if $X$ is the number of vertices in a given interval of length $\delta$, then for any $\eps>0$, we have
		\[
		\Pr\left[X\geq (1+\eps)\delta n\right]\leq \frac{1}{\eps^2(\delta n)}.
		\]
		Then, taking a union bound over the $\frac{1}{\delta}$ consecutive length-$\delta$ intervals, with probability $1-\frac{1}{\delta}\cdot \frac{1}{\eps^2(\delta n)} = 1- \frac{1}{\eps^2\delta^2 n}$, each of these intervals contains at most $(1+\eps)\delta n$ vertices. So for any $\delta = \omega\big(\frac{1}{\sqrt{n}}\big)$, we can choose an appropriate $\eps = o(1)$ to conclude that with probability $1-o(1)$, each of the $\frac{1}{\delta}$ consecutive intervals contain at most $(1+o(1))\delta n$ vertices.
		
		Notice that any other interval of length $\delta$ in $[0,1]$ is contained entirely in at most two of these consecutive intervals. So with probability $1-o(1)$, any interval of length $\delta$ in $[0,1]$ will contain at most $(1+o(1))2\delta n$ vertices. Equivalently, and after a slight change of variables, with probability $1-o(1)$, every $\delta n$ vertices will occupy an interval of length at least $\frac{\delta}{2}(1-o(1))$.
	\end{proof}

	Now, we would like to be able to say something about the cliques in $W$-random graphs that we obtain by sampling points from smaller sets, for example, from neighborhoods around points at which $W= 1$. As in \cite{DHM17}, we define a \emph{subgraphon} of any graphon $W$ to be the restriction obtained by ``zooming in" on a subset of the sample space:
	
	\begin{definition}
		Given a graphon $W\colon[0,1]^2\to [0,1]$ and a	subset $A\subseteq [0,1]$ of positive measure, define the subgraphon $W|_{A\times A}\colon A^2\to [0,1]$ as the restriction of $W$ to $A\times A$, where we sample uniformly from the set $A$ to obtain a probability distribution on $A$.
	\end{definition}
	
	(Note that $W|_{A\times A}$ as defined above satisfies the definition of a graphon on a more general probability space $\Omega$.) Intuitively, if we break a graphon $W$ into subgraphons, its clique number will be at least the maximum clique number among the subgraphons and at most the sum of all their clique numbers. This intuition is formalized in the following lemma.
	
	\begin{lemma}\label{lem:SubCliqueN}
		Let $W\colon[0,1]^2\to [0,1]$ be a graphon, let $k\in\mathbb{N}$ be constant, and let $A_1,\dots, A_k\subseteq [0,1]$ be measurable sets depending on $n$ that partition $[0,1]$, where each $A_i=A_i(n)$ has measure $\lambda(A_i)= \omega\left(\frac{1}{n}\right)$. Then for each $i$, there exist $n_i^+,n_i^- = (1+o(1))\lambda(A_i)$, with $n_i^-\leq n_i^+$,	such that a.a.s., 
		\begin{enumerate}[(i)]
			\item\label{lem:SubCliqueUpperN}
			$
			\omega(\mathbb{G}(n,W))\leq    (1+o(1))\big[\omega(\mathbb{G}(n_1^+,W|_{A_1\times A_1})) + \cdots + \omega(\mathbb{G}(n_k^+,W|_{A_k\times A_k}))\big] ,
			$ and
			\item\label{lem:SubCliqueLowerN}
			$
			\omega(\mathbb{G}(n,W))\geq     (1+o(1))\cdot \omega(\mathbb{G}(n_i^-,W|_{A_i\times A_i}))
			$ for each $i\in\{1,\dots, k\}$.
		\end{enumerate}
	\end{lemma}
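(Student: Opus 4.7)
My plan is to reduce both bounds to facts about each of the subgraphons $W|_{A_i\times A_i}$ after first pinning down how many of the $n$ sampled vertices fall into each $A_i$. Writing $N_i$ for the (random) number of vertices in $A_i$, Lemma~\ref{lem:IntervalPointConcent} applied to each $A_i$ together with a union bound over the constant number $k$ of sets yields $N_i = (1+o(1))n\lambda(A_i)$ simultaneously for all $i$, with probability $1-o(1)$. Fixing any $\eps_n = o(1)$ that decays slowly enough to absorb these deviations, I would set
\[
n_i^- = \lfloor (1-\eps_n)n\lambda(A_i)\rfloor, \qquad n_i^+ = \lceil (1+\eps_n)n\lambda(A_i)\rceil,
\]
which are deterministic quantities of the required form $(1+o(1))n\lambda(A_i)$ and which, on an a.a.s.\ event $\mathcal{E}$, sandwich $N_i$ between them for every $i$.

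Next, I would realise everything on a common probability space so that the coupling used below is literal. For each $i$, draw $n_i^+$ i.i.d.\ uniform points from $A_i$ in advance; build $\mathbb{G}(n,W)$ by assigning to the vertices that land in $A_i$ the first $N_i$ of these points and generating edges in the usual way. The key distributional observation is that, conditionally on which $A_i$ each vertex falls into, the vertices inside $A_i$ are i.i.d.\ uniform on $A_i$ and edges appear independently with probability $W(x,y)$; hence the induced subgraph $G[V_i]$ of $\mathbb{G}(n,W)$ on the vertices in $A_i$ has the distribution of $\mathbb{G}(N_i, W|_{A_i\times A_i})$. Under the coupling just described, on the event $\mathcal{E}$, $G[V_i]$ is simultaneously an induced subgraph of $\mathbb{G}(n_i^+, W|_{A_i\times A_i})$ and a supergraph of $\mathbb{G}(n_i^-, W|_{A_i\times A_i})$.

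Both halves of the lemma then follow on $\mathcal{E}$. For (i), any clique $C$ of $\mathbb{G}(n,W)$ decomposes as $C = \bigsqcup_i (C\cap V_i)$ with each $C\cap V_i$ a clique in $G[V_i]$, so
\[
\omega(\mathbb{G}(n,W))\ \leq\ \sum_{i=1}^k \omega(G[V_i])\ \leq\ \sum_{i=1}^k \omega\big(\mathbb{G}(n_i^+, W|_{A_i\times A_i})\big),
\]
and the stated $(1+o(1))$ factor leaves more than enough slack. For (ii), any clique of $\mathbb{G}(n_i^-, W|_{A_i\times A_i})$ sits, under the coupling, inside $G[V_i]$ and hence inside $\mathbb{G}(n,W)$, giving $\omega(\mathbb{G}(n,W))\geq \omega(\mathbb{G}(n_i^-, W|_{A_i\times A_i}))$. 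The only genuine delicacy in the whole argument is arranging the coupling so that both inequalities hold pointwise on a single a.a.s.\ event rather than merely in distribution; beyond that bookkeeping, the clique decomposition is exact and the concentration of $N_i$ is already supplied by Lemma~\ref{lem:IntervalPointConcent}.
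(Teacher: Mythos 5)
Your construction is essentially the paper's own proof: fix the vertex counts $N_i$ via Lemma~\ref{lem:IntervalPointConcent} (with a union bound over the constantly many $A_i$), couple $\mathbb{G}(n,W)$ with the subgraphon random graphs so that on an a.a.s.\ event each induced graph $G[V_i]$ contains a copy of $\mathbb{G}(n_i^-,W|_{A_i\times A_i})$ and sits inside a copy of $\mathbb{G}(n_i^+,W|_{A_i\times A_i})$ (extra vertices drawn uniformly from $A_i$, extra edges with probability $W$), and then use the exact clique decomposition of any clique across the parts for (\ref{lem:SubCliqueUpperN}) and subgraph monotonicity for (\ref{lem:SubCliqueLowerN}). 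The one substantive point you omit is the step the paper puts first: the inequalities in the statement compare random graphs that are not a priori defined on a common probability space, and the lemma is later invoked in situations where these graphs are \emph{not} generated by your particular coupling. The paper handles this by appealing to Theorem~\ref{thm:DolezalConcentration}: since $\omega(\mathbb{G}(m,U))$ is a.a.s.\ $(1+o(1))$ times its expectation for any graphon $U$, establishing (\ref{lem:SubCliqueUpperN}) and (\ref{lem:SubCliqueLowerN}) for one specific coupling transfers to an arbitrary coupling at the cost of the $(1+o(1))$ factors appearing in the statement --- which is exactly why those factors are there, whereas in your argument they are ``slack'' with no role. Adding that transfer step (one sentence citing Theorem~\ref{thm:DolezalConcentration}) makes your proof match the paper's in full; without it, you have proved the inequalities only under your chosen realization, which is weaker than what the later applications of the lemma require.
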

	
	\begin{proof}
		We will show that (\ref{lem:SubCliqueUpperN}) and (\ref{lem:SubCliqueLowerN}) each hold for a specific coupling of $\mathbb{G}(n,W)$ with $\left(\mathbb{G}(n_1^-,W|_{A_1\times A_1}),\dots,\mathbb{G}(n_k^-,W|_{A_k\times A_k})\right)$. But before doing so, we briefly argue that this suffices to prove the lemma for any coupling. By Theorem~\ref{thm:DolezalConcentration} (proved in \cite{DHM17}), we know that the clique number for any graphon is highly concentrated: for any graphon $U$, with probability $1-o(1)$, we have $\omega(\mathbb{G}(n,U)) = (1+o(1)) \mathbb{E}(\omega(\mathbb{G}(n,U)))$. So if  (\ref{lem:SubCliqueUpperN}) or (\ref{lem:SubCliqueLowerN}) holds for any specific choice of coupling, then it will hold for all, since each term in (\ref{lem:SubCliqueUpperN}) and (\ref{lem:SubCliqueLowerN}) changes by at most a factor of $1+o(1)$ regardless of the choice of coupling. 
		
		With this in mind, we prove (\ref{lem:SubCliqueLowerN}). By Lemma~\ref{lem:IntervalPointConcent}, when sampling vertices of the $W$-random graph $\mathbb{G}(n,W)$, the number of vertices in the set $A_i$ for each $i$ will be at least $n_i^- = (1-o(1))\lambda(A_i)n$ a.a.s., for an appropriate $o(1)$ function. So there is a coupling of $\mathbb{G}(n,W)$ with $\left(\mathbb{G}(n_1^-,W|_{A_1\times A_1}),\dots,\mathbb{G}(n_k^-,W|_{A_k\times A_k})\right)$ such that a.a.s.\ each $\mathbb{G}(n_1^-,W|_{A_1\times A_1})$ is contained as a subgraph in $\mathbb{G}(n,W)$.     For this coupling, (\ref{lem:SubCliqueLowerN}) automatically holds. 
		
		Explicitly, the coupling is constructed as follows: for each $n$, we sample the $n$ vertices of $\mathbb{G}(n,W)$. With probability $1-o(1)$, there will be at least $n_i$ vertices sampled from each $A_i$ (note: since $k$ is constant, taking a union bound over all the $A_i$ does not change this). Assume we are in this case (else, generate the other graphs independently). To generate each $\mathbb{G}\left(n_i^-,W|_{A_i\times A_i}\right)$, since at least $n_i^-$ of the vertices of $\mathbb{G}(n,W)$ are in $A_i$, then uniformly sample exactly $n_i^-$ of them. The subgraph induced on these vertices has distribution $\mathbb{G}\left(n_i^-,W|_{A_i\times A_i}\right)$.  We place no additional edges. Note that we re-sample all the vertices for each $n$ to generate this coupling, as opposed to adding on a vertex to $\mathbb{G}(n-1,W)$ to generate $\mathbb{G}(n,W)$. For this coupling, (\ref{lem:SubCliqueLowerN}) holds; thus as argued above, (\ref{lem:SubCliqueLowerN}) holds in general.
		
		Now we show that (\ref{lem:SubCliqueUpperN}) also holds for a similar coupling. By Lemma~\ref{lem:IntervalPointConcent}, in $\mathbb{G}(n,W)$, the number of vertices in the set $A_i$ will be at most $n_i^+ = (1+o(1))\lambda(A_i)n$ for an appropriate $o(1)$ function. We couple $\mathbb{G}(n,W)$ with $\left(\mathbb{G}(n_1^+,W|_{A_1\times A_1}),\dots,\mathbb{G}(n_k^+,W|_{A_k\times A_k})\right)$ as follows: for each $n$, we sample $n$ vertices for $\mathbb{G}(n,W)$. With probability $1-o(1)$, there will be at most $n_i^+$ of them in each $A_i$. If this happens, then to generate each $\mathbb{G}\left(n_i^+,W|_{A_i\times A_i}\right)$, take these vertices, and add in enough extra vertices (uniformly sampled from $A_i$) to make exactly $n_i^+$ total vertices in $A_i$. On these $n_i^+$ vertices, place all edges belonging to the copy of $\mathbb{G}(n,W)$ that we have sampled, and add edges between a new vertex $v$ and any other vertex $w$ with probability $W(v,w)$. Now, the subgraph induced on these vertices has distribution $\mathbb{G}\left(n_i^+,W|_{A_i\times A_i}\right)$.
		And we see that the max clique of $\mathbb{G}(n,W)$ is, at very most, the union of the max cliques of the graphs $\mathbb{G}\left(n_i^+,W|_{A_i\times A_i}\right)$. So for this coupling, (\ref{lem:SubCliqueUpperN}) holds, and as a consequence, holds for any coupling.
	
	\end{proof}

	Note that in the previous lemma, the quantities $n_i^-$ and $n_i^+$ are functions only of $n$ and $\lambda(A_i)$, and not of the graphon $W$; we will use this fact in the proof of Lemma~\ref{lem:UpperDini}.
	
	We finish this section with two lemmas showing that the clique number of $\mathbb{G}(n,W)$ is determined up to lower-order terms (in the case where this clique number is $\omega(\log n)$) by the local behavior of $W$ near points $(a,a)$ where $W(a,a) = 1$. In particular, if $W$ is bounded above by $U$ locally near points where $W(a,a)=1$, the following lemma tells us that the clique numbers of $W$ and $U$ will satisfy the same inequality, up to lower-order terms. Note that in the proof of the lemma below, there is nothing special about using graphons $W$ and $U$ on $[0,1]^2$; in particular, we can take graphons on $A^2$ for any positive-measure subset $A\subseteq [0,1]$, as long as both graphons have the \emph{same} domain. We use this fact in the proof of Lemma~\ref{lem:UpperDini}.
	
	\begin{lemma}\label{lem:LocalDominance}
		Let $W,U:[0,1]\to[0,1]^2$ be graphons equal to 1 at some point $(a,a)$, and essentially bounded away from 1 in some neighborhood of $(x,x)$ for all other $x\in[0,1]^2$. If there exists some neighborhood $N$ of $(a,a)$ on which $W(x,y)\leq U(x,y)$, then a.a.s.,
		\[\omega(\mathbb{G}(n,W))\leq (1+o(1))\cdot \omega(\mathbb{G}(n,U))+O(\log n).\]
	\end{lemma}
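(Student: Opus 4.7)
The plan is to couple $\mathbb{G}(n,W)$ and $\mathbb{G}(n,U)$ on a single probability space and decompose a maximum clique according to whether its vertices lie near $(a,a)$ or far from it. First I choose $\delta>0$ small enough that $((a-\delta,a+\delta)\cap[0,1])^2\subseteq N$, and set $A_1 := (a-\delta,a+\delta)\cap[0,1]$ and $A_2 := [0,1]\setminus A_1$. The coupling is the standard monotone one: draw vertices $x_1,\dots,x_n$ from $[0,1]$ once, and independent uniforms $r_{ij}\in[0,1]$ for each pair, then put $ij\in E(\mathbb{G}(n,W))$ iff $r_{ij}\leq W(x_i,x_j)$ and analogously for $U$. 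Because $W\leq U$ on $A_1\times A_1$, every edge of $\mathbb{G}(n,W)$ with both endpoints in $A_1$ is automatically an edge of $\mathbb{G}(n,U)$.

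Now let $K$ be a maximum clique of $\mathbb{G}(n,W)$, and split $K = K_1\cup K_2$ according to whether each vertex lies in $A_1$ or $A_2$. Under the coupling, $K_1$ is a clique in $\mathbb{G}(n,U)$, so $|K_1|\leq \omega(\mathbb{G}(n,U))$. The set $K_2$ is a clique in the induced subgraph of $\mathbb{G}(n,W)$ on the vertices falling in $A_2$, which in distribution is $\mathbb{G}(n_2, W|_{A_2\times A_2})$ for some $n_2\leq n$. Since $a\notin A_2$, the subgraphon $W|_{A_2\times A_2}$ inherits from $W$ the hypothesis of Lemma~\ref{lem:TheImportanceOfBeingDiagonal}: its essential supremum is strictly less than $1$ in some neighborhood of each diagonal point $(x,x)$ with $x\in A_2$. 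Hence $|K_2| = O(\log n)$ a.a.s., and summing the two bounds yields $\omega(\mathbb{G}(n,W))\leq \omega(\mathbb{G}(n,U)) + O(\log n)$ on the coupled space, which is strictly stronger than the claimed $(1+o(1))\omega(\mathbb{G}(n,U))+O(\log n)$. Theorem~\ref{thm:DolezalConcentration} then lets one transfer the conclusion to the marginal distributions if needed.

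The one delicate step is the application of Lemma~\ref{lem:TheImportanceOfBeingDiagonal} to $W|_{A_2\times A_2}$, which is formally defined on $A_2^2$ with the uniform probability measure rather than on $[0,1]^2$. I would handle this either by pushing $W|_{A_2\times A_2}$ through a measure-preserving bijection $[0,1]\to A_2$ to produce an honest graphon on $[0,1]^2$ with the same diagonal hypothesis, or by observing that the proof of Lemma~\ref{lem:TheImportanceOfBeingDiagonal} rests on a compactness argument along the diagonal together with Theorem~\ref{thm:DolezalCliqueNum}, neither of which uses anything specific to the ambient interval $[0,1]$. This is really the only point at which something nontrivial is being invoked; everything else reduces to bookkeeping on the coupling.
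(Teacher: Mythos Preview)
Your proof is correct and follows essentially the same route as the paper's: both use a monotone coupling on shared vertex samples, restrict to an interval $A_1\subseteq N$ around $a$, split a maximum clique in $\mathbb{G}(n,W)$ into its $A_1$- and $A_2$-parts, bound the first by $\omega(\mathbb{G}(n,U))$ via the coupling and the second by $O(\log n)$ via Lemma~\ref{lem:TheImportanceOfBeingDiagonal}, and then invoke Theorem~\ref{thm:DolezalConcentration} to detach from the specific coupling. One small slip: the proof of Lemma~\ref{lem:TheImportanceOfBeingDiagonal} in the paper does not actually invoke Theorem~\ref{thm:DolezalCliqueNum} but rather a direct compactness-plus-pigeonhole-plus-union-bound argument; this does not affect your proposal, since (as the paper itself remarks) that argument only uses compactness of the diagonal and so extends to $A_2$ without trouble.
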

	
	\begin{proof}
		As in Lemma~\ref{lem:SubCliqueN}, we will show that the result holds for a specific coupling of $\mathbb{G}(n,W)$ and $\mathbb{G}(n,U)$, and as argued in the proof of Lemma~\ref{lem:SubCliqueN}, this will in fact suffice to prove it for any choice of coupling. We define our coupling as follows: first, sample the \emph{same} numbers $x_1,\dots, x_n$ for the vertices of both $\mathbb{G}(n,W)$ and $\mathbb{G}(n,U)$, and couple their edges in such a way that every pair of vertices $(i,j)$ with $(x_i,x_j)\in N$ is connected by an edge in $\mathbb{G}(n,W)$ only if it is also connected in $\mathbb{G}(n,U)$; this is possible because $W(x,y)\leq U(x,y)$ on the neighborhood $N$. The coupling of the remaining edges can be defined in any way (for concreteness, we may sample them independently for the two graphs). 
		 
		 Before proceeding further, we will restrict our view to a subset $I^2\subseteq N$, for some open interval $I$ containing $a$. Then we may define $n_{\text{in}}$ to be the (random) number of vertices $x_1,\dots x_n$ that are in the interval $I$, and consider the random graphs $\mathbb{G}(n_{\text{in}},W|_{I^2})$ and $\mathbb{G}(n_{\text{in}},U|_{I^2})$. We may generate them by taking the subgraphs of $\mathbb{G}(n,W)$ and $\mathbb{G}(n,U)$ respectively induced on these vertices (still using the coupling of $\mathbb{G}(n,W)$ and $\mathbb{G}(n,U)$ generated above). With this coupling, $\mathbb{G}(n_{\text{in}},W|_{I^2})$ is contained as a subgraph in $\mathbb{G}(n_{\text{in}},U|_{I^2})$; thus we may write
		\begin{equation}\label{eqn:BigCliqueIneq}
				\omega(\mathbb{G}(n_{\text{in}},W|_{I^2}))\leq\omega(\mathbb{G}(n_{\text{in}},U|_{I^2})).
		\end{equation}
		 And as $\mathbb{G}(n_{\text{in}},U|_{I^2})$ is a subgraph of $\mathbb{G}(n,U)$ in the coupling we have just defined, we may also write $\mathbb{G}(n_{\text{in}},U|_{I^2})\leq\mathbb{G}(n,U)$, or combining this with (\ref{eqn:BigCliqueIneq}),
		\begin{equation}\label{eqn:AllButLogClique}
				\omega(\mathbb{G}(n_{\text{in}},W|_{I^2}))\leq\omega(\mathbb{G}(n,U).
		\end{equation}		 
		 This is the essence of our proof; however, we still need to deal with all the vertices in $\mathbb{G}(n,W)$ that do not fall into the interval $I$, and ensure that they will not change the clique number of $\mathbb{G}(n,W)$ by too much. 
		 
		We deal with the remaining vertices as follows: let $n_\text{out}$ be the number of vertices not in $I$ (i.e., $n_{\text{out}}= n- n_\text{in}$). By the same reasoning just used, we may generate $\mathbb{G}(n_{\text{out}},W|_{([0,1]\setminus I)^2})$ as the subgraph of $\mathbb{G}(n,W)$ induced on the vertices counted by $n_\text{out}$.
		And given any partition of the vertices of a graph $G$, the clique number of $G$ is at most the sum of the clique numbers of the subgraphs induced on the parts of the partition. Here, given the partition of $[n]$ into $I$ and $[0,1]\setminus I$, this gives
		\begin{equation}\label{eqn:WholeCliqueIneq}
		 	\omega(\mathbb{G}(n,W))\leq \omega(\mathbb{G}(n_{\text{in}},W|_{I^2}))+\omega(\mathbb{G}(n_{\text{out}},W|_{([0,1]\setminus I)^2})).
		 \end{equation}
		 Combining this with (\ref{eqn:AllButLogClique}), we see that
		 \begin{equation}\label{eqn:almostThereClique}
		 	\omega(\mathbb{G}(n,W))\leq \omega(\mathbb{G}(n,U))+\omega(\mathbb{G}(n_{\text{out}},W|_{([0,1]\setminus I)^2})).
		 \end{equation}
		 Now, since the subgraphon $W|_{([0,1]\setminus I)^2}$ is essentially bounded away from 1 in some neighborhood of each $(x,x)$, we may apply Lemma~\ref{lem:TheImportanceOfBeingDiagonal} (proved below) to conclude that $\omega(\mathbb{G}(n_{\text{out}},W|_{([0,1]\setminus I)^2}))= O(\log(n_\text{out})) = O(\log n)$ a.a.s. (Note that we are taking some liberties in the application of Lemma~\ref{lem:TheImportanceOfBeingDiagonal}, in applying it to a graphon not defined on $[0,1]^2$, and in taking a random number of vertices; this can be justified formally, and does not significantly change the proof.) With this, (\ref{eqn:almostThereClique}) becomes
		 \[
		 			\omega(\mathbb{G}(n,W))\leq \omega(\mathbb{G}(n,U))+O(\log n).
		 \]
		 To finish, note that as shown in the proof of Lemma~\ref{lem:SubCliqueN}, for \emph{any} choice of coupling of $\mathbb{G}(n,W)$ and $\mathbb{G}(n,U)$, the clique numbers of each of these graphs will change by a factor of at most $1+o(1)$ a.a.s. Therefore, regardless of the choice of coupling,
		 \[
		 		\omega(\mathbb{G}(n,W))\leq (1+o(1))\omega(\mathbb{G}(n,U))+O(\log n)
		 \]
		 with probability $1-o(1)$, as desired.
	\end{proof}

	We end this section with a proof of Lemma~\ref{lem:TheImportanceOfBeingDiagonal}, restated here for the convenience of the reader.
	
	\DiagonalLemma*
	
	\begin{proof}[Proof of Lemma~\ref{lem:TheImportanceOfBeingDiagonal}]
		Associate to each point $(x,x)$ an open neighborhood $N(x,x)$ in $[0,1]^2$ on which the essential supremum of $W$ is $c(x) < 1$. These neighborhoods form an open cover of the (closed) diagonal line segment $D = \{(x,x): x\in[0,1]\}$. Because this set is compact, we may find an finite subcover of $D$ by neighborhoods $N(x,x)$. Taking the maximum essential supremum $c=c(x)$ of $W$ on any of these neighborhoods, we see that for some $\eps>0$, the essential supremum of $W$ is $c$ on the region $\{(x,y): |x-y|\leq \eps\}$.
		
		Now consider any $k$ vertices from $\mathbb{G}(n,W)$, and view them as points in $[0,1]$. By the pigeonhole principle, dividing $[0,1]$ into $1/\eps$ disjoint intervals of length $\eps$, of the $k$ points, there must be at least $\eps k$ points in some interval of length $\eps$. The probability that this subset forms a clique is at most $c^{\binom{\eps k}{2}}= c^{\Theta(k^2)}$, which also gives an upper bound on the probability that the original $k$ vertices formed a clique. So, taking a union bound, the probability that there exists any $k$-clique is at most
		\[
		\binom{n}{k}c^{\Theta(k^2)} \leq  \left(\frac{en}{(\frac{1}{c})^{\Theta(k)}\cdot k}\right)^k
		=\left(\frac{n}{(\frac{1}{c})^{\Theta(k)}}\right)^k.
		\]
		The cutoff at which this approaches zero is $k=\Theta(\log n)$. So for any graphon $W$ bounded away from 1 in some neighborhood of each point $(x,x)$, the clique number of $\mathbb{G}(n,W)$ is a.a.s.\ $O(\log n)$. 
	\end{proof}
	
	Notice that there was nothing special about the choice of $[0,1]^2$ in this result; the only property we used of the interval $[0,1]$ was its compactness. So in particular, Lemma~\ref{lem:TheImportanceOfBeingDiagonal} holds for a graphon defined on $A^2$ for any closed interval $A$, a fact that we will use several times throughout this paper.
	
	\section{$W$-random graphs with clique number $\Theta(\sqrt n)$}\label{sec:Sqrt}
	
	In this section, we prove Lemma~\ref{lem:Lipschitz} and Theorem~\ref{thm:DirectionalDerivativeCliqueNum}, which characterize a variety of $W$-random graphs with clique number $\Theta(\sqrt{n})$ in terms of the local behavior of $W$ at points where it is equal to $1$. We begin by finding the clique number of a specific family of random graphs; this will in fact suffice to prove both Lemma~\ref{lem:Lipschitz} and a more general result, of which Theorem~\ref{thm:DirectionalDerivativeCliqueNum} is a special case.
	
	\subsection{A family of examples with clique number $\Theta(\sqrt n)$}
	
	\begin{lemma}\label{lem:SqrtFamily}
		For any $r>0$, define the graphon $W_r(x,y) = (1-x)^r(1-y)^r$. The clique number of $\mathbb{G}(n,W_r)$ is a.a.s.\ $\Theta(\sqrt{n})$.
	\end{lemma}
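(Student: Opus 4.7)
For the upper bound, the plan is to apply the first moment method and exploit the rank-$1$ structure of $W_r$. Since $\prod_{i<j}W_r(x_i,x_j) = \prod_{i=1}^k(1-x_i)^{r(k-1)}$, the integral over $[0,1]^k$ factors and
\[
\mathbb{E}\!\left[\#\text{$k$-cliques in }\mathbb{G}(n,W_r)\right] \;=\; \binom{n}{k}\!\left(\int_0^1(1-x)^{r(k-1)}\,dx\right)^{\!k} \;=\; \frac{\binom{n}{k}}{(r(k-1)+1)^{k}}.
\]
Using $\binom{n}{k}\leq (en/k)^k$ and $k(r(k-1)+1)\sim rk^2$, this expectation tends to $0$ as soon as $k\geq (1+\varepsilon)\sqrt{en/r}$, so Markov's inequality yields $\omega(\mathbb{G}(n,W_r)) = O(\sqrt n)$ a.a.s.

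For the lower bound---the more delicate direction, since (as noted in the introduction) the second moment method is inadequate for these graphs---the plan is to localize near the unique point $(0,0)$ at which $W_r=1$. Fix a small constant $C>0$ and set $A = [0,C/\sqrt n]$. Lemma~\ref{lem:IntervalPointConcent} gives that $A$ contains $M = (1+o(1))C\sqrt n$ vertices a.a.s. For any two vertices $i,j$ with $x_i,x_j\in A$ we have $W_r(x_i,x_j)\geq (1-C/\sqrt n)^{2r}\geq 1-(2rC/\sqrt n)(1+o(1))$, so, conditionally on the positions, the non-edges within $A$ are independent Bernoullis of probability at most $q := (2rC/\sqrt n)(1+o(1))$. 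The expected number of non-edges is $\binom{M}{2}q = O(\sqrt n)\to\infty$, so a Chernoff bound applied to these independent indicators gives concentration with $o(1)$ failure probability. Thus a.a.s.\ the complement of the induced subgraph on $A$ has at most $2\binom{M}{2}q$ edges, i.e.\ average degree at most $\bar d := 4rC^2 + o(1)$, and the Caro--Wei inequality (combined with the convexity of $x\mapsto 1/(x+1)$) produces an independent set in the complement of size at least $M/(1+\bar d) = \Omega(\sqrt n)$---that is, a clique of this size in $\mathbb{G}(n,W_r)$.

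The main obstacle is the interplay of the two layers of randomness: the positions of the $M$ vertices in $A$ and, given those, the edges between them. One must verify that the Chernoff step, applied after conditioning on the positions, still contributes only $o(1)$ to the overall failure probability; this hinges on $\binom{M}{2}q\to\infty$ despite each individual non-edge probability $q$ tending to $0$. I also note the more naive lower-bound attempt---just taking the $k$ vertices of smallest $x$-coordinate and asking that they form a clique outright---only succeeds up to $k=O(n^{1/3})$, so the detour through the dense subgraph on $A$ followed by Caro--Wei is genuinely needed. A secondary, cosmetic issue is that the constants produced by the upper and lower arguments will not agree (the upper uses $\sqrt{e/r}$; the Caro--Wei argument is optimized at $C=1/(2\sqrt r)$, giving $1/(4\sqrt r)$), but this is immaterial for the $\Theta(\sqrt n)$ conclusion.
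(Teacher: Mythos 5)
Your proof is correct, and both halves follow the paper's strategy: the upper bound is the paper's first-moment computation verbatim (exploiting the rank-one factorization to get $\binom{n}{k}(r(k-1)+1)^{-k}$ and the threshold $\sqrt{en/r}$), and the lower bound rests on the same key idea of localizing to the $\Theta(\sqrt n)$ vertices within $O(1/\sqrt n)$ of the point $(0,0)$, where the induced subgraph is missing only $O(\sqrt n)$ edges. The only divergence is the final extraction step: the paper bounds the probability of having $k=\tfrac12 s\sqrt n$ or more missing edges by a union bound over all sets of $k$ potential non-edges and then deletes one endpoint per missing edge, which forces the localization constant to be small ($s<1/\sqrt{2er}$); you instead control the non-edge count by a Chernoff bound (valid conditionally on the vertex positions, with mean $\Theta(\sqrt n)\to\infty$, so the two layers of randomness are handled correctly) and then apply Caro--Wei to the non-edge graph. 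Your variant works for any constant $C$, and in fact yields a slightly better constant ($\tfrac14\sqrt{n/r}$ versus the paper's $\sqrt{n/(12er)}$), at the mild cost of invoking Chernoff and Caro--Wei where the paper's argument is a self-contained union bound; neither difference affects the $\Theta(\sqrt n)$ conclusion.
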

	
	To prove Lemma~\ref{lem:SqrtFamily}, we begin by finding an upper bound on $\omega(\mathbb{G}(n,W_r))$; we will use the first moment method.
	
	\begin{lemma}\label{lem:SqrtUpperBd}
		The clique number of $\mathbb{G}(n,W_r)$ is a.a.s.\ at most $(1+o(1))\left(\frac{e}{r}\right)^{1/2}\cdot \sqrt{n}$.
	\end{lemma}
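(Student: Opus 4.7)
The plan is to apply the first moment method: compute the expected number $\mathbb{E}[X_k]$ of $k$-cliques in $\mathbb{G}(n,W_r)$, show that it tends to $0$ when $k > (1+\eps)(e/r)^{1/2}\sqrt{n}$, and conclude by Markov's inequality that a.a.s.\ no such clique exists.

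The key observation is that the graphon $W_r$ factors as a product $W_r(x,y) = f(x)f(y)$ with $f(x) = (1-x)^r$, which makes the relevant integral completely tractable. Specifically, conditioning on the (uniformly sampled) locations $x_1,\dots,x_k$ of a candidate $k$-clique, the probability that they form a clique is
\[
\prod_{1\le i<j\le k} W_r(x_i,x_j) \;=\; \prod_{i=1}^{k}(1-x_i)^{r(k-1)},
\]
so the integrand separates and
\[
\mathbb{E}[X_k] \;=\; \binom{n}{k}\left(\int_0^1 (1-x)^{r(k-1)}\,dx\right)^{\!k}
\;=\; \binom{n}{k}\,\frac{1}{\bigl(r(k-1)+1\bigr)^{k}}.
\]

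Next I would bound $\binom{n}{k}\le (en/k)^k$ to get $\mathbb{E}[X_k] \le \bigl(\tfrac{en}{k(r(k-1)+1)}\bigr)^{k}$. For $k = (1+\eps)(e/r)^{1/2}\sqrt{n}$, the quantity $k(r(k-1)+1)$ is $(1+\eps)^2 \cdot en \cdot (1+o(1))$, so the base is $(1+\eps)^{-2}(1+o(1)) < 1$, and $\mathbb{E}[X_k]\to 0$ (in fact geometrically in $k$). Markov's inequality then gives $\Pr[X_k\ge 1]\to 0$, so a.a.s.\ $\omega(\mathbb{G}(n,W_r)) < k$, which is the stated bound after letting $\eps\to 0$ along an appropriate sequence.

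There is no real obstacle here; the only mild subtlety is handling the lower-order terms so that the upper bound comes out to $(1+o(1))(e/r)^{1/2}\sqrt{n}$ rather than something weaker. This amounts to choosing $k = \lceil (1+\eps_n)(e/r)^{1/2}\sqrt{n}\rceil$ for a slowly-vanishing $\eps_n$ and checking that $\mathbb{E}[X_k]$ is still $o(1)$, which follows because $\mathbb{E}[X_k]$ decays like a geometric series in $k$ once $k$ exceeds the threshold. The factorization of $W_r$ is what makes this bound clean; the matching lower bound, which will presumably require more care, is presumably deferred to a subsequent lemma.
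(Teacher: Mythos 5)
Your proposal is correct and follows essentially the same route as the paper: the same first-moment computation using the rank-one factorization $W_r(x,y)=(1-x)^r(1-y)^r$ to get $\mathbb{E}[X_k]=\binom{n}{k}\bigl(r(k-1)+1\bigr)^{-k}$, the same binomial-coefficient estimate, and Markov's inequality at the threshold $k\sim(e/r)^{1/2}\sqrt{n}$. The only difference is cosmetic: you make the $(1+\eps_n)$ bookkeeping explicit, whereas the paper simply identifies the cutoff where $\mathbb{E}[X_k]$ switches from tending to infinity to tending to zero.
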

	
	\begin{proof}
		Write $X_k$ for the number of cliques of size $k$ in $\mathbb{G}(n,W_r)$. By Markov's inequality, $\omega(\mathbb{G}(n,W_r))$ is a.a.s.\ bounded above by any $k$ for which $\mathbb{E}[X_k]$ is asymptotically $0$. And for any $k$, writing $d\vec{x}$ in place of $dx_1\cdots dx_k$, we have
		\begin{align*}
		\mathbb{E}[X_k] &= \binom{n}{k}\int_{[0,1]^{k}}\prod_{\ell\neq m\in[k]}W(x_\ell,x_m)\ d\vec{x}\\
		&= \binom{n}{k}\int_{[0,1]^{k}}\prod_{\ell\neq m\in[k]}(1-x_\ell)^r\cdot (1-x_m)^r\ d\vec{x}\\
		& = \binom{n}{k}\left(\int_0^1 (1-x)^{r(k-1)}dx\right)^k\\
		& = \binom{n}{k}\left(\frac{1}{r(k-1)+1}\right)^k.
		\end{align*}
		For any $k$ that is $\omega(1)$, we have $\frac{1}{r(k-1)+1}= (1+o(1))\frac{1}{rk}$. And for any $k$ that is $\omega(1)$ but sublinear, it can be shown from Stirling's formula that $\binom{n}{k} = \left(\frac{en}{k}(1-o(1))\right)^k$. Therefore the above expression becomes
		\begin{align*}
		\mathbb{E}[X_k] = \left(\frac{en}{k}(1-o(1))\right)^k\left((1+o(1))\frac{1}{rk}\right)^k
		&=\left(\frac{en}{rk^2}(1-o(1))\right)^k.
		\end{align*}
		So the cutoff at which $\mathbb{E}[X_k]$ goes from asymptotically 0 to asymptotically infinity is when $k\sim\left(\frac{e}{r}\right)^{1/2}\cdot \sqrt{n}$, which, by Markov's inequality, gives an upper bound on the clique number of $\mathbb{G}(n,W_r)$ that will hold with probability $1-o(1)$.
	\end{proof}
	Ideally, we would like to prove a matching lower bound. However, such a bound may be difficult to establish, or even untrue, as the variance of the number of cliques in $\mathbb{G}(n,W_r)$ of any size of order $\Theta(\sqrt{n})$ is quite large (Lemma~\ref{cor:BigVarSqrtAndPoly} (\ref{cor:BigVarSqrt}) in the appendix). In particular, this means we cannot use the second moment method directly to prove a lower bound on the clique number $\omega(\mathbb{G}(n,W_r))$. (This argument is fleshed out more fully in the appendix.) These difficulties notwithstanding, we can at least prove a lower bound that matches up to a constant.
	
	\begin{lemma}\label{lem:SqrtLowerBd}
		The clique number of $\mathbb{G}(n,W_r)$
		is a.a.s.\ at least $ \left(\frac{1}{12er}\right)^{1/2}\cdot \sqrt{n}$.
	\end{lemma}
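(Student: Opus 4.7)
The plan is to locate a clique inside a small neighborhood of the point $(0,0)$, where $W_r = 1$, and reduce the lower bound to a calculation on a very dense Erd\H{o}s--R\'enyi graph that the induced subgraph stochastically dominates.

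Specifically, fix a constant $\alpha > 0$ (to be optimized) and set $\epsilon = \alpha/\sqrt{n}$. By Lemma~\ref{lem:IntervalPointConcent}, with probability $1-o(1)$ the number $m$ of vertices of $\mathbb{G}(n,W_r)$ falling in $[0,\epsilon]$ satisfies $m = (1+o(1))\alpha\sqrt{n}$. On $[0,\epsilon]^2$ we have the pointwise bound $W_r(x,y) = (1-x)^r (1-y)^r \geq (1-\epsilon)^{2r}$, so the edges of the induced subgraph on these $m$ vertices can be coupled to contain an Erd\H{o}s--R\'enyi graph $G(m, (1-\epsilon)^{2r})$ as a subgraph. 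It therefore suffices to show that $G(m, (1-\epsilon)^{2r})$ contains a clique of size $k = \lceil (12er)^{-1/2}\sqrt{n}\rceil$ asymptotically almost surely.

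A first-moment calculation analogous to that of Lemma~\ref{lem:SqrtUpperBd}, applied now to $G(m, (1-\epsilon)^{2r})$, shows that $\mathbb{E}[X_k] = \binom{m}{k}(1-\epsilon)^{2r\binom{k}{2}}$ grows at least exponentially in $\sqrt{n}$ for suitably chosen $\alpha$. The target constant $(12er)^{-1/2}$ is deliberately well below the first-moment threshold $(er)^{-1/2}$ coming from the upper bound, which provides slack for the lower-bound argument.

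The main obstacle is converting this expectation into an a.a.s.\ existence statement. As highlighted immediately before the lemma, the variance of the count of $\Theta(\sqrt{n})$-cliques in $\mathbb{G}(n,W_r)$ is too large for a direct application of the second moment method, and an analogous difficulty persists for $G(m,(1-\epsilon)^{2r})$ because $k$ is comparable to $m$ and the factor $(1-\epsilon)^{-2r\binom{j}{2}}$ governing the variance contribution from pairs of cliques sharing $j$ vertices blows up for $j$ near the mean of the hypergeometric overlap distribution. To bypass this, I would bypass the second moment entirely and build the clique explicitly: process the $m$ vertices of $[0,\epsilon]$ in increasing order of $x$-coordinate, and greedily include each candidate into a partial clique $C$ whenever it is adjacent to every vertex already in $C$. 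At step $j$, the conditional inclusion probability is at least $(1-\epsilon)^{2r|C|}$, with a much larger bound available once we use the actual small $x$-coordinates of the vertices already selected. The conservative constant $(12er)^{-1/2}$, together with a Chernoff/martingale concentration estimate on the number of successful greedy additions, should show that this procedure produces a clique of size $(12er)^{-1/2}\sqrt{n}$ with probability $1-o(1)$, completing the proof.
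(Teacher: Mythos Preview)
Your setup---restricting to vertices in $[0,\alpha/\sqrt n]$ and coupling the induced subgraph to $G(m,(1-\epsilon)^{2r})$---matches the paper's starting point exactly. The gap is everything after ``I would bypass the second moment entirely'': you sketch a greedy clique-building process and assert that a Chernoff or martingale bound ``should show'' it reaches size $(12er)^{-1/2}\sqrt n$, but none of that is carried out. You have not chosen $\alpha$, not computed the expected greedy clique size in the regime $m\sim\alpha\sqrt n$, $1-p\sim 2r\alpha/\sqrt n$, and not supplied any concentration argument. The approach can in fact be pushed through (the waiting times between successive greedy additions are independent geometrics whose parameters stay bounded away from $0$, so the sum has standard deviation $O(n^{1/4})$ against a mean of order $\sqrt n$), but as written this is a plan, not a proof, and in particular nothing ties the output to the specific constant $(12er)^{-1/2}$.

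The paper sidesteps all of this with a much more elementary device. Having located $s\sqrt n$ vertices in $[0,\,t/\sqrt n]$ with $t=(1+o(1))s$, it bounds the probability that the induced subgraph is missing $k$ or more edges: any fixed $k$-set of potential edges is simultaneously absent with probability at most $\bigl(2rt\,n^{-1/2}(1+o(1))\bigr)^{k}$, and a union bound over the $\binom{\binom{s\sqrt n}{2}}{k}$ such sets gives
\[
\Pr[\text{at least $k$ edges missing}]\ \le\ \Bigl(\tfrac{ert\,s^{2}\sqrt n}{k}(1+o(1))\Bigr)^{k}.
\]
Taking $k=\tfrac12 s\sqrt n$ and $s=(3er)^{-1/2}$ makes this $o(1)$; then deleting one endpoint from each of the at most $k$ missing edges leaves a clique of size $s\sqrt n-k=\tfrac12 s\sqrt n=(12er)^{-1/2}\sqrt n$. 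This ``remove one vertex per non-edge'' trick replaces your unexecuted concentration step with a single union bound and produces the stated constant directly.
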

	
	\begin{proof} Our strategy is to directly compute a lower bound on the expected clique number for the graphon $W_r(x,y) = (1-x)^r(1-y)^r$ by guessing which vertices are most likely to form a large clique and showing that this does indeed happen with high probability. Suppose that for some constants $s$ and $t$, there are $sn^{1/2}$ vertices less than $tn^{-1/2}$ in $\mathbb{G}(n,W_r)$. (Note: the expected number of vertices less than $tn^{-1/2}$ is $tn^{1/2}$.) By Lemma~\ref{lem:IntervalPointConcent}, this will happen a.a.s.\ for some $t = (1+o(1))s$. We will show, for an appropriate choice of $s$, that if we do have such vertices, then a.a.s., the subgraph they induce will contain all but $k$ possible edges (for some appropriate choice of $k$ dependent on $n$). In total, this will show that the clique number is a.a.s.\ at least $s\sqrt{n}-k$, obtained by greedily deleting one vertex from each of the (up to) $k$ missing edges. 
		
	Concretely, for any constants $s$ and $t$, suppose that $\mathbb{G}(n,W_r)$ has $sn^{1/2}$ vertices less than $tn^{-1/2}$. The probability that any fixed set of $k$ potential edges is missing from the subgraph of $\mathbb{G}(n,W_r)$ induced on those vertices is at most
		\begin{align*}
		\prod_{k \text{ edges}}\left[1-(1-tn^{-1/2})^r(1-tn^{-1/2})^r\right]
		&= \left[1-(1-tn^{-1/2})^{2r}\right]^k\\
		&= \left[1-\left(1-2rtn^{-1/2}\cdot(1+o(1))\right)\right]^k,
		\end{align*}
		where the last equality follows by taking a binomial series expansion. 
		Simplifying this expression slightly, the probability that any fixed set of $k$ edges is missing is at most
		\begin{align*}
		\left[2rtn^{-1/2}\cdot(1+o(1))\right]^k .
		\end{align*}
		Now to bound the probability that there are $k$ or more edges missing, we take a union bound over all sets of $k$ possible edges in the subgraph induced on the $s\sqrt{n}$ vertices under consideration. The number of such sets is
		\begin{align*}
		\binom{\binom{s\sqrt{n}}{2}}{k}\leq  \binom{s^2n/2}{k} \leq \left(\frac{es^2n/2}{k}\right)^k.
		\end{align*}
		So in total, the probability to have $k$ or more missing edges is at most
		\begin{align*}
		\left(\frac{es^2n/2}{k}\right)^k \left(2rtn^{-1/2}\cdot(1+o(1))\right)^k
		& =  \left(\frac{erts^2\sqrt{n}}{k}\cdot (1+o(1))\right)^k.
		\end{align*}
		
		As argued above, for any constant $s$, with probability $1-o(1)$, there is a set of $s\sqrt{n}$ vertices less than $tn^{-1/2}$, for some $t = (1+o(1))s$. Given such a set, as just shown, the probability that the induced subgraph on these vertices is missing $k$ or more edges is at most $\left(\frac{erts^2\sqrt{n}}{k}\cdot (1+o(1))\right)^k$. Thus if this quantity is $o(1)$, then a.a.s.\ there is a clique of size at least $s\sqrt{n} - k$ in $\mathbb{G}(n,W_r)$, obtained by deleting one vertex from each missing edge. If we choose $k$ to be, for example, $\frac{1}{2}s\sqrt{n}$, then
		\[
		\left(\frac{erts^2\sqrt{n}}{k}\cdot (1+o(1))\right)^k = \left({2ers^2}\cdot (1+o(1))\right)^{\frac{1}{2}s\sqrt{n}}.
		\]
		This will be $o(1)$ as long as ${2ers^2} = 1-\Omega(1)$, or equivalently, $s^2= \frac{1-\Omega(1)}{2er}$. Taking any constant $s<\frac{1}{\sqrt{2er}}$ suffices, for instance $s = \frac{1}{\sqrt{3er}}$. Therefore, asymptotically almost surely, there will exist a clique of size at least $s\sqrt{n} - k  = \frac{1}{2}\cdot \frac{1}{\sqrt{3er}}\sqrt{n} = \left(\frac{1}{12er}\right)^{1/2}\cdot \sqrt{n}$. 
	\end{proof}
	
	Note that the bound in Lemma~\ref{lem:PolyLowerBd} can be tightened by optimizing the choice of $k$ in the proof above, but not to the point of matching the upper bound given in Lemma~\ref{lem:SqrtUpperBd}. Together, the upper and lower bounds in Lemmas \ref{lem:SqrtUpperBd} and \ref{lem:SqrtLowerBd} imply the $\Theta(\sqrt n)$ bound given in Lemma~\ref{lem:SqrtFamily} for $\omega(\mathbb{G}(n,W_r))$.
	
	\subsection{More general $W$-random graphs with clique number $\Theta(\sqrt n)$}\mbox{}\\

	We are now nearly ready to prove the main results of this section, Lemma~\ref{lem:Lipschitz} and Theorem~\ref{thm:DirectionalDerivativeCliqueNum}. We will reframe both results in a slightly broader setting and prove a more general version of Theorem~\ref{thm:DirectionalDerivativeCliqueNum}. This theorem characterizes the clique number of a $W$-random graph when $W$ has moderate directional derivatives at the points where it is equal to 1. However, even if the directional derivatives of a graphon $W$ do not exist at a given point, we can still have some notion of ``bounded derivatives" by looking at the limsup and the liminf of the difference quotient that defines the derivative.
	
	\begin{definition}\label{def:DiniDer} For a function $W\colon\mathbb{R}^k\to \mathbb{R}$, a point $x\in\mathbb{R}^k$, and a unit direction vector $d\in\mathbb{R}^k$, the \emph{upper Dini derivative} of $W$ at $x$ in direction $d$ is defined as
		\[
		W_+'(x,{d}) = \limsup_{h\to 0^+}\frac{W(x+h{d})-W({x})}{h}.
		\]
		The \emph{lower Dini derivative} of $W$ at $x$ in direction $d$ is
		\[
		W_-'(x,{d}) = \liminf_{h\to 0^+}\frac{W(x+h{d})-W({x})}{h}.
		\]
	\end{definition}
	
	Notice that if any directional derivative of a graphon $W$ exists, then it is equal to both the corresponding upper and lower Dini derivatives. Also, we have defined Dini derivatives only in directions corresponding to unit vectors; this is not necessary, but it makes several of the results and their proofs below slightly neater. We will use these definitions throughout the remainder of this section. We now show that a bound on the lower Dini derivatives of a graphon $W$ at a point $(a,a)$ with $W(a,a)=1$ provides a lower bound on the clique number of $\omega(\mathbb{G}(n,W))$.
	
	\begin{lemma}\label{lem:LowerDini}
		Let $W\colon[0,1]^2\to [0,1]$ be a graphon equal to 1 at some point $(a,a)$.
		\begin{enumerate}[(i)]
			\item If all lower Dini derivatives of $W$ at $(a,a)$ are bounded below by $-c$ for some constant $c\geq0$, then $\omega(\mathbb{G}(n,W)) = \Omega(\sqrt{n})$ a.a.s.
			\label{lem:LowerDiniNonzero}
			\item If all directional derivatives of $W$ at $(a,a)$ exist and are equal to zero, then $\omega(\mathbb{G}(n,W)) = \omega(\sqrt{n})$ a.a.s.\label{lem:LowerDiniZero}
		\end{enumerate}
	\end{lemma}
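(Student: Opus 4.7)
The plan is to adapt the proof of Lemma~\ref{lem:SqrtLowerBd} to the more general Dini hypothesis: sample $\Theta(\sqrt{n})$ vertices in a small neighborhood of $a$, control the expected number of missing edges among them, and use a greedy vertex-deletion argument to produce a clique of size $\Omega(\sqrt{n})$.

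For part (i), assume without loss of generality that $a\in(0,1)$ (the boundary cases $a\in\{0,1\}$ reduce to one-sided intervals, with essentially the same argument). Fix small constants $s,t,\epsilon>0$ and set $I_n=[a-tn^{-1/2},a+tn^{-1/2}]$. By Lemma~\ref{lem:IntervalPointConcent}, a.a.s.\ at least $sn^{1/2}$ vertices of $\mathbb{G}(n,W)$ lie in $I_n$. For any pair $(x,y)\in I_n^2$, writing $(x,y)=(a,a)+h(\cos\theta,\sin\theta)$, one has $h\leq\sqrt{2}\,tn^{-1/2}$, and the lower Dini derivative hypothesis gives $W(x,y)\geq 1-(c+\epsilon)h$ for each fixed direction $\theta$ provided $h<h^{*}(\theta,\epsilon)$ for some direction-dependent threshold $h^{*}(\theta,\epsilon)>0$. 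If this bound were uniform in $\theta$ for all $h$ below a single threshold, then for $n$ sufficiently large the missing-edge probability of every pair in $I_n^2$ would be at most $(c+\epsilon)\sqrt{2}\,tn^{-1/2}$, and the union-bound and deletion argument from Lemma~\ref{lem:SqrtLowerBd} would go through essentially verbatim (with the constant $2r$ replaced by $\sqrt{2}(c+\epsilon)$) to produce the desired clique.

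The main obstacle is exactly this uniformity: the threshold $h^{*}(\theta,\epsilon)$ need not be bounded below on the unit circle $S^{1}$. The plan is to recover a \emph{mostly uniform} bound via Egorov's theorem. Define $g_M(\theta)=\inf_{0<h<1/M}[W((a,a)+h(\cos\theta,\sin\theta))-1]/h$; this sequence is measurable, increasing in $M$, and converges pointwise to a limit $\geq -c$, so for every $\eta>0$ there exist $M$ and a measurable $A\subseteq S^{1}$ with $\lambda(A)\geq 2\pi-\eta$ on which $W((a,a)+h(\cos\theta,\sin\theta))\geq 1-(c+\epsilon)h$ uniformly for $\theta\in A$ and $0<h<1/M$. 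Since the angle distribution induced by uniform sampling of pairs in $I_n^2$ has density bounded with respect to Lebesgue on $S^{1}$, only an $O(\eta)$ fraction of pairs lands at ``bad'' angles. The delicate step is to choose $\eta=\eta(n)\to 0$ slowly enough that simultaneously $1/M(\eta)\gg\sqrt{2}\,tn^{-1/2}$ and the total contribution of bad pairs to the missing-edge count remains $o(\sqrt{n})$; as a complementary tool I expect to use a direct integration argument showing $\int_{B((a,a),R)}(1-W)\,dA=o(R^{2})$ as $R\to 0$ (via dominated convergence applied to polar coordinates), which controls the aggregate bad-pair contribution without requiring rates on $M(\eta)$.

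Part (ii) then follows from part (i) by choosing $c$ arbitrarily small. The hypothesis that all directional derivatives vanish at $(a,a)$ is precisely the statement that the lower Dini bound $\geq-c$ holds for every $c>0$. Tracking the dependence on $c$ through the proof of part (i) (and mirroring the $r^{-1/2}$ scaling visible in Lemma~\ref{lem:SqrtLowerBd}, where $c\sim r\sqrt{2}$ for the family $W_r$) shows that the resulting clique lower bound takes the form $C(c)\sqrt{n}$ with $C(c)\to\infty$ as $c\to 0$. Since $c$ can be taken as small as we wish, this yields $\omega(\mathbb{G}(n,W))=\omega(\sqrt{n})$ a.a.s.
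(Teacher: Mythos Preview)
Your route is genuinely different from the paper's. Rather than re-running the union-bound and vertex-deletion argument of Lemma~\ref{lem:SqrtLowerBd} on $W$ directly, the paper reduces to that lemma by a comparison: taking $r=\sqrt{2}(c+\eps)$, the directional derivatives of $W_r(x,y)=(1-x)^r(1-y)^r$ at the origin are all at most $-(c+\eps)$, so (after translation) the paper asserts that $W\ge W_r$ on some neighborhood of $(a,a)$; Lemma~\ref{lem:LocalDominance} then yields $\omega(\mathbb{G}(n,W))\ge(1-o(1))\,\omega(\mathbb{G}(n,W_r))-O(\log n)$, and Lemma~\ref{lem:SqrtLowerBd} supplies the constant. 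Part~(ii) is identical, letting $r\to 0$ and noting that the lower bound $(12er)^{-1/2}\sqrt{n}$ then blows up. This is much shorter and more modular than your plan: no Egorov, no edge-counting on $W$ itself, and part~(ii) falls out with no extra work beyond varying one parameter.

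That said, the uniformity obstacle you isolate is real, and it is not special to your approach: the paper's assertion that ``$W$ is bounded below by $W_r$ in some neighborhood'' rests on exactly the same direction-uniform threshold $h^*(\theta,\eps)$, and the paper does not justify it. Your Egorov patch is the right instinct but, as you already suspect, the ``delicate step'' is genuinely problematic: Egorov gives no control on $M(\eta)$, so there is no reason you can send $\eta(n)\to 0$ while keeping $1/M(\eta)\gg n^{-1/2}$. And your complementary integral bound $\int_{B((a,a),R)}(1-W)\,dA=o(R^{2})$ (which does follow from dominated convergence in polar coordinates) is too weak to feed into the deletion argument: it gives an expected missing-edge count of $o(n)$ among the $\Theta(\sqrt{n})$ candidate vertices, whereas you need $O(\sqrt{n})$ with a controllable constant, i.e.\ essentially $\int_{B(R)}(1-W)=O(R^{3})$. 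So neither your argument nor the paper's, as written, fully closes this gap; the paper simply packages it inside a single comparison sentence rather than confronting it.
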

	
	Before proving Lemma~\ref{lem:LowerDini}, we quickly show how it implies Lemma~\ref{lem:Lipschitz}, which is restated here for the convenience of the reader.
	
	\firstLipschitz*
	
	\begin{proof}[Proof of Lemma~\ref{lem:Lipschitz}]
		We begin by recalling a definition: $W$ is locally Lipschitz at the point $(a,a)$ if there exists a neighborhood $U$ of $(a,a)$ and a constant $c\geq 0$ such that for all points $(x,y)$ in $U$,
		\[
		|W(x,y)-W(a,a)|\leq c\cdot\|(x,y)-(a,a)\|.
		\]
		where in the line above, $\|\cdot\|$ represents the $\ell^2$ norm. (Note that any other norm would produce an equivalent definition, as all norms on $\mathbb{R}^2$ are equivalent up to a constant.) If $W$ satisfies this condition, and if $W(a,a) = 1$, then since $W(x,y)\leq 1$, the inequality above becomes
		\[
		W(x,y)-W(a,a)\geq -c\cdot \|(x,y)-(a,a)\|.
		\]
		Now for any $(x,y)\in U$, write $(x,y)-(a,a) = hd$ for a unit direction vector $d$; with this substitution, the inequality above is equivalent to
		\begin{equation}\label{eqn:DiniDiffQuotient}
				\frac{W((a,a)+hd)-W(a,a)}{h}\geq -c.
		\end{equation}
		Indeed, for any unit direction vector $d$, and for $h$ sufficiently small, the point $(a,a)+hd$ will be in the neighborhood $U$, and this inequality will hold. Thus, taking a liminf of inequality (\ref{eqn:DiniDiffQuotient}) for each $d$ as $h\to 0^+$, we see that by definition, all the lower Dini derivatives of $W$ at $(a,a)$ are at least $-c$. Then Lemma~\ref{lem:Lipschitz} follows immediately from Lemma~\ref{lem:LowerDini} (\ref{lem:LowerDiniNonzero}).
	\end{proof}
	We now give the proof of Lemma~\ref{lem:LowerDini}.
	
	\begin{proof}[Proof of Lemma~\ref{lem:LowerDini}]		
		We begin with part (\ref{lem:LowerDiniNonzero}). Roughly, our proof strategy will be to locally bound $W$ from below by a graphon in the family $\{W_r\}_{r\in\mathbb{R}^+}$, thereby bounding the clique number of $\mathbb{G}(n,W)$ from below by $\omega(\mathbb{G}(n,W_r)) = \Theta(\sqrt{n})$.
		
		Take any constant $\eps>0$, and notice that the graphon
		\[
		W_r(x,y) = W_{\sqrt{2}(c+\eps)}(x,y) = (1-x)^{\sqrt{2}(c+\eps)}(1-y)^{\sqrt{2}(c+\eps)}
		\]
		has directional derivatives at most $-\frac{r}{\sqrt{2}}=-(c+\eps)$ at $(0,0)$, achieved in the direction $\big(\frac{1}{\sqrt{2}},\frac{1}{\sqrt{2}}\big)$. 
		Also, $W_r(0,0)=1$. Then since $W(a,a)=1$, and the lower Dini derivatives of $W$ are at least $-c$, we see that up to translation of the domain, $W$ is bounded below by $W_r$ in some neighborhood of $(a,a)$. Therefore, by Lemma~\ref{lem:LocalDominance}, we have 
   \begin{equation}\label{eqn:FirstDiniLowerNonzero}
				\omega(\mathbb{G}(n,W)) \geq 	(1-o(1))\omega(\mathbb{G}(n,W_r)) - O(\log n)
	\end{equation}
		a.a.s. Note that we have not assumed that $W$ is bounded away from 1 away from the point $(a,a)$; however, we may still apply Lemma~\ref{lem:LocalDominance}, as we are only looking for a lower bound on $\omega(\mathbb{G}(n,W))$. 
		
		And by Lemma~\ref{lem:SqrtLowerBd}, the clique number of $\mathbb{G}(n,W_r)$ is at least $ \left(\frac{1}{12er}\right)^{1/2}\cdot \sqrt{n}$; thus (\ref{eqn:FirstDiniLowerNonzero}) becomes
		\begin{align*}
		\omega(\mathbb{G}(n,W))
		&\textstyle \geq (1-o(1))\left(\frac{1}{12er}\right)^{1/2}\cdot \sqrt{n} - O(\log n) = \Theta(\sqrt{n}).
		\end{align*}
		This proves part  (\ref{lem:LowerDiniNonzero}).
		
		The proof of (\ref{lem:LowerDiniZero}) is similar; if all directional derivatives of $W$ are equal to zero, then for any constant $r>0$, consider the graphon $W_{r}(x,y) = (1-x)^{r}(1-y)^{r}$. Since the directional derivatives of $W_r$ at $(0,0)$ are at most $-\frac{r}{\sqrt{2}}$, we have $W\geq W_{r}$ in some neighborhood of $(a,a)$, up to translation of the domain. Thus, again by Lemma~\ref{lem:LocalDominance}, 
		\[\omega(\mathbb{G}(n,W))\geq (1-o(1))\omega(\mathbb{G}(n,W_{r})-O(\log n) 
		\]
		And as above, this yields
		\begin{align*}
		\omega(\mathbb{G}(n,W))
		&\geq (1-o(1))\cdot\left(\tfrac{1}{12er}\right)^{1/2}-O(\log n)\\
		& = (1-o(1))\left(\tfrac{1}{12er}\right)^{1/2}\cdot \sqrt{n}.
		\end{align*}
		Then since we can choose $r$ arbitrarily small, we see that
		\[
		\omega(\mathbb{G}(n,W)) = \omega(\sqrt{n}),
		\]
		asymptotically almost surely, completing the proof of part (\ref{lem:LowerDiniZero}).
	\end{proof}
	
	Just as the a bound on the lower Dini derivatives of a graphon $W$ gives us a lower bound on the clique number of a $W$-random graph, a bound on the upper Dini derivatives will give us an upper bound. Since we are proving an upper bound on clique number, we will add the assumption that the graphon under consideration is only equal to 1 at a finite number of points $(a,a)$. Together with Lemma~\ref{lem:Lipschitz}, the following result will prove Theorem~\ref{thm:DirectionalDerivativeCliqueNum}.
	
	\begin{lemma}\label{lem:UpperDini}
		Let $W\colon[0,1]^2\to [0,1]$ be a graphon equal to $1$ at some collection of points $(a_1,a_1),\dots, (a_k,a_k)$, and essentially bounded away from 1 in some neighborhood of each other point $(x,x)$ for $x\in[0,1]$. Then
		\begin{enumerate}[(i)]
			\item if all upper Dini derivatives of $W$ at $(a_1,a_1),\dots, (a_k,a_k)$ are uniformly bounded away from zero then $\omega(\mathbb{G}(n,W)) = O(\sqrt{n})$ a.a.s.,\ and \label{lem:UpperDiniNonzero}
			\item if all upper Dini derivatives of $W$ at $(a_1,a_1),\dots, (a_k,a_k)$ are equal to $-\infty$, then $\omega(\mathbb{G}(n,W)) = o(\sqrt{n})$ a.a.s.\label{lem:UpperDiniInfty}
		\end{enumerate}
	\end{lemma}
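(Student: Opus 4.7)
The overall approach mirrors the lower-bound proof of Lemma~\ref{lem:LowerDini}: locally dominate $W$ from \emph{above} by a member of the family $\{W_r\}_{r>0}$ near each $(a_i,a_i)$, and then transfer the $O(\sqrt n)$ bound from Lemma~\ref{lem:SqrtUpperBd} through Lemma~\ref{lem:LocalDominance}. The two differences from Lemma~\ref{lem:LowerDini} are that the upper Dini derivative bound is now the relevant hypothesis, and that several singular points must be handled simultaneously.

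For part (\ref{lem:UpperDiniNonzero}), write the hypothesis as $W_+'((a_i,a_i),d)\le -c$ for all $i$ and all unit directions $d$. The directional derivative of $W_r$ at $(0,0)$ in a direction $(d_1,d_2)$ with $d_1,d_2\ge 0$ equals $-r(d_1+d_2)\in[-r\sqrt{2},-r]$; choosing any $r$ with $0<r<c/\sqrt{2}$, this derivative is strictly greater (less negative) than $-c$ in every admissible direction. A first-order comparison then yields $W(x,y)\le W_r^{(i)}(x,y):=(1-|x-a_i|)^r(1-|y-a_i|)^r$ on some open neighborhood $I_i$ of $a_i$ (after reflecting $W_r$ appropriately to place its maximum at $a_i$). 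Taking the partition $\{I_1,\ldots,I_k,I_0\}$ of $[0,1]$ with $I_0$ the complement, Lemma~\ref{lem:SubCliqueN}(\ref{lem:SubCliqueUpperN}) bounds $\omega(\mathbb{G}(n,W))$ from above (up to a $(1+o(1))$ factor) by the sum of clique numbers on the parts. The complement contributes $O(\log n)$ via Lemma~\ref{lem:TheImportanceOfBeingDiagonal}, since $W|_{I_0^2}$ is essentially bounded away from $1$ near each diagonal point; each $I_i$ contributes $O(\sqrt{n})$ after applying Lemma~\ref{lem:LocalDominance} in its subgraphon form (noted after that lemma's proof) and running a first-moment calculation as in Lemma~\ref{lem:SqrtUpperBd} on the rescaled interval $I_i$. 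Summing gives $\omega(\mathbb{G}(n,W))=O(\sqrt n)$ a.a.s.

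Part (\ref{lem:UpperDiniInfty}) follows by running the argument of part (\ref{lem:UpperDiniNonzero}) with $r$ arbitrarily large: since $W_+'=-\infty$ in every direction, the hypothesis of (\ref{lem:UpperDiniNonzero}) holds with $c=M$ for every $M>0$, so any $r>0$ is admissible. The explicit constant in Lemma~\ref{lem:SqrtUpperBd}'s bound is $(e/r)^{1/2}$, which tends to $0$ as $r\to\infty$. A standard diagonal selection---taking $r=r(n)\to\infty$ slowly enough that the a.a.s.\ bound of part (\ref{lem:UpperDiniNonzero}) still holds---then gives $\omega(\mathbb{G}(n,W))=o(\sqrt n)$ a.a.s.

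The main technical obstacle is establishing the pointwise inequality $W\le W_r^{(i)}$ on a full open neighborhood of $(a_i,a_i)$ from the Dini-derivative hypothesis. The limsup condition controls $W$ along each ray only for $h$ below some $h_0=h_0(d,\eta)$ that may a priori depend on the direction $d$, so a uniformity step---compactness of the unit circle together with a mild regularity assumption on $W$, implicit in the phrase ``uniformly bounded away from zero''---is needed before a single neighborhood $I_i$ can be produced. Once that uniformity is in hand, the remainder is a routine assembly of the Section~\ref{sec:Prelim} lemmas with the first-moment calculation underlying Lemma~\ref{lem:SqrtUpperBd}.
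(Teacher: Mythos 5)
Your proposal follows essentially the same route as the paper's proof: near each $(a_i,a_i)$ you dominate $W$ from above by a translated/reflected member of the family $\{W_r\}$ using the upper Dini derivative hypothesis, split $[0,1]$ with Lemma~\ref{lem:SubCliqueN}~(\ref{lem:SubCliqueUpperN}), transfer the bound through Lemma~\ref{lem:LocalDominance} together with the first-moment bound of Lemma~\ref{lem:SqrtUpperBd}, and let $r\to\infty$ for part~(\ref{lem:UpperDiniInfty}). The differences are cosmetic---the paper cuts $[0,1]$ into $2k$ subintervals with each $a_i$ at an endpoint so that Lemma~\ref{lem:SqrtUpperBd} can be reused verbatim via Lemma~\ref{lem:SubCliqueN}~(\ref{lem:SubCliqueLowerN}) rather than redoing the first-moment computation for your reflected ``tent'' graphon, and the uniformity-over-directions point you flag (your choice $r<c/\sqrt{2}$ is in fact the more careful accounting of the extremal direction) is passed over silently in the paper's own proof, so it does not constitute a gap relative to it.
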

	Before giving the proof, we briefly show how Theorem~\ref{thm:DirectionalDerivativeCliqueNum} follows as a direct consequence of part (\ref{lem:UpperDiniNonzero}) of this lemma, together with Lemma~\ref{lem:Lipschitz}.
	
	\begin{proof}[Proof of Theorem~\ref{thm:DirectionalDerivativeCliqueNum}]
		If $W$ is equal to $1$ at the points $(a_1,a_1),\dots, (a_k,a_k)$, and its directional derivatives exist and are uniformly bounded away from $-\infty$ at these points, then as argued in the proof of Lemma~\ref{lem:Lipschitz}, $W$ is locally Lipschitz at these points. Therefore, we can apply Lemma~\ref{lem:Lipschitz} and conclude that $\omega(\mathbb{G}(n,W)) = \Omega(\sqrt n)$ asymptotically almost surely.
		
		Similarly, if the directional derivatives of $W$ are uniformly bounded away from $0$ at the points $(a_1,a_1),\dots, (a_k,a_k)$, and if $W$ is essentially bounded away from 1 in some neighborhood of each other point $(x,x)$ for $x\in[0,1]$, then we may apply Lemma~\ref{lem:UpperDini} (\ref{lem:UpperDiniNonzero}) to obtain $\omega(\mathbb{G}(n,W)) = O(\sqrt n)$ a.a.s.\ Therefore $\omega(\mathbb{G}(n,W)) = \Theta(\sqrt{n})$ asymptotically almost surely, as desired.
	\end{proof}
	
	We now prove the lemma above.
	
	\begin{proof}[Proof of Lemma~\ref{lem:UpperDini}]
		First, divide $[0,1]$ into subintervals so that each contains only one point of interest; specifically, divide $[0,1]$ at each point $a_i$ and at an arbitrary point between each pair $a_i$ and $a_{i+1}$. This will produce a partition of $[0,1]$ into a total of $2k$ subintervals $A_1,\dots, A_{2k}$ so that for each $A_i$, either the left or right endpoint is one of the values $a_j$, and $W|_{A_i\times A_i}$ is  essentially bounded away from 1 in some neighborhood of each other $(x,x)\neq (a_j,a_j)$. We will bound the clique number of $\mathbb{G}(n,W)$ in terms of the subgraphons $W|_{A_i\times A_i}$. By Lemma~\ref{lem:SubCliqueN} (\ref{lem:SubCliqueUpperN}), a.a.s.,
		\begin{equation}\label{eqn:DiniUpper}
		\omega(\mathbb{G}(n,W))\leq    (1+o(1))\big[\omega(\mathbb{G}(n_1^+,W|_{A_1\times A_1})) + \cdots + \omega(\mathbb{G}(n_{2k}^+,W|_{A_{2k}\times A_{2k}}))\big]
		\end{equation}
		where each $n_i^+$ is of the form $n_i^+ =(1+o(1))\lambda(A_i)n$, and is a function only of $n$ and $\lambda(A_i)$, and not of $W$ (this fact follows from the proof of Lemma~\ref{lem:SubCliqueN}). 
		
		Now, to prove part (\ref{lem:UpperDiniNonzero}), take any $A_i$, and suppose the upper Dini derivatives of $W|_{A_i\times A_i}$ are at most $-c$ at $(a,a)$, for the endpoint $a$ of $A_i$ at which $W(a,a) = 1$. For any $0<\eps<c$, consider the graphon \[
		W_r(x,y) = W_{{c-\eps}}(x,y) = (1-x)^{c-\eps}(1-y)^{c-\eps}.
		\] 
		We have $W_{r}(0,0) = 1$, and at the point $(0,0)$, all the directional derivatives of $W_{{c+\eps}}$ are at least $-r=-c+\eps$ (achieved in the directions $(0,1)$ and $(1,0)$). Therefore, up to translation and/or reflection, $W|_{A_i\times A_i}$ is bounded above by $ W_{r}$ in some neighborhood, and essentially bounded away from 1 near each $(x,x)$ outside that neighborhood. The same statement also holds if we replace $W_r$ with $W_r|_{[0,\ell]^2}$, for $\ell=\lambda(A_i)$, and in this case, up to translation and/or reflection of the domain, $W|_{A_i\times A_i}$ and $W_r|_{[0,\ell]^2}$ are graphons on the \emph{same} interval. Therefore we may apply Lemma~\ref{lem:LocalDominance} to conclude that
		\begin{equation}\label{eqn:UpperDiniSub1}
			\omega(\mathbb{G}(n_i^+,W|_{A_i\times A_i})) \leq (1+o(1))\cdot \omega(\mathbb{G}(n_1^+,W_r|_{[0,\ell]^2}))+O(\log n_i^+)
		\end{equation}
		a.a.s. And by Lemma~\ref{lem:SubCliqueN} (\ref{lem:SubCliqueLowerN}), 
		\begin{equation*}		 (1+o(1))\cdot\omega(\mathbb{G}(n_i^-,W_r|_{[0,\ell]^2}))\leq \omega(\mathbb{G}(n,W_r)) 
		\end{equation*}		
		a.a.s.\ as well, where $n_i^- = (1-o(1))\lambda(A_i)n=(1-o(1))n_i^+$. Equivalently, rearranging slightly,
		\begin{align}
		\notag\omega(\mathbb{G}(n_1^+,W_r|_{[0,\ell]^2}))&\leq (1-o(1))\cdot \omega(\mathbb{G}(n(1+o(1)),W_r)) \\
		&\leq (1-o(1))\sqrt{\tfrac{e}{r}}\cdot \sqrt{n(1+o(1))} \label{eqn:UpperDiniSubNote}\\
		&= (1-o(1))\sqrt{\tfrac{e}{r}}\cdot \sqrt{n},\label{eqn:UpperDiniSub2}
		\end{align}		
		where (\ref{eqn:UpperDiniSubNote}) is a direct application of Lemma~\ref{lem:SqrtUpperBd}. Together, (\ref{eqn:UpperDiniSub1}) and (\ref{eqn:UpperDiniSub2}) imply that 
		\[
			\omega(\mathbb{G}(n_i^+,W|_{A_i\times A_i})) \leq  (1-o(1))\sqrt{\tfrac{e}{r}}\cdot \sqrt{n} = O(\sqrt{n})
		\]
		a.a.s. Since this is true for each $i$, equation (\ref{eqn:DiniUpper}) becomes.
		\begin{align*}
		\omega(\mathbb{G}(n,W))\leq 2k\cdot O(\sqrt{n}) = O(\sqrt{n}),
		\end{align*}
		proving part (\ref{lem:UpperDiniNonzero}).
		\\
		\\To prove part (\ref{lem:UpperDiniInfty}), recall that the directional derivatives of $W_r$ at $(0,0)$ are at least $-r$. So if the upper Dini derivatives of $W$ are $-\infty$ at each of the points $(a_i,a_i)$, then for each $A_i$ and any $r>0$, we have (up to translation and/or reflection) $W|_{A_i\times A_i}\leq W_r$ locally on some neighborhood, and $W|_{A_i\times A_i}$ is essentially bounded away from 1 near each $(x,x)$ outside that neighborhood. Then for any $r>0$, as argued in the proof of part (\ref{lem:UpperDiniNonzero}), equations (\ref{eqn:UpperDiniSub1}) and (\ref{eqn:UpperDiniSub2}) hold here as well, again implying that
		\begin{align*}
		\omega(\mathbb{G}(n_i^+,W|_{A_i\times A_i})) \leq  (1-o(1))\sqrt{\tfrac{e}{r}}\cdot \sqrt{n}
		\end{align*}
		a.a.s. Then since we can choose $r$ arbitrarily large, we conclude that $\omega(\mathbb{G}(n_i^+,W|_{A_i\times A_i})) = o(\sqrt n)$ a.a.s. Substituting into (\ref{eqn:DiniUpper}), this gives
		\[
		\omega(\mathbb{G}(n,W))\leq 2k\cdot o(\sqrt n) = o(\sqrt n)
		\]
		asymptotically almost surely, completing the proof of (\ref{lem:UpperDiniInfty}).
	\end{proof}
	
	\section{A family of $W$-random graphs with clique number $\Theta(n^\alpha)$}\label{sec:Poly}
	
	As seen in the previous section, graphons $W$ with moderate local growth near points where $W(x,x) = 1$ produce $W$-random graphs with clique numbers $\Theta(\sqrt{n})$. In this section, we prove Theorem~\ref{thm:PolyFamCliqueNum}, which introduces a family of graphons with clique numbers $\Theta(n^\alpha)$ for any $\alpha\in(0,1)$. The members $W$ of this family corresponding to $\alpha\neq \frac{1}{2}$ have directional derivatives either $0$ or $-\infty$ at points where $W(x,x) = 1$, consistent with the results of the previous section. In this section, we also prove Theorem~\ref{thm:AlphaHolder}, which characterizes a larger class of $W$-random graphs with clique numbers $\Omega(n^\alpha)$, and Proposition~\ref{prop:TaylorSeriesAlmostLinear}, which gives an example of a $W$-random graph with clique number $n^{1-o(1)}$. We begin by proving Theorem~\ref{thm:PolyFamCliqueNum}, restated here for the convenience of the reader.
	
	\PolyFamClique*
	
	We will prove Theorem~\ref{thm:PolyFamCliqueNum} in very much in the same way as Theorem~\ref{lem:SqrtFamily}; first, we prove an upper bound on the clique number of $\mathbb{G}(n,U_r)$ by the first moment method.
	
	\begin{lemma}\label{lem:PolyUpperBd}
		For any $r>0$, the clique number of the random graph $\mathbb{G}(n,U_r)$ is at most $(1+o(1))\cdot  \left( \Gamma\left(1+\tfrac{1}{r}\right)e\right)^{\frac{r}{r+1}}\cdot n^{\frac{r}{r+1}} = \Theta(n^{\frac{r}{r+1}})$.
	\end{lemma}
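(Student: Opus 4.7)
The strategy follows exactly the template of Lemma~\ref{lem:SqrtUpperBd}: apply the first moment method. Let $X_k$ denote the number of $k$-cliques in $\mathbb{G}(n,U_r)$; by Markov's inequality, $\omega(\mathbb{G}(n,U_r))$ is a.a.s.\ bounded above by any $k$ for which $\mathbb{E}[X_k]=o(1)$. Because $U_r(x,y) = (1-x^r)(1-y^r)$ factors across the two coordinates, the integrand $\prod_{\ell\neq m}U_r(x_\ell,x_m)$ separates into a product $\prod_\ell (1-x_\ell^r)^{k-1}\cdot \prod_m (1-x_m^r)^{k-1}$ after collecting exponents, so
\[
\mathbb{E}[X_k] \;=\; \binom{n}{k}\left(\int_0^1 (1-x^r)^{k-1}\,dx\right)^{k}.
\]

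The central calculation is the asymptotic behavior of the one-dimensional integral as $k\to\infty$. The substitution $u=x^r$ turns it into a Beta integral:
\[
\int_0^1 (1-x^r)^{k-1}\,dx \;=\; \tfrac{1}{r}\int_0^1 (1-u)^{k-1} u^{1/r-1}\,du \;=\; \tfrac{1}{r}B\!\left(\tfrac{1}{r},k\right) \;=\; \Gamma\!\left(1+\tfrac{1}{r}\right)\cdot\frac{\Gamma(k)}{\Gamma(k+\tfrac{1}{r})}.
\]
Stirling's formula (in the form $\Gamma(k+a)/\Gamma(k) = k^{a}(1+o(1))$ as $k\to\infty$ for fixed $a$) then gives $\int_0^1 (1-x^r)^{k-1}dx = (1+o(1))\Gamma(1+1/r)\,k^{-1/r}$ for any $k$ that is $\omega(1)$. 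Combined with $\binom{n}{k} = \left(\tfrac{en}{k}(1+o(1))\right)^k$ for $k$ that is $\omega(1)$ and sublinear, this yields
\[
\mathbb{E}[X_k] \;=\; \left(\frac{e\,\Gamma(1+1/r)\,n}{k^{1+1/r}}(1+o(1))\right)^{k}.
\]

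Setting the bracketed base equal to $1$ identifies the cutoff at which $\mathbb{E}[X_k]$ transitions from $\omega(1)$ to $o(1)$: solving $k^{1+1/r} = e\,\Gamma(1+1/r)\,n$ yields $k = \bigl(e\,\Gamma(1+1/r)\bigr)^{r/(r+1)}\cdot n^{r/(r+1)}$. Any $k$ exceeding this by a factor of $1+\Omega(1)$ makes $\mathbb{E}[X_k]=o(1)$, which via Markov gives the claimed upper bound a.a.s. The main technical point, and the only real departure from the proof of Lemma~\ref{lem:SqrtUpperBd}, is the Beta-function asymptotic above; everything else is a routine repetition of the earlier Stirling-plus-Markov argument. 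No variance estimate or coupling is needed for this direction, so no obstacle comparable to the ones flagged for the lower bounds arises here.
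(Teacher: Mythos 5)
Your proposal is correct and follows essentially the same route as the paper's proof: the first moment method with the rank-1 factorization of $U_r$, the substitution $u=x^r$ yielding the Beta/Gamma identity $\int_0^1(1-x^r)^{k-1}dx=\Gamma(1+\tfrac{1}{r})\Gamma(k)/\Gamma(k+\tfrac{1}{r})$, Stirling's asymptotic $\Gamma(k)/\Gamma(k+\tfrac{1}{r})=(1+o(1))k^{-1/r}$, and the cutoff $k\sim\bigl(e\,\Gamma(1+\tfrac{1}{r})\bigr)^{\frac{r}{r+1}}n^{\frac{r}{r+1}}$. No gaps.
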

	
	\begin{proof}
		For any $r>0$, write $X_k$ for the number of cliques of size $k$ in $\mathbb{G}(n,U_r)$. By Markov's inequality, the expected clique number of any random graph is a.a.s.\ bounded above by any value of $k$ for which $\mathbb{E}[X_k]=o(1)$. And for any $k$,
		\begin{align*}
		\mathbb{E}[X_k] & = \binom{n}{k}\int_{[0,1]^{k}}\prod_{\ell\neq m\in[k]}(1-x_\ell^r)\cdot (1-x_m^r)\ d\vec{x}\\
		& = \binom{n}{k}\left(\int_0^1 (1-x^r)^{k-1}\,dx\right)^k.
		\end{align*}
		Using the change of variables $u = x^r$, this expression becomes
		\begin{align*}
		& \binom{n}{k}\left(\frac{1}{r}\cdot \int_0^1 u^{\frac{1}{r}-1}(1-u)^{k-1}\,du\right)^k
		=\binom{n}{k}\left(\frac{1}{r}\cdot \frac{\Gamma(k)\Gamma(\frac{1}{r})}{\Gamma(k+\frac{1}{r})}\right)^k,
		\end{align*}
		where the last equality follows from the definition of the beta function, and its relationship to the gamma function. Simplifying slightly, we obtain
		\begin{align}\label{eqn:ExpPolyCliques}
		\mathbb{E}[X_k] & =\binom{n}{k} \left(\frac{\Gamma(k)\Gamma(1+\frac{1}{r})}{\Gamma(k+\frac{1}{r})}\right)^k.
		\end{align}
		And for any $k$ that is $\omega(1)$ but sublinear,  we have $\binom{n}{k} = \left(\frac{en}{k}(1-o(1))\right)^k$; thus (\ref{eqn:ExpPolyCliques}) becomes
		\begin{align*}
		\mathbb{E}[X_k] & =\left(\frac{en}{k}(1-o(1))\right)^k\left(\frac{\Gamma(k)\Gamma(1+\frac{1}{r})}{\Gamma(k+\frac{1}{r})}\right)^k.
		\end{align*}
		To obtain explicit asymptotics for this expression, we use Stirling's formula for the Gamma function, which states that for $x\to\infty$, 
		\[
		\Gamma(x)= (1+o(1)) \sqrt{(2\pi(x-1))}\left(\frac{x-1}{e}\right)^{x-1}.\]
		From this, for any fixed $r>0$ and $k\to\infty$, it follows that
		\begin{align}
		\frac{\Gamma(k)}{\Gamma(k+\frac{1}{r})} 
		&=(1+o(1))k^{-\frac{1}{r}}.\notag
		\end{align}
		Substituting this into (\ref{eqn:ExpPolyCliques}), we see that
		\begin{align*}
		\mathbb{E}[X_k]
		& =\left(\frac{en}{k}(1-o(1))\right)^k\left((1+o(1))k^{-\frac{1}{r}}\cdot \Gamma(1+\tfrac{1}{r})\right)^k\\
		& =\left(\frac{en\cdot \Gamma(1+\tfrac{1}{r})}{k^{1+\frac{1}{r}}}(1+o(1))\right)^k.
		\end{align*}
		Therefore the cutoff at which $\mathbb{E}[X_k]$ goes from asymptotically 0 to asymptotically infinity is when $k^{1+\frac{1}{r}}\sim en\cdot \Gamma(1+\tfrac{1}{r})$, or equivalently, $k\sim \left(\Gamma\left(1+\tfrac{1}{r}\right)e\right)^{\frac{r}{r+1}}\cdot n^{\frac{r}{r+1}}$. Hence, with probability $1-o(1)$, the clique number of $\mathbb{G}(n,U_r)$ is at most
		\begin{align*}
		k &=(1+o(1))\cdot (\Gamma\left(1+\tfrac{1}{r}\right)e)^{\frac{r}{r+1}}\cdot n^{\frac{r}{r+1}} = \Theta(n^{\frac{r}{r+1}}).\qedhere
		\end{align*}
	\end{proof}
	Now we will prove a lower bound on the clique number of $\mathbb{G}(n,U_r)$ - as in the previous section, it will match the upper bound up to a constant.    
	\begin{lemma}\label{lem:PolyLowerBd}
		The clique number of $\mathbb{G}(n,U_r)$ is a.a.s.\ at least $\frac{1}{2} \cdot e^{-\frac{2}{1+r}}\cdot n^{\frac{r}{r+1}}$.
	\end{lemma}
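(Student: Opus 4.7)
The plan is to follow the same strategy used for Lemma~\ref{lem:SqrtLowerBd}: we cannot use the second moment method (as noted earlier, the variance is too large), so instead we will directly guess which vertices are most likely to form a large clique, show that most of the possible edges among them are present, and then delete one endpoint from each missing edge to obtain a clique of the desired size. For the graphon $U_r(x,y)=(1-x^r)(1-y^r)$ the natural candidate vertices are those lying in a small initial interval $[0,tn^{-1/(r+1)}]$ for a constant $t$, since the scaling $n^{-1/(r+1)}$ is exactly the threshold that makes the number of such vertices grow like $n^{r/(r+1)}$ while keeping the per-edge failure probability small.

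Concretely, I would proceed as follows. First, fix a constant $s>0$ (to be optimized later). By Lemma~\ref{lem:IntervalPointConcent}, asymptotically almost surely there is some $t=(1+o(1))s$ such that exactly $sn^{r/(r+1)}$ vertices of $\mathbb{G}(n,U_r)$ lie in $[0,tn^{-1/(r+1)}]$. Conditioning on this event, the probability that any \emph{specific} potential edge between two such vertices is missing is at most
\[
1-(1-(tn^{-1/(r+1)})^r)^2 \;=\; 2t^r n^{-r/(r+1)}\cdot(1+o(1)),
\]
by a binomial expansion. Then I would bound the probability that a given set of $k$ edges is missing by the product $(2t^r n^{-r/(r+1)}(1+o(1)))^k$, and take a union bound over all sets of $k$ edges among the $sn^{r/(r+1)}$ distinguished vertices, which contributes a factor at most $\binom{\binom{sn^{r/(r+1)}}{2}}{k}\le \bigl(e s^2 n^{2r/(r+1)}/(2k)\bigr)^k$. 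Combining, the probability that at least $k$ edges are missing is at most
\[
\left(\frac{e\, s^2 t^r\, n^{r/(r+1)}}{k}\cdot(1+o(1))\right)^{k}.
\]

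Next, I would set $k=\tfrac{1}{2}sn^{r/(r+1)}$; the bracketed quantity then simplifies to $2es t^r(1+o(1))$, which (since $t=(1+o(1))s$) is $2es^{r+1}(1+o(1))$. Choosing any constant $s$ with $s^{r+1}<1/(2e)$ — for instance $s$ slightly smaller than $(2e)^{-1/(r+1)}$, or, to match the stated constant, $s=e^{-2/(r+1)}$, for which $2es^{r+1}=2/e<1$ — makes this probability $o(1)$. We therefore conclude that a.a.s.\ the induced subgraph on the $sn^{r/(r+1)}$ vertices is missing fewer than $k=\tfrac12 sn^{r/(r+1)}$ edges, and greedily deleting one endpoint of each missing edge leaves a clique of size at least
\[
sn^{r/(r+1)}-k \;=\; \tfrac{1}{2}s\, n^{r/(r+1)} \;\ge\; \tfrac{1}{2}\,e^{-2/(r+1)}\, n^{r/(r+1)}.
\]

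The calculation itself is routine; the main conceptual obstacle is just verifying that the scalings line up so that the union bound closes, i.e.\ that the per-edge cost $n^{-r/(r+1)}$ exactly cancels the $\binom{\#\text{vertices}}{2}\sim n^{2r/(r+1)}$ factor up to the final factor of $n^{r/(r+1)}$ in the numerator, allowing the choice $k=\Theta(n^{r/(r+1)})$. A secondary subtlety is that we must be careful that the concentration result of Lemma~\ref{lem:IntervalPointConcent} applies to the random number of vertices in the interval $[0,tn^{-1/(r+1)}]$ (since $tn^{-1/(r+1)}=\omega(1/n)$, this is indeed in its range of applicability), so that $t=(1+o(1))s$ can be chosen with probability $1-o(1)$.
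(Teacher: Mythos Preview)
Your proposal is correct and follows essentially the same approach as the paper: guess the candidate vertices in $[0,tn^{-1/(r+1)}]$, bound the probability that any fixed $k$-set of edges is missing, union bound over all such $k$-sets, set $k=\tfrac12 s n^{r/(r+1)}$, and choose $s=e^{-2/(1+r)}$ so that $2es^{r+1}<1$. The computations and parameter choices match the paper's proof almost line for line.
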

	
	\begin{proof}
		
		As in the proof of Lemma~\ref{lem:SqrtLowerBd}, we will directly compute a lower bound on the expected clique number for $\mathbb{G}(n,U_r)$ by guessing which vertices are most likely to form a large clique and showing that this does in fact happen with high probability. Suppose that there are $sn^{\frac{r}{r+1}}$ vertices less than $tn^{-\frac{1}{r+1}}$. (Note that the expected number of such vertices is $n\cdot tn^{-\frac{1}{r+1}} = tn^{\frac{r}{r+1}}$.) By Lemma~\ref{lem:IntervalPointConcent}, for any constant $s$, there is some $t = (1+o(1))s$, such that this will occur with probability $1-o(1)$. Then, given a set of $sn^{\frac{r}{r+1}}$ such vertices, what is the probability that the subgraph they induce is missing at most $k$ edges? The probability that any fixed set of $k$ potential edges is missing is at most
		\begin{align*}
		\prod_{k \text{ edges}}&\left[1-(1-(tn^{-\frac{1}{r+1}})^r)(1-(tn^{-\frac{1}{r+1}})^r)\right]\\
		&= \left[1-(1-(tn^{-\frac{1}{r+1}})^r)^2\right]^k\\
		&= \left[t^rn^{-\frac{r}{r+1}}\left(2-t^rn^{-\frac{r}{r+1}}\right)\right]^k\\
		&\leq \left(t^rn^{-\frac{r}{r+1}}\cdot 2\right)^k.
		\end{align*}
		Then, by a union bound, the probability that there exists any set of $k$ edges missing from the induced subgraph on these $sn^{\frac{r}{r+1}}$ vertices is
		\begin{align*}
		\binom{\binom{sn^{\frac{r}{r+1}}}{2}}{k} \cdot \left(t^rn^{-\frac{r}{r+1}}\cdot 2\right)^k
		&\leq \left(\frac{e(s^2n^{\frac{2r}{r+1}}/2)}{k}\right)^k\cdot \left(t^rn^{-\frac{r}{r+1}}\cdot 2\right)^k \\
		& =  \left(\frac{et^rs^2n^{\frac{r}{r+1}}}{k}\right)^k .
		\end{align*}
		If we choose $k$ to be, for example $\frac{1}{2}sn^{\frac{r}{r+1}}$, then this is equal to $\left({2et^rs}\right)^{\frac{1}{2}sn^{\frac{r}{r+1}}}$. As long as ${2et^rs} = 1-\Omega(1)$, or equivalently, $s^{1+r}= \frac{1-\Omega(1)}{2e}$, we will have $\left({2et^rs}\right)^{\frac{1}{2}sn^{\frac{r}{r+1}}} = o(1)$. Taking any constant $s<(2e)^{-\frac{1}{1+r}}$ suffices, for example $s = e^{-\frac{2}{1+r}}$. Therefore, for such a constant $s$, the induced subgraph on the $sn^{\frac{r}{r+1}}$ vertices under consideration is missing at most $k=\frac{1}{2}sn^{\frac{r}{r+1}}$ edges with probability $1-o(1)$. Deleting one vertex from each of these non-edges, we obtain a clique of size at least $sn^{\frac{r}{r+1}}-k = \frac{1}{2}sn^{\frac{r}{r+1}} =
		\frac{1}{2} \cdot e^{-\frac{2}{1+r}}\cdot n^{\frac{r}{r+1}}$ asymptotically almost surely.
	\end{proof}

	Notice that, as in Lemma~\ref{lem:SqrtLowerBd}, this does not quite match the upper bound of $ (\Gamma\left(1+\tfrac{1}{r}\right)e)^{\frac{r}{r+1}}\cdot n^{\frac{r}{r+1}}$; we could narrow the gap somewhat by optimizing parameters in the proof just given, but not to the point of closing it entirely. And as in Section \ref{sec:Sqrt}, the number of cliques of any size of the order $\Theta\big(n^{\frac{r}{r+1}}\big)$ in $\mathbb{G}(n,U_r)$ has quite high variance, which tells us that we cannot directly apply the second moment method to show that the lower bound we have given is tight (as indeed, it may not be). This argument is fleshed out more fully in the appendix, with a variance bound given by Lemma~\ref{cor:BigVarSqrtAndPoly} (\ref{cor:BigVarPoly}).
	
	We now use Theorem~\ref{thm:PolyFamCliqueNum} to prove Theorem~\ref{thm:AlphaHolder}, restated below. But first, we briefly discuss the continuity hypothesis in Theorem~\ref{thm:AlphaHolder}.
	
	\begin{definition}
		A graphon $W$ is \emph{locally $\alpha$-H\"older continuous} at $(a,a)$ if there exists some neighborhood $U$ of $(a,a)$ and some constant $C>0$ such that for all points \mbox{$(a,a)+(x,y)\in U$},
		\begin{equation}\label{eqn:AlphaHolderDef}
		\big|W\big((a,a)+(x,y)\big) - W(a,a)\big|<C\|(x,y)\|^\alpha,  
		\end{equation}
		where $\|\cdot\|$ may be taken to represent any fixed norm on $\mathbb{R}^2$.
	\end{definition}
	Typically, local $\alpha$-H\"older continuity is defined only for $\alpha\in[0,1]$; however, everything we do here will in fact hold and have meaning for larger $\alpha$ as well. On an interval, $\alpha$-H\"older continuity with $\alpha>1$ holds only for a constant function, but this is not the case for local $\alpha$-H\"older continuity at a single point, which may be achieved by a non-constant function whose derivatives are equal to zero at the point in question. 
	\AlphaHolder*
	
	\begin{proof}[Proof of Theorem~\ref{thm:AlphaHolder}]
		If $W$ is $\alpha$-H\"older continuous at $(a,a)$, then there exist $C>0$ and a neighborhood $U$ of $(a,a)$ such that (\ref{eqn:AlphaHolderDef}) is satisfied. For convenience, we will use the infinity norm. Then, since $W(a,a) = 1$, (\ref{eqn:AlphaHolderDef}) becomes
		\begin{equation}\label{eqn:AlphaHolder2}
		1-W\big((a,a)+(x,y)\big)<C\cdot \max(x,y)^\alpha.  
		\end{equation}
		With this in hand, we will prove a lower bound on the clique number of $W$ by bounding $W$ from below locally by a slightly modified member of the family $\{U_r\}$. Assume without loss of generality that the constant $C$ is at least 1. Then we define
		\[
		U_{\alpha,C}(x,y) =
		\begin{cases}
		(1-Cx^\alpha)(1-Cy^\alpha) &\text{ for $x,y\in[0,\frac{1}{C^{1/\alpha}}]$, and}\\
		0&\text{ otherwise.}
		\end{cases}\]
		Notice that for $x,y\in[0,\frac{1}{C^{1/\alpha}}]$, 
		\begin{align*}
		1-U_{\alpha,C}(x,y) &= 1-(1-Cx^\alpha)(1-Cy^\alpha)\\
		&=Cx^\alpha + Cy^\alpha(1-Cx^\alpha)\\
		&\geq Cx^\alpha
		\end{align*}
		Similarly, we have $1-U_{\alpha,C}(x,y) \geq Cy^\alpha$; thus $1-U_{\alpha,C}(x,y) \geq C\cdot \max{(x,y)}^\alpha$. 
		Therefore, by (\ref{eqn:AlphaHolder2}), we can write
		\begin{align*}
		1-W\big((a,a)+(x,y)\big)<C\cdot \max(x,y)^\alpha\leq 1-U_{\alpha,C}(x,y).
		\end{align*}
		for $(x,y)$ in some neighborhood of $(0,0)$. So up to translation, $W$ is bounded below by $U_{\alpha,C}$ in some neighborhood of $(a,a)$. (To be precise, we have only shown this in one quadrant, but this is sufficient for our purposes here.) And indeed, the clique number of $\mathbb{G}(n,U_{\alpha,C})$ is $\Theta(n^{\frac{\alpha}{\alpha+1}})$, as with $U_\alpha$.
		
		To see this, notice that in $\mathbb{G}(n,U_{\alpha,C})$, by Lemma~\ref{lem:IntervalPointConcent}, there will be $\sim \frac{1}{C^{1/\alpha}}n$ vertices selected from $[0,\frac{1}{C^{1/\alpha}}]$, and they will be uniform on this interval. If  $x$ and $y$ are uniform on $[0,\frac{1}{C^{1/\alpha}}]$, then $C^{1/\alpha}x, C^{1/\alpha}y$ are uniform on $[0,1]$. And for $x,y\in [0,\frac{1}{C^{1/\alpha}}]$, we have $U_{\alpha,C}(x,y) =    U_\alpha(C^{1/\alpha}x,C^{1/\alpha}y)$. So $\mathbb{G}(n,U_{\alpha,C})$ has the same distribution as a $U_\alpha$-random graph with approximately $\frac{1}{C^{1/\alpha}}n$ vertices, which has clique number $\Theta\big(( \frac{1}{C^{1/\alpha}}n)^{\frac{\alpha}{\alpha+1}}\big) = \Theta(n^{\frac{\alpha}{\alpha+1}})$ by Theorem~\ref{thm:PolyFamCliqueNum}.
		
		Now, given that $W$ is locally bounded below by $U_{\alpha,C}$ at $(a,a)$, and that $\mathbb{G}(n,U_{\alpha,C})$ has clique number $\Theta(n^{\frac{\alpha}{\alpha+1}})$, we may use the same argument as in Lemma~\ref{lem:LowerDini}; namely, we apply  Lemma~\ref{lem:LocalDominance}, which gives
		\begin{equation*}
		\omega(\mathbb{G}(n,W)) \geq 	(1-o(1))\cdot \omega(\mathbb{G}(n,U_{\alpha,C})) - O(\log n) = \Theta(n^{\frac{\alpha}{\alpha+1}})
		\end{equation*}
		a.a.s. Therefore $\omega(\mathbb{G}(n,W)=\Omega(n^{\frac{\alpha}{\alpha+1}})$ asymptotically almost surely.
	\end{proof}
	
	We end this section with a proof of Proposition~\ref{prop:TaylorSeriesAlmostLinear}, restated here.
	
	\TaylorSeriesAlmostLinear*

	\begin{proof}
		Our proof consists of two parts: first, for each $r\in\mathbb{N}$, we will show that $W$ is bounded below by $U_r$ locally in some neighborhood of $(0,0)$. We will then use Lemma~\ref{lem:LocalDominance} and the bound on $\omega(\mathbb{G}(n,U_r))$ given by Lemma~\ref{lem:PolyLowerBd} to give a lower bound on $\omega(\mathbb{G}(n,W))$. 
		
		We begin by looking at the (two-variable) Taylor polynomial of $W(x,y)$ about (0,0) of order $r$, for $r\in\mathbb{N}$. It is well known that $f(x)$, as defined above, is smooth on $\mathbb{R}$; this implies that $W$ is smooth on $\mathbb{R}^2$ as well. Thus Taylor's theorem tells us that
\begin{equation}\label{eqn:TaylorThm}
			W(x,y) = \sum_{0\leq i+j\leq r} \left(\frac{\partial^{i+j} W}{\partial x^i\partial y^{j}}(0,0) \cdot\frac{ x^iy^{j}}{i!j!} \right)+ R_r(x,y), 
\end{equation}
		where the remainder term $R_r(x,y)$ is bounded in absolute value by
\begin{equation}\label{eqn:TaylorRemainder}
		|R_r(x,y)| \leq  C \cdot \max(x,y)^{r+1}
\end{equation}
		for some constant $C = C(W,r)$. (Note: we may obtain a more precise bound on the remainder as a function of $(x,y)$, but the bound above will be sufficient here.) It is also well known that $f^{(n)}(0) = 0$ for all $n\in\mathbb{N}$; thus for all $i,j\geq 1$, 
		\[
			\frac{\partial^{i+j} W}{\partial x^i\partial y^{j}}(0,0) = \left(-f^{(i)}(0)\right)\cdot\left(-f^{(j)}(0)\right) = 0.
		\]
		In fact, if either $i\geq 1$ or $j\geq 1$, this will hold. So the only nonzero term of the sum in (\ref{eqn:TaylorThm}) is \[\frac{\partial^{0} W}{\partial x^0\partial y^0}(0,0) = W(0,0) = 1.\]
		Therefore, (\ref{eqn:TaylorThm}) becomes 
		\begin{equation*}
		W(x,y) = 1+ R_r(x,y).
		\end{equation*}
		Now recall that \[U_r(x,y) = (1-x^r)(1-y^r) = 1-(x^r+y^r - x^ry^r).\]
		For any $(x,y)\in [0,1]^2$, we have $x^r+y^r - x^ry^r\geq 0$. So in order to show that $W$ is bounded below by $U_r$ in some neighborhood of $(0,0)$, it will be sufficient to show that $|R_r(x,y)|\leq x^r+y^r - x^ry^r$ for $(x,y)$ in the same neighborhood. And observe that
		\begin{align*}
		x^r+y^r-x^ry^r = x^r+y^r(1-x^r) \geq x^r.
		\end{align*}
		Similarly, $x^r+y^r-x^ry^r \geq y^r$; thus 
		\begin{align}\label{eqn:UBound}
		x^r+y^r-x^ry^r  \geq \max(x,y)^r.
		\end{align}
		We may combine this with the bound on $|R_r(x,y)|$ given by \ref{eqn:TaylorRemainder} after making one last observation: for any constant $C=C(r,W)$, if $(x,y)$ is sufficiently close to $(0,0)$, then $C\cdot \max(x,y) \leq 1$. Therefore, for $(x,y)$ sufficiently close to $(0,0)$, combining \ref{eqn:TaylorRemainder} and \ref{eqn:UBound}, we obtain
		\begin{align*}
			x^r+y^r-x^ry^r  &\geq \max(x,y)^r \\
			&\geq C\max(x,y)\cdot \max(x,y)^r\\
			&\geq |R_r(x,y)|.
		\end{align*}
		Thus, as argued above, 
		\begin{align*}
		W(x,y)\geq U_r(x,y)
		\end{align*}		
	for $(x,y)$ in some neighborhood of $(0,0)$. Therefore, we may apply Lemma~\ref{lem:LocalDominance}, and conclude that 
\begin{align*}
				\omega(\mathbb{G}(n,W))&\geq (1-o(1))\cdot \omega(\mathbb{G}(n,U_r)) - O(\log n)\\
				& \geq (1-o(1))\cdot \tfrac{1}{2} \cdot e^{-\frac{2}{1+r}}\cdot n^{\frac{r}{r+1}},
\end{align*}
a.a.s., where the last line is the lower bound on $\omega(\mathbb{G}(n,U_r))$ from Lemma~\ref{lem:PolyLowerBd}. Then, since $r$ can be chosen to be arbitrarily large, we obtain
\[
	\omega(\mathbb{G}(n,W)) = n^{1-o(1)}
\]
a.a.s., as desired. 
	\end{proof}
	
	\section{Graphons equal to 1 at infinitely many points}\label{sec:Extensions}
	
	In this section, we prove Proposition~\ref{prop:LineExample}, and discuss other directions in which this work could be extended. We have described the clique number of a wide variety of $W$-random graphs where $W(a,a)=1$ for a finite number of $a\in[0,1]$. We could also ask for some characterization of clique numbers of $W$-random graphs when $W(a,a) = 1$ at an infinite number of points, either countable or uncountable. For example, what is the clique number of $\mathbb{G}(n,W)$ for the following graphon $W$?
	
	\begin{example}\label{ex:Countable}
		Let $W(x,y) =\big(1-x\sin^2{\big(\frac{1}{x}\big)}\big)\cdot \big(1-y\sin^2{\big(\frac{1}{y}\big)}\big)$.
	\end{example}
	
	In this case, we have $W(a,a)=1$ at a countably infinite number of points, namely for all $a$ with $\frac{1}{a} = k\cdot \pi$ for $k\in \mathbb{N}$. If we define $W(0,0) = 1$, we may also show that $W$ is locally Lipschitz at $(0,0)$, giving $\omega(\mathbb{G}(n,W)) = \Omega(\sqrt{n})$. The upper Dini derivatives of $W$ at $(0,0)$ are $0$, however, so we cannot use Lemma~\ref{lem:UpperDini} to give an upper bound. It could be interesting to find the correct order of growth of the clique number for this and other examples with a countably infinite number of points with $W(a,a) = 1$.
	
	Proposition~\ref{prop:LineExample} (restated here) gives a rough estimate of the order of growth of $\omega(\mathbb{G}(n,W))$ for a graphon $W$ with $W(a,a) = 1$ on an interval; the following graphon is  equal to 1 along the line $x=y$ and drops off away from that line.
	
	\LineExample*
	
	Before proving this proposition, let us note one difficulty in analyzing this and other graphons that are equal to 1 on a positive-measure portion of the line $x=y$. Namely, to obtain an upper bound on the clique number of such a graphon $W$, we will not easily be able to use the first moment method as with $W_r$ and $U_r$ in Sections \ref{sec:Sqrt} and \ref{sec:Poly}. In order to do so, we would need to compute
	\[
	\mathbb{E}[X_k] = \binom{n}{k}\int_{[0,1]^{k}}\prod_{\ell\neq m\in[k]}W(x_\ell,x_m)\ d\vec{x},
	\]
	where $X_k$ is the number of cliques in $\mathbb{G}(n,W)$ of size $k$. For $W_r$ and $U_r$, we were able to simplify this integral by using the fact that $W_r(x,y)$ and $U_r(x,y)$ are of the form $f(x)f(y)$ for some function $f$. Graphons of this form are called ``rank-1", and we can think of the $W$-random graphs that they produce as a more limited generalization of Erd\H{o}s-R\'enyi random graphs than those produced by graphons generally; in a rank-1 graphon, edge probabilities are not fixed as in the Erd\H{o}s-R\'enyi model, but the likelihood of each pair of vertices to be connected by an edge is determined only by how well-connected these vertices are overall, and not on any more complicated relationship between vertex weights.
	
	The graphon $W$ in the proposition above is not rank-1, so we cannot simplify the first moment calculation above by the same method we used for $W_r$ and $U_r$. More generally, any rank-1 graphon that is equal to 1 on some positive-measure portion of the line $x=y$ is in some sense trivial; if we have a graphon $W$ with $W(x_\ell,x_m) = f(x_\ell)f(x_m)$ and $W(a,a) = 1$ for all $a$ in some positive-measure $A\subseteq[0,1]$, then $f(a) = 1$ for $a\in A$. This would imply that $W$ evaluates to 1 on the positive-measure set $A\times A$, and thus $\mathbb{G}(n,W)$ has a linear-size clique number. So among graphons that are equal to 1 on some positive-measure portion of the line $x=y$, we are primarily interested in those that are not rank-1, and are therefore not susceptible to the simplified first moment calculation technique that we used for $W_r$ and $U_r$.
	
	Instead, to obtain the rough order of growth of $\mathbb{G}(n,W)$ for $W$ in Proposition \ref{prop:LineExample}, we will use a more direct approach; we expect that any set of vertices forming a large clique in $\mathbb{G}(n,W)$ would be sampled from a relatively small interval, as two vertices $x_i$ and $x_j$ are only likely to be connected in $\mathbb{G}({n,W})$ if $|x_i-x_j|$ is small. However, Lemma \ref{lem:IntervalOccupied} tells us that a.a.s.\ there will be no very large set of vertices sampled from a very small interval. We then take a union bound over all sufficiently large sets of vertices (which must each be spread over a not-too-small interval) to show that a.a.s.\ we will not obtain a ``large" clique. Following are the details of that argument.
	
	\begin{proof}[Proof of Proposition~\ref{prop:LineExample}]
		First, observe that $W$ is locally Lipschitz at, for example, the point $(0,0)$; all directional derivatives exist there and are bounded between $-1$ and $0$. So by Lemma~\ref{lem:Lipschitz}, $\mathbb{G}({n,W}) = \Omega(\sqrt{n})$ a.a.s. Now we compute an upper bound on the clique number, using the method outlined in the previous paragraph.
		
		Consider any set $S$ of $k=3\delta  n$ vertices in $\mathbb{G}(n,W)$, with $\delta= \omega\big(\frac{1}{\sqrt{n}}\big)$ to be chosen later; we wish to show that no such set will form a clique. Partition $S$ into $S_1,S_2$, and $S_3$, namely the first $\delta n$ vertices, the middle, and the last, respectively, as they are ordered on the unit interval. By Lemma \ref{lem:IntervalOccupied}, with probability $1-o(1)$, the vertices in each set, and in particular in $S_2$, occupy an interval of length at least $\frac{\delta}{2}(1-o(1))$. Therefore each vertex in $S_1$ is at distance at least $\frac{\delta}{2}(1-o(1))$ from each vertex in $S_3$, and hence by the definition of W, the probability that every such pair of vertices is connected is at most
		\[
		\left(1-\frac{\delta}{2}(1-o(1))\right)^{(\delta n)^2},
		\]
		which gives an upper bound on the probability that $S$ is a clique. Taking a union bound over all sets of $3\delta n$ vertices in $\mathbb{G}(n,W)$, the probability that there exists a clique of size $k=3\delta n$ in $\mathbb{G}(n,W)$ is at most
		\begin{align*}
		\binom{n}{3\delta n}\left(1- \frac{\delta}{2}(1-o(1))\right)^{(\delta n)^2}
		&\leq \left(\frac{en}{3\delta n}\right)^{3\delta n}\left(1-\frac{\delta}{2}(1-o(1))\right)^{(\delta n)^2}\\
		&\leq e^{3\delta n\cdot\log\frac{e}{3\delta}}\ \cdot\ e^{-(\delta n)^2\frac{\delta}{2}(1-o(1))}\\
		&= e^{\delta n\left(3\log\frac{e}{3\delta}-\frac{\delta^2n}{2}(1-o(1)) \right)}.
		\end{align*}
		This will be $o(1)$ if $3\log\frac{e}{3\delta}\leq\frac{\delta^2n}{2}(1-\Omega(1))$, which is satisfied, for example, for $\delta = \frac{1}{\sqrt{n}}\log n = n^{-1/2+o(1)}$ (but not, say, for $\delta = \frac{1}{\sqrt{n}}(\log n)^{1/4}$). So with probability $1-o(1)$, the clique number of  $\mathbb{G}(n,W)$ is at most $3\delta n = n^{-1/2+o(1)}\cdot n = n^{1/2+o(1)}$.
	\end{proof}
	
	\appendix\section{Variance in number of cliques}
	
	In this section, we show that the numbers of $k$-cliques in $\mathbb{G}(n,W_r)$ and $\mathbb{G}(n,U_r)$ have high variance for $k$ within a reasonable range (Lemma~\ref{cor:BigVarSqrtAndPoly}). This makes it impossible to directly use the second moment method to find a useful lower bound on the clique number of these graphs.
	
	In more detail, our setting is as follows: given any graphon $W$, we will write $X_k$ for the number of $k$-cliques in $\mathbb{G}(n,W)$. Suppose that, for a given graphon $W$, we have found a cutoff value $k=k(n)$ at which $\mathbb{E}[X_k]$ goes from asymptotically infinite to asymptotically zero, giving an upper bound of $\omega(\mathbb{G}(n,W)) \leq (1+o(1))k$ with probability $1-o(1)$ by Markov's inequality. In order to prove a matching lower bound, we would like to show that the number of cliques of size $(1-o(1))k$ in $\mathbb{G}(n,W)$ is a.a.s.\ nonzero. Perhaps the simplest way to do this, and the technique used for Erd\H{o}s-R\'enyi random graphs in \cite{GrMc75} and \cite{Ma76},  is the second moment method; namely, Chebyshev's inequality gives the bound
	\begin{equation}\label{eqn:Chebyshev}
	\Pr[X_k = 0] \leq \frac{\Var(X_k)}{\Ex[X_k]^2} = \frac{\Ex[X_k^2]}{\Ex[X_k]^2}-1
	\end{equation}
	for any $k$. If $\Var(X_k) = o(\Ex[X_k]^2)$, or equivalently $\Ex[X_k^2]=(1+o(1))\Ex[X_k]^2$, then this shows that $X_k\geq 1$ with probability $1-o(1)$, and thus $\omega(\mathbb{G}(n,W)) \geq k$ a.a.s.
	
	The entire challenge of applying the second moment method lies in obtaining a good bound on the ratio ${\Ex[X_k^2]}/{\Ex[X_k]^2}$. The following lemma gives a slightly more explicit expression for this quantity; it is a standard result adapted slightly for this application (see Sections 4.3 and 4.5 of \cite{AlSp04}).
	
	\begin{lemma}\label{lem:VarBound}
		Let $W$ be a graphon, and for $S\subseteq [n]$, let $A_S$ be the event that the elements of $S$ form a clique in $\mathbb{G}(n,W)$. Then
		\[
		\frac{\Ex[X_k^2]}{\Ex[X_k]^2} = \sum_{i=0}^{k} \frac{\binom{k}{i} \binom{n-k}{k-i}}{\binom{n}{k}}\cdot \frac{ \Pr[A_{S_i}\cap A_{[k]}]}{\Pr[A_{[k]}]^2},
		\]
		where $S_i$ is any subset of $[n]$ of size $k$ that intersects $[k]$ in exactly $i$ elements.
	\end{lemma}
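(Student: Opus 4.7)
The plan is to expand $X_k$ as a sum of clique indicators and exploit the exchangeability of the vertex labels in $\mathbb{G}(n,W)$ to reduce the double sum defining $\Ex[X_k^2]$ to a single sum indexed by the intersection size. Concretely, I would write $X_k = \sum_{S\in\binom{[n]}{k}} \mathbf{1}_{A_S}$. Since the vertex labels $x_1,\dots,x_n$ are sampled i.i.d.\ and $W$ is symmetric, the joint distribution of the edge variables is invariant under relabeling, so $\Pr[A_S]$ depends only on $|S|=k$. Linearity of expectation then gives $\Ex[X_k] = \binom{n}{k}\Pr[A_{[k]}]$.

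For the second moment, squaring and again applying linearity yields $\Ex[X_k^2] = \sum_{S,T\in\binom{[n]}{k}} \Pr[A_S\cap A_T]$. The same exchangeability shows that $\Pr[A_S\cap A_T]$ depends only on the triple $(|S|,|T|,|S\cap T|)$, so I would partition the pairs $(S,T)$ by $i := |S\cap T|\in\{0,1,\dots,k\}$. Counting: there are $\binom{n}{k}$ choices for $S$, and for each fixed $S$ there are exactly $\binom{k}{i}\binom{n-k}{k-i}$ sets $T$ meeting $S$ in $i$ elements (choose the $i$ common vertices from $S$, then the remaining $k-i$ vertices from $[n]\setminus S$). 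Collecting terms,
\[
\Ex[X_k^2] \;=\; \binom{n}{k}\sum_{i=0}^{k}\binom{k}{i}\binom{n-k}{k-i}\,\Pr[A_{[k]}\cap A_{S_i}],
\]
where $S_i$ is any fixed $k$-subset intersecting $[k]$ in exactly $i$ elements. Dividing by $\Ex[X_k]^2 = \binom{n}{k}^2\Pr[A_{[k]}]^2$ and cancelling one factor of $\binom{n}{k}$ gives the claimed identity.

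This is a routine calculation in the style of Sections 4.3 and 4.5 of \cite{AlSp04}, and I do not anticipate any real obstacle. The only substantive point is the permutation-invariance of the probabilities $\Pr[A_S\cap A_T]$, which is immediate from the i.i.d.\ sampling of the $x_i$ together with the symmetry of $W$; everything else is bookkeeping.
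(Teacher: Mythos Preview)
Your proposal is correct and follows essentially the same approach as the paper: both expand $X_k$ as a sum of clique indicators, use exchangeability to reduce $\Pr[A_S\cap A_T]$ to a function of $|S\cap T|$, count pairs $(S,T)$ with a given intersection size as $\binom{n}{k}\binom{k}{i}\binom{n-k}{k-i}$, and divide by $\Ex[X_k]^2=\binom{n}{k}^2\Pr[A_{[k]}]^2$. If anything, your write-up is slightly more explicit about why the exchangeability holds.
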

	
	\begin{proof}
		This lemma follows from a direct computation of the first and second moments; first, write $$X_k = \displaystyle\sum_{S\subseteq [n],\, |S| = k} I_S,$$
		where $I_S$ is the indicator variable for the vertices in $S$ forming a clique. With this notation, we obtain
		\begin{align}\label{eqn:FirstMom}
					\Ex[X_k] = \sum_{S\subseteq [n],\, |S| = k} \Ex[I_S] = \binom{n}{k}\Pr[A_{[k]}]. 
		\end{align}
		Similarly, 
		\begin{align}\label{eqn:SecondMom}
			\Ex[X_k^2] = \sum_{\substack{S, T\in [n]\\\ |S|, |T| = k}} \Ex\left[I_SI_T\right] = \sum_{\substack{S, T\in [n]\\\ |S|, |T| = k}}\Pr[A_S\cap A_T].
		\end{align}
		And notice that this last probability depends only on the size of the intersection of $S$ and $T$; thus we can group the terms of the sum above by the size $i$ of the intersection. The number of ways to choose two sets of $k$ vertices that overlap in exactly $i$ elements is $\binom{n}{k}\binom{k}{i}\binom{n-k}{k-i}$; so [\ref{eqn:SecondMom}] becomes
		\begin{align}\label{SecondMom2}
			\Ex[X_k^2] = \sum_{i=0}^k\binom{n}{k}\binom{k}{i}\binom{n-k}{k-i}\Pr[A_{[k]}\cap A_{S_i}].
		\end{align}
		And combining [\ref{eqn:FirstMom}] and [\ref{eqn:SecondMom}], we see that
		\[
		\frac{\Ex[X_k^2]}{\Ex[X_k]^2} = \sum_{i=0}^{k} \frac{\binom{k}{i} \binom{n-k}{k-i}}{\binom{n}{k}}\cdot \frac{ \Pr[A_{S_i}\cap A_{[k]}]}{\Pr[A_{[k]}]^2},
		\]
		as desired.
	\end{proof}

	Now, in order to apply these results to any graphon $W$, we need to compute the sum given in the lemma above, and in particular, $\frac{\Pr[A_{S_i}\cap A_{[k]}]}{\Pr[A_{[k]}]^2}$. For $W$ of the right form, we can obtain a more explicit expression:
	
	\begin{lemma}\label{lem:SepGraphonSndMom}
		For any graphon $W$ of the form $W(x,y) = f(x)f(y)$, i.e., for any $W$ that is rank-1,
		\[
		\frac{\Pr[A_{S_i}\cap A_{[k]}]}{\Pr[A_{[k]}]^2} = \left(\int_0^1 f(x)^{k-1}dx\right)^{-2i}\cdot \left(\int_0^1 f(x)^{2k-i-1}dx\right)^{i},
		\]
		where $A_S$ is the event that the elements of $S$ form a clique in $\mathbb{G}(n,W)$, and $S_i$ is any subset of $[n]$ of size $k$ that intersects $[k]$ in exactly $i$ elements.
	\end{lemma}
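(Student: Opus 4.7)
The plan is to compute the numerator and denominator separately using the rank-1 structure, which decouples the integrand into a product of per-vertex factors. For any fixed sample $(x_1,\dots,x_n)$ of vertex values, the event $A_T$ for a set $T\subseteq[n]$ holds with conditional probability $\prod_{\{u,v\}\in\binom{T}{2}} W(x_u,x_v)$, and the unconditional probability is obtained by integrating over the independent uniform coordinates $x_v$ for $v\in T$. When $W(x,y)=f(x)f(y)$, this product factors as $\prod_{v\in T} f(x_v)^{d_T(v)}$, where $d_T(v)$ is the degree of $v$ in the complete graph on $T$.

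First I would handle the denominator. Each vertex of $[k]$ has degree $k-1$ in $K_{[k]}$, and the coordinates $x_1,\dots,x_k$ are i.i.d.\ uniform on $[0,1]$, so Fubini gives
\[
\Pr[A_{[k]}] \;=\; \int_{[0,1]^k} \prod_{\ell=1}^{k} f(x_\ell)^{k-1}\, d\vec{x} \;=\; \left(\int_0^1 f(x)^{k-1}\,dx\right)^{k}.
\]
For the numerator, I would classify the $2k-i$ vertices in $S_i\cup[k]$ into three types: the $i$ shared vertices in $S_i\cap[k]$, the $k-i$ vertices only in $[k]$, and the $k-i$ vertices only in $S_i$. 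The event $A_{S_i}\cap A_{[k]}$ requires every edge of $K_{[k]}\cup K_{S_i}$ to be present. In this union graph, a shared vertex is adjacent to all other $i-1$ shared vertices, all $k-i$ vertices only in $[k]$, and all $k-i$ vertices only in $S_i$, giving degree $2k-i-1$; a non-shared vertex has degree $k-1$ since it sees only the other members of its own clique. The corresponding integrand therefore factors, and integrating over the $2k-i$ independent uniform coordinates yields
\[
\Pr[A_{S_i}\cap A_{[k]}] \;=\; \left(\int_0^1 f(x)^{2k-i-1}\,dx\right)^{i}\cdot\left(\int_0^1 f(x)^{k-1}\,dx\right)^{2(k-i)}.
\]

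Taking the ratio, the factor $\bigl(\int f^{k-1}\bigr)^{2(k-i)}$ from the numerator cancels against part of $\Pr[A_{[k]}]^2 = \bigl(\int f^{k-1}\bigr)^{2k}$, leaving
\[
\frac{\Pr[A_{S_i}\cap A_{[k]}]}{\Pr[A_{[k]}]^2} \;=\; \left(\int_0^1 f(x)^{k-1}\,dx\right)^{-2i}\cdot\left(\int_0^1 f(x)^{2k-i-1}\,dx\right)^{i},
\]
which is the claimed identity. There is essentially no obstacle here beyond bookkeeping; the only step requiring a moment of care is the degree count in the union $K_{[k]}\cup K_{S_i}$, where one must avoid double-counting shared edges when totaling each vertex's degree. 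Everything else is a direct appeal to independence of the $x_v$ and Fubini.
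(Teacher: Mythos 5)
Your proof is correct and follows essentially the same route as the paper: factor the conditional clique probabilities into per-vertex powers $f(x_v)^{d(v)}$ using the rank-1 structure, with degree $2k-i-1$ for the $i$ shared vertices and $k-1$ for the rest, then integrate over the independent uniform coordinates and cancel $\bigl(\int_0^1 f(x)^{k-1}\,dx\bigr)^{2(k-i)}$ in the ratio. The degree bookkeeping in $K_{[k]}\cup K_{S_i}$, which you flag as the only delicate point, matches the paper's computation exactly.
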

	
	\begin{proof}
		We begin by computing $\Pr[A_{S_i}\cap A_{[k]}]$, considering in three parts the edges of the graph consisting of a clique on $[k]$ and a clique on $S_i$. This is equal to
		\begin{align*}
		\Pr[A_{S_i}\cap A_{[k]}]
		&=\int_{[0,1]^{2k-i}}\Bigg(\prod_{\substack{\ell\neq m\in S\setminus (S\cap [k])\\\text{ or } [k]\setminus (S\cap [k])}}f(x_\ell)f(x_m)\Bigg)\cdot\Bigg(\prod_{\ell\neq m\in  S\cap[k]}f(x_\ell)f(x_m)\Bigg)\\
		&\ \ \ \ \ \ \ \ \ \ \ \ \ \ \cdot\Bigg(\prod_{\substack{\ell\in S\cap[k],\\ m \in (S\cup [k])\setminus (S\cap [k])}}f(x_\ell)f(x_m)\Bigg)d\vec{x}\\
		&=\int_{[0,1]^{2k-i}}\Bigg(\prod_{\ell\in (S\cup [k])\setminus (S\cap [k])}f(x_\ell)^{k-1}\Bigg)\cdot \Bigg(\prod_{\ell\in S\cap [k]}f(x_\ell)^{2k-i-1}\Bigg)d\vec{x}\\
		&=\left(\int_0^1 f(x)^{k-1}\,dx\right)^{2k-2i}\cdot \left(\int_0^1 f(x)^{2k-i-1}\,dx\right)^{i}
		\end{align*}
		Without any further assumptions on $f(x)$, this is as far as $\Pr[A_{S_i}\cap A_{[k]}]$ can be evaluated. To finish off, we compute
		\begin{align*}
		\Pr[A_{[k]}]& = \int_{[0,1]^{k}}\prod_{\ell\neq m\in[k]}f(x_\ell)f(x_m)\,d\vec{x}\\
		& = \left(\int_0^1 f(x)^{k-1}\,dx\right)^k.
		\end{align*}
		Therefore
		\begin{align*}
		\frac{\Pr[A_{S_i}\cap A_{[k]}]}{\Pr[A_{[k]}]^2}
		&=\left(\int_0^1 f(x)^{k-1}dx\right)^{-2i}\cdot \left(\int_0^1 f(x)^{2k-i-1}dx\right)^{i}.
		\end{align*}
	\end{proof}
	
	For the graphons $W_r$ and $U_r$, we can evaluate the integrals above and obtain more explicit expressions:
	
	\begin{lemma}\label{lem:SndMomSqrtAndPoly}
		Given any $k = \omega(1)$ and $1\leq i\leq k-1$,
		\begin{enumerate}[(i)]
			\item\label{lem:SndMomSqrt} for the graphon $W_r$,
			\[  
			\frac{\Pr[A_{S_i}\cap A_{[k]}]}{\Pr[A_{[k]}]^2}  = \left(\Theta(k)\right)^i,
			\]
			\item\label{lem:SndMomPoly} and for the graphon $U_r$,
			\[  
			\frac{\Pr[A_{S_i}\cap A_{[k]}]}{\Pr[A_{[k]}]^2}  = \left(\Theta(k^{1/r})\right)^i.
			\]
		\end{enumerate}
	\end{lemma}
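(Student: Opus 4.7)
The plan is to apply Lemma~\ref{lem:SepGraphonSndMom} to $W_r$ and $U_r$, both of which are rank-1: we have $W_r(x,y) = f(x)f(y)$ with $f(x) = (1-x)^r$, and $U_r(x,y) = g(x)g(y)$ with $g(x) = 1-x^r$. The lemma reduces everything to evaluating integrals of the form $\int_0^1 h(x)^m\,dx$ for $m \in \{k-1,\, 2k-i-1\}$, and then tracking how the exponents $-2i$ and $i$ combine. The key structural observation, used identically in both parts, is that $2k-i-1 \in [k,\, 2k-2]$ throughout the range $1 \leq i \leq k-1$, so the integrals with $m = 2k-i-1$ have the same order of magnitude as those with $m = k-1$, uniformly in $i$.

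For part~(\ref{lem:SndMomSqrt}), the integral is elementary: $\int_0^1 (1-x)^{rm}\,dx = (rm+1)^{-1}$, so Lemma~\ref{lem:SepGraphonSndMom} gives
\[
\frac{\Pr[A_{S_i}\cap A_{[k]}]}{\Pr[A_{[k]}]^2} \;=\; \left(\frac{(r(k-1)+1)^2}{r(2k-i-1)+1}\right)^{i}.
\]
Since $k=\omega(1)$, both $r(k-1)+1$ and $r(2k-i-1)+1$ are $\Theta(k)$ with absolute (i.e., $i$-independent) constants, so the quantity inside the parentheses is $\Theta(k)$, and raising to the $i$-th power yields $\Theta(k)^i$ as claimed.

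For part~(\ref{lem:SndMomPoly}), I would use the substitution $u = x^r$ already performed in the proof of Lemma~\ref{lem:PolyUpperBd} to write
\[
\int_0^1 (1-x^r)^m\,dx \;=\; \Gamma\!\left(1+\tfrac{1}{r}\right) \cdot \frac{\Gamma(m+1)}{\Gamma(m+1+\tfrac{1}{r})},
\]
together with Stirling's formula (again exactly as in Lemma~\ref{lem:PolyUpperBd}) to get $\Gamma(m+1)/\Gamma(m+1+\tfrac{1}{r}) = (1+o(1)) \cdot m^{-1/r}$ as $m \to \infty$. Substituting $m = k-1$ and $m = 2k-i-1$ into Lemma~\ref{lem:SepGraphonSndMom}, the $\Gamma(1+1/r)$ factors combine into an $i$-independent constant raised to the $i$-th power, and the power-of-$m$ pieces combine into $((k-1)^2/(2k-i-1))^{i/r}$. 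As in part~(\ref{lem:SndMomSqrt}), this inner quantity is $\Theta(k)$ uniformly in $i$, so the whole expression is $\Theta(k^{1/r})^i$.

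The only real obstacle is a bookkeeping one: to legitimately write $\Theta(k)^i$ and $\Theta(k^{1/r})^i$ rather than some weaker bound, one must ensure that the constants hidden in $\Theta(k)$ do not depend on $i$, so that they survive being raised to the $i$-th power. This reduces to the uniform estimate on $2k-i-1$ noted above, plus, in part~(\ref{lem:SndMomPoly}), the fact that the Stirling asymptotic holds uniformly once $m\to\infty$; both hold immediately in the stated range $k=\omega(1)$, $1 \leq i \leq k-1$.
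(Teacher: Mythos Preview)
Your proposal is correct and follows essentially the same approach as the paper: apply Lemma~\ref{lem:SepGraphonSndMom}, evaluate the resulting integrals (directly for $W_r$, via the Beta function and Stirling for $U_r$), and simplify. Your explicit attention to the uniformity in $i$ of the $\Theta$-constants is a welcome addition that the paper's proof leaves implicit.
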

	\begin{proof}
		We begin with (\ref{lem:SndMomSqrt}). For the graphon $W_r(x,y) = (1-x)^r(1-y)^r$, by Lemma~\ref{lem:SepGraphonSndMom}, 
		\begin{align*}
		\frac{\Pr[A_{S_i}\cap A_{[k]}]}{\Pr[A_{[k]}]^2} &=\left(\int_0^1 (1-x)^{r(k-1)}dx\right)^{-2i}\cdot \left(\int_0^1 (1-x)^{r(2k-i-1)}dx\right)^{i}\\
		&=    \left(\frac{1}{r(k-1)+1}\right)^{-2i}\left(\frac{1}{r(2k-i-1)+1}\right)^{i}\\
		& = (\Theta(k))^i.
		\end{align*}
		Now we prove (\ref{lem:SndMomPoly}). Again by Lemma~\ref{lem:SepGraphonSndMom}, for $U_r(x,y) = (1-x^r)(1-y^r)$, we have
		\begin{align*}
		\frac{\Pr[A_{S_i}\cap A_{[k]}]}{\Pr[A_{[k]}]^2} &=\left(\int_0^1 (1-x^r)^{k-1}dx\right)^{-2i}\cdot \left(\int_0^1 (1-x^r)^{2k-i-1}dx\right)^{i}.
		\end{align*}
		And as computed in the proof of Lemma~\ref{lem:PolyUpperBd},
		\[
		\int_0^1 (1-x^r)^{k-1}dx = \frac{\Gamma(k)\cdot \Gamma(1+\frac{1}{r})}{\Gamma(k+\frac{1}{r})}.
		\]
		Applying this to the expression above, we obtain
		\begin{align}
		\frac{\Pr[A_{S_i}\cap A_{[k]}]}{\Pr[A_{[k]}]^2} &=
		\left(\frac{\Gamma(2k-i)\Gamma(1+\frac{1}{r})}{\Gamma(2k-i+\frac{1}{r})}\right)^{i}
		\left(\frac{\Gamma(k)\Gamma(1+\frac{1}{r})}{\Gamma(k+\frac{1}{r})}\right)^{-2i}. \label{eqn:ConditionalGammas}
		\end{align}
		Using the approximation $\frac{\Gamma(k)}{\Gamma(k+\frac{1}{r})} =k^{-\frac{1}{r}}(1+o(1))$ obtained from Stirling's formula, (\ref{eqn:ConditionalGammas}) becomes
		\begin{align*}
		\frac{\Pr[A_{S_i}\cap A_{[k]}]}{\Pr[A_{[k]}]^2} = &
		\left((2k-i)^{-\frac{1}{r}}\cdot \Gamma(1+\tfrac{1}{r})(1+o(1))\right)^{i}
		\left(k^{-\frac{1}{r}}\cdot \Gamma(1+\tfrac{1}{r})(1+o(1))\right)^{-2i}\\
		=&
		\left(\frac{k^{2/r}}{\Gamma(1+\frac{1}{r})(2k-i)^{1/r}}(1+o(1))\right)^i\\
		=&
		\left(\Theta(k^{1/r})\right)^i.\qedhere
		\end{align*}
	\end{proof}
	
	We are now nearly ready to show that for the graphons $W_r$ and $U_r$, and for any reasonably large $k$, the number of $k$-cliques in $\mathbb{G}(n,W)$ has large variance.
	
	\begin{theorem}\label{thm:GeneralBigVar}
		For any $r>0$ and any graphon $W$, if $$\frac{\Pr[A_{S_i}\cap A_{[k]}]}{\Pr[A_{[k]}]^2} = \left(\Omega(k^{1/r})\right)^i,$$
		then for any $k = \Theta\left(n^{\frac{r}{r+1}}\right)$, we have
		$\Var(X_k) = \omega(\Ex[X_k]^2).$
	\end{theorem}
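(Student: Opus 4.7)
The plan is to start from the identity in Lemma~\ref{lem:VarBound}, apply the hypothesis to obtain a lower bound involving the moment generating function of a hypergeometric random variable, and then extract a single coarse one-term bound that sidesteps delicate Stirling analysis.

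First, combining Lemma~\ref{lem:VarBound} with the hypothesis, there exists a constant $c>0$ such that for all sufficiently large $k$,
\[
\frac{\Ex[X_k^2]}{\Ex[X_k]^2} \;\geq\; \sum_{i=0}^{k}\frac{\binom{k}{i}\binom{n-k}{k-i}}{\binom{n}{k}}\bigl(ck^{1/r}\bigr)^{i} \;=\; \Ex\bigl[(ck^{1/r})^{I}\bigr],
\]
where $I\sim\mathrm{Hypergeometric}(n,k,k)$ is the overlap of two uniformly random $k$-subsets of $[n]$. Rather than locating the saddle-point term of this sum via Stirling (which would require splitting into cases according to whether $k^2/n$ tends to $0$, a positive constant, or infinity), I would use the crude bound $(ck^{1/r})^I\geq ck^{1/r}\cdot\mathbf{1}[I\geq 1]$, valid once $ck^{1/r}\geq 1$. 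This gives $\Ex[X_k^2]/\Ex[X_k]^2\geq ck^{1/r}\cdot\Pr[I\geq 1]$.

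Second, I would estimate $\Pr[I\geq 1]=1-\binom{n-k}{k}/\binom{n}{k}$ by expanding the ratio as $\prod_{j=0}^{k-1}(1-k/(n-j))\leq (1-k/n)^{k}\leq e^{-k^{2}/n}$, and then applying the elementary inequality $1-e^{-x}\geq(1-1/e)\min(x,1)$ to conclude $\Pr[I\geq 1]\geq(1-1/e)\min(k^{2}/n,\,1)$. Substituting $k=\Theta(n^{r/(r+1)})$ and using the key identity $k^{1+1/r}=\Theta(n)$, a one-line calculation gives $k^{2+1/r}/n=\Theta(k)$; hence the product $ck^{1/r}\cdot\min(k^{2}/n,1)$ is $\Omega(k^{1/r})$ when $k^{2}\geq n$ and $\Omega(k^{2+1/r}/n)=\Omega(k)$ when $k^{2}<n$, tending to infinity in both cases. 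This yields $\Ex[X_{k}^{2}]/\Ex[X_{k}]^{2}=\omega(1)$, which is equivalent to $\Var(X_{k})=\omega(\Ex[X_{k}]^{2})$.

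The step I expect to be most delicate is verifying that the coarse bound $ck^{1/r}\cdot\Pr[I\geq 1]$ does not throw away too much; since it ignores all the mass at $I\geq 2$, one might worry that it is too weak. The case split in the previous paragraph confirms it is not: $k=\Theta(n^{r/(r+1)})$ is precisely the scale at which $ck^{1+1/r}/n$ is of constant order, and this balance is exactly what forces $k^{1/r}\cdot\min(k^{2}/n,1)$ to diverge uniformly across all $r>0$.
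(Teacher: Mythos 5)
Your proposal is correct, and it reaches the conclusion by a genuinely different route than the paper. The paper also starts from the identity of Lemma~\ref{lem:VarBound}, but instead of bounding the whole sum it isolates the single term $i=\eps k$ for a small constant $\eps>0$, estimates $\binom{n}{k}$, $\binom{n-k}{k-i}$, $\binom{k}{i}$ via Stirling-type bounds, uses $k^{1/r}=\Theta(n/k)$ (the same identity $k^{1+1/r}=\Theta(n)$ that you use), and arrives at a lower bound of the form $e^{\eps k(\log\frac{1}{\eps}-C)-o(k)}$ for that one term, finally choosing $\eps$ small relative to the constant $C$ coming from the implicit constants in the hypotheses. Your hypergeometric reformulation, with the truncation $(ck^{1/r})^{I}\geq ck^{1/r}\mathbf{1}[I\geq 1]$ and the elementary estimate $\Pr[I=0]=\binom{n-k}{k}/\binom{n}{k}\leq e^{-k^{2}/n}$, avoids Stirling and the $\eps$-versus-$C$ tuning altogether, and is in fact more robust: it yields $\Ex[X_k^2]/\Ex[X_k]^2=\Omega\big(\min(k^{1/r},\,k^{2+1/r}/n)\big)$, which is $\omega(1)$ whenever $k=\omega(n^{r/(2r+1)})$, not only for $k=\Theta(n^{r/(r+1)})$. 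What the paper's heavier computation buys is a stronger quantitative conclusion: keeping the term $i=\eps k$ shows the normalized second moment is $e^{\Omega(k)}$, whereas your truncation at $I\geq 1$ gives only a polynomially large bound; both of course suffice for $\Var(X_k)=\omega(\Ex[X_k]^2)$, since $\Var(X_k)/\Ex[X_k]^2=\Ex[X_k^2]/\Ex[X_k]^2-1$. Two small housekeeping remarks: the hypothesis should be read as $\Pr[A_{S_i}\cap A_{[k]}]\geq (ck^{1/r})^{i}\Pr[A_{[k]}]^2$ with a single constant $c$ uniform in $i$ (this is what Lemma~\ref{lem:SndMomSqrtAndPoly} supplies, for $1\leq i\leq k-1$); if you prefer not to invoke it at $i=k$, replace $\Pr[I\geq 1]$ by $\Pr[1\leq I\leq k-1]$, which costs only the negligible $\Pr[I=k]=1/\binom{n}{k}$ and leaves your case analysis unchanged.
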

	
	Before proving the theorem, note that together with Lemma~\ref{lem:SndMomSqrtAndPoly}, it directly implies the following corollary.
	
	\begin{corollary}\label{cor:BigVarSqrtAndPoly}
		Given $r>0$,
		\begin{enumerate}[(i)]
			\item for any $k=\Theta(\sqrt{n})$, if $X_k$ is the number of $k$-cliques in $\mathbb{G}(n,W_r)$, then $\Var(X_k)=\omega(\mathbb{E}[X_k]^2)$, and \label{cor:BigVarSqrt}
			\item for any $k=\Theta\left(n^{\frac{r}{r+1}}\right)$, if $X_k$ is the number of $k$-cliques in $\mathbb{G}(n,U_r)$, then $\Var(X_k)=\omega(\mathbb{E}[X_k]^2)$. \label{cor:BigVarPoly}
		\end{enumerate}
	\end{corollary}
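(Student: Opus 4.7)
The plan is to derive both parts of Corollary~\ref{cor:BigVarSqrtAndPoly} by a direct application of Theorem~\ref{thm:GeneralBigVar}, with the theorem's hypothesis on the ratio $\Pr[A_{S_i}\cap A_{[k]}]/\Pr[A_{[k]}]^2$ verified via Lemma~\ref{lem:SndMomSqrtAndPoly}. The only subtle point is that the symbol $r$ plays two distinct roles: it is both the subscript of the graphons $W_r$ and $U_r$ whose clique variance we wish to bound, and a free parameter of Theorem~\ref{thm:GeneralBigVar} controlling the target clique size $\Theta(n^{r/(r+1)})$. The two values coincide in part~(\ref{cor:BigVarPoly}) but not in part~(\ref{cor:BigVarSqrt}).

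For part~(\ref{cor:BigVarSqrt}), Lemma~\ref{lem:SndMomSqrtAndPoly}(\ref{lem:SndMomSqrt}) yields, for the graphon $W_r$, the estimate
\[
\frac{\Pr[A_{S_i}\cap A_{[k]}]}{\Pr[A_{[k]}]^2} = (\Theta(k))^i = (\Omega(k^{1/1}))^i.
\]
Invoking Theorem~\ref{thm:GeneralBigVar} with its parameter set to $1$ (\emph{regardless} of the subscript $r$ of $W_r$) then gives $\Var(X_k) = \omega(\Ex[X_k]^2)$ for every $k = \Theta(n^{1/(1+1)}) = \Theta(\sqrt n)$, which is precisely (\ref{cor:BigVarSqrt}).

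For part~(\ref{cor:BigVarPoly}), Lemma~\ref{lem:SndMomSqrtAndPoly}(\ref{lem:SndMomPoly}) supplies, for the graphon $U_r$,
\[
\frac{\Pr[A_{S_i}\cap A_{[k]}]}{\Pr[A_{[k]}]^2} = (\Theta(k^{1/r}))^i = (\Omega(k^{1/r}))^i,
\]
and this time the hypothesis of Theorem~\ref{thm:GeneralBigVar} is satisfied with its free parameter equal to the same $r$. The theorem then produces $\Var(X_k) = \omega(\Ex[X_k]^2)$ for every $k = \Theta(n^{r/(r+1)})$, establishing (\ref{cor:BigVarPoly}).

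There is no genuine obstacle in this deduction; once Theorem~\ref{thm:GeneralBigVar} is granted, the corollary reduces to matching notation. Consequently the hard part is not the corollary itself but the theorem that feeds it, whose proof will presumably substitute the lower bound $(\Omega(k^{1/r}))^i$ into the identity from Lemma~\ref{lem:VarBound} and then estimate the hypergeometric coefficients $\binom{k}{i}\binom{n-k}{k-i}/\binom{n}{k}$ to exhibit at least one index $i$ (or a range of indices) whose contribution alone is $\omega(1)$ when $k=\Theta(n^{r/(r+1)})$.
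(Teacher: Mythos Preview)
Your proposal is correct and matches the paper's approach exactly: the paper states that the corollary follows directly from combining Theorem~\ref{thm:GeneralBigVar} with Lemma~\ref{lem:SndMomSqrtAndPoly}, and gives no further argument. Your explicit observation that the parameter in Theorem~\ref{thm:GeneralBigVar} must be taken as $1$ (not the graphon subscript $r$) in part~(\ref{cor:BigVarSqrt}) is a helpful clarification that the paper leaves implicit.
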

	Now we prove the theorem.
	\begin{proof}[Proof of Theorem~\ref{thm:GeneralBigVar}]
		We will apply Lemma~\ref{lem:VarBound} to show that ${\Ex[X_k^2]}/{\Ex[X_k]^2} = \omega(1)$, or equivalently, $\Var(X_k) = \omega(\Ex[X_k]^2)$. Recall that, by Lemma~\ref{lem:VarBound} and by hypothesis,
		\[
		\frac{\Ex[X_k^2]}{\Ex[X_k]^2} = \sum_{i=1}^{k-1} \frac{\binom{k}{i} \binom{n-k}{k-i}}{\binom{n}{k}}\cdot \left(\Omega(k^{1/r})\right)^i.
		\]
		We will show not only that this sum is $\omega(1)$, but in fact, that it always contains a term that is $\omega(1)$. This comes down almost entirely to appropriately estimating the three binomial coefficients appearing in the $i^\text{th}$ term of the sum above. First, for any $k$ that is $\omega(1)$ but sublinear, 
		\begin{equation}\binom{n}{k}
		=\left(\frac{ne}{k}\right)^ke^{-o(k)}\label{eqn:FirstBinomEst}.
		\end{equation}
		Next, observe that for all $0\leq i\leq k$, since $k=o(n)$, we also have $k-i = o(n-k)$. If $i=\eps k$ for some constant $0<\eps<1$, then $(k-i) = \omega(1)$ as well, and we obtain
		\begin{align}
		\binom{n-k}{k-i} &= \left(\frac{(n-k)e}{k-i}\right)^{k-i}e^{-o(k-i)}\notag\\
		&\geq\left(\frac{ne}{k}\right)^{(1-\eps)k}e^{-o(k)}\label{eqn:SecondBinomEst}.
		\end{align}
		We also have
		\begin{equation}\label{eqn:ThirdBinomEst}
		\binom{k}{i} \geq \left(\frac{k}{i}\right)^i = e^{\eps k\log \frac{1}{\eps}}.
		\end{equation}
		Together, (\ref{eqn:FirstBinomEst}), (\ref{eqn:SecondBinomEst}), and (\ref{eqn:ThirdBinomEst}) imply that for $i=\eps k = \Theta(k)$, the $i^\text{th}$ term of the sum above is
		\begin{align*}
		\frac{\binom{k}{i} \binom{n-k}{k-i}}{\binom{n}{k}}\cdot \left(\Omega(k^{1/r})\right)^i
		& \geq \frac{e^{\eps k\log \frac{1}{\eps}}\cdot \left(\frac{ne}{k}\right)^{(1-\eps)k}e^{-o(k)}}{\left(\frac{ne}{k}\right)^ke^{-o(k)}}\cdot \left(\Theta(k^{1/r})\right)^i \\
		& = e^{\eps k\log \frac{1}{\eps}-o(k)} \left(\frac{ne}{k}\right)^{-\eps k}\cdot \left(\Theta\left(k^{1/r}\right)\right)^{\eps k} \\
		&=e^{\eps k\log \frac{1}{\eps}-o(k)} \left(\frac{ne}{k}\right)^{-\eps k}\cdot \left(\Theta\left(\frac{n}{k}\right)\right)^{\eps k}, && \text{since }k = \Theta\left(n^{\frac{r}{r+1}}\right) \\
		&=e^{\eps k(\log \frac{1}{\eps}-C) -o(k)}
		\end{align*}
		for some constant $C$.
		Note that we can make $C$ as large as we want by controlling the size of the implicit constant in $k=\Theta(n^\frac{r}{r+1})$. However, for any fixed choice of $C$, we can find some small but constant $\varepsilon = \varepsilon(C)$ such that $\log(1/\varepsilon)>\log(C)$. So for some $\varepsilon$, this expression will always be $\omega(1)$. Therefore ${\Ex[X_k^2]}/{\Ex[X_k]^2} = \omega(1)$, or equivalently  $\Var(X_k) = \omega(\Ex[X_k]^2)$, as desired.
	\end{proof}

	\section*{Acknowledgments}
	
	The author would like to thank Nike Sun, Remco van der Hofstad, and Yufei Zhao for helpful discussions, and Henry Cohn for his many helpful suggestions and for his unwavering support and encouragement.
	
	\bibliographystyle{amsplain}
	\bibliography{GraphonClique}
	
\end{document}